\theoremstyle{plain}
\newtheorem{theorem}{Theorem}[section]
\newtheorem{lemma}{Lemma}[section]
\newtheorem{corollary}{Corollary}[section]
\newtheorem{proposition}{Proposition}[section]
\theoremstyle{definition}
\newtheorem{definition}{Definition}[section]
\newtheorem{remark}{Remark}[section]
\newtheorem{example}{Example}[section]
\begin{document}

\title[Boundary behavior of arbitrary and meromorphic functions]
{Normality and boundary behavior of arbitrary and  meromorphic functions along
simple curves and applications}

\author{\v{Z}arko Pavi\'{c}evi\'{c}}
\address{Faculty of Natural Sciences and Mathematics, University of Montenegro,
Cetinjski put b.b. 81000 Podgorica, Montenegro}
\email{zarkop@ac.me}

\author{Marijan Markovi\'{c}}
\address{Faculty of Natural Sciences and Mathematics, University of Montenegro,
Cetinjski put b.b. 81000 Podgorica, Montenegro}
\email{marijanmmarkovic@gmail.com}

\footnote{2010 \emph{Mathematics        Subject Classification}: Primary 30D40;
Secondary 51K99}

\keywords{normal function, normal family of functions,       angular limits of
analytic functions, simple curves, hyperbolic distance,  Fr\'{e}shet distance}

\begin{abstract}
We  establish the  theorems that give necessary and  sufficient conditions for
an arbitrary function defined in the unit disk of complex plane in order    to
has boundary  values along  classes of equivalencies  of simple  curves.   Our
results       generalize the  well--known theorems on asymptotic   and angular
boundary behavior of     meromorphic  functions (Lind\"{o}lf,  Lehto--Virtanen,
and Seidel--Walsh type theorems).      The results are applied to the study of
boundary behavior  of  meromorphic functions along curves using  $P-$sequences,
as well as in the  proof  of the  uniqueness theorem similar to \v{S}aginjan's
one.      Constructed  examples of  functions  show that the results cannot be
improved.
\end{abstract}

\maketitle

\tableofcontents

\section{Introduction}
In  this   paper  we  study some  problems of the Theory of cluster sets,   a
theory  which   is  developed in the second half of the twentieth century. It
is  believed  that   the      first result  of  this theory were  obtained by
Sohotsky      \cite{SOKHOTSKY}, and independently by Cazorati \cite{CASORATI}
in  1868th,   and Weierstrass  \cite{WEIERSTRASS} in 1876th, which   is known
in     literature   as the Theorem on   essential  singularity of    analytic
functions (see \cite{SHABAT.BOOK} p. 123).       Fundamentals   of the Theory
of     cluster   sets                   are      presented  in     monographs
\cite{COLLINGWOOD.LOHWATOR.BOOK}, \cite{NOSHIRO.BOOOK}, \cite{SOKHOTSKY}, and
in the more   recent   survey  paper \cite{LOHWATER}.

The main objects     of   research  in this paper is  the asymptotic behavior
of  meromorphic   functions along a simple curve  ending     in   a  boundary
point   of       the domain of functions.            We emphasize that a very
productive area  of  investigation   are domains of the hyperbolic type, i.e.,
domains  where one  may define the hyperbolic metric.

One of the classical  results of The theory of  cluster  sets  related to the
asymptotic     behavior  is  the theorem of Lindel\"{o}f on angular  boundary
values   of analytic    functions \cite{LINDELOF} (or,    see \cite{VALIRON}).
Further interesting  results on   the boundary behavior of analytic functions
along  simple curves  were obtained by Seidel  \cite{SEIDEL.TAMS}  and Seidel
and Walsh    \cite{SEIDEL.WALSH.TAMS}      (see also \cite{LOHWATER}).  Lehto
and Virtanen's  result from  \cite{LEHTO.VIRTANEN.ACTA}, which is a  transfer
of  the     results  of Lindel\"{o}f and Seidel and   Walsh to   the class of
normal meromorphic   functions in the unit disc,  the class  usually  denoted
by $N$, prompted     a  further intensive  research in  the Theory of cluster
sets. These   investigations   were also  related to the boundary behavior of
functions  along sequences of  points on the one hand, and on  the   boundary
behavior of harmonic,           subharmonic, continuous functions, and normal
quasiconformal and equimorphic mappings along (non--)tangential simple curves
(see References).

While most of these papers concern the boundary      properties of functions
along simple   curves which are at the finite Fr\'{e}shet distance or finite
Hausdorff distance  (see eg. \cite{DEVYATKOV}),   in  this paper we define a
relation  of equivalence in  the  family   of all simple  curves in the unit
disc which  terminate in the same point on the boundary,  and  study     the
boundary  behavior of functions along classes of equivalence. We also  offer
an example of two simple curves ending in  a   point of the boundary of  the
unit   disc   which       belong to  the   same equivalence class, such that
their Fr\'{e}shet distance  is infinite. This the content of Lemma \ref{LE4}.
Thus,     our results in the paper are generalization of some  known results.
Namely,  using the mentioned relation of equivalence we prove   the  theorem
that give necessary and sufficient conditions for an      arbitrary function
defined in the unit disk to has a curvilinear boundary value    (see Theorem
\ref{TH1}).  This theorem is used in proof of     Theorem \ref{TH2},   which
shows that for an arbitrary function in the unit   disk holds an    analogue
of  Theorem 1 in \cite{LEHTO.VIRTANEN.ACTA}     concerning  the  meromorphic
functions.    As  follows   from our  Theorem \ref{TH1}, the normality along
simple curves is a    necessary  condition for the existence  of curvilinear
boundary value  of functions. In Section  5      we study the  normality and
boundary  behavior of    meromorphic  functions using the  $P-$sequences. We
emphasis         that the $P-$sequences  provide   necessary  and sufficient
conditions for meromorphic  functions to be normal (see  \cite{GAVRILOV.SBORNIK.67},
\cite{PAVICEVIC.NEW.ZELAND}, \cite{GAVRILOV.SBORNIK.71}, \cite{GAUTHIER.NAGOYA}).
Further, in Section 6 we prove theorems that give necessary and  sufficient
conditions in order  that a  meromorphic function in the  unit disk has   a
curvilinear boundary value (see Theorems \ref{TH14} and  \ref{TH15}). These
theorems are analogous to the theorems 2, 2', 4 and 5 in Lehto and Virtanen
work \cite{LEHTO.VIRTANEN.ACTA}. While Theorem 2, 2', 4 and 5  of Lehto and
Virtanen concern  the class  $N$ of normal meromorphic function in the unit
disk, the results of Theorems \ref{TH14}  and \ref{TH15} are related to the
class of normal  meromorphic    functions along a simple curve ending in  a
boundary point of the unit disc.   These classes  are wider than the  class
$N$; that will be showed  by Examples \ref{EX1} and \ref{EX2} in  Section 7.
Our results are applied in Section 8 in order  to derive Theorem \ref{TH16},
which shows  that the domain  along which there is a single boundary  value
of meromorphic functions in $N$ from  Theorems 2, 2', 4 and 5 in Lehto  and
Virtanen       work  \cite{LEHTO.VIRTANEN.ACTA} can spread in the  case  of
simple curves which are tangent to the boundary of the unit disc.   However,
one cannot obtain an extension by using the method of  Lehto  and Virtanen.
Finally,  our results are used to show the uniqueness theorem of \v{S}aginjan \cite{SAGINJAN}
which is related to the class  of   boundary  analytic
functions, and it's  generalization    to the class  $N$ that was    obtained
by Gavrilov \cite{GAVRILOV.SAGINJAN}. We prove a similar result for the class
of  meromorphic     functions       in    the unit disk that are normal along
non--tangential simple curves.

\section{Notations}
By $\mathbb{D}$ we will denote the open unit disk $\{z:|z|<1\}$ in the complex
plane $\mathbb{C}$ and   by   $\Gamma$  the  unit       circle   $\{z:|z|=1\}$.

Let
\begin{equation*}
d_{ph}(z,w)     = \left|\frac{z-w}{1-z\overline{w}}\right|\quad\text{and}\quad
d_{h}(z,w)=\log\frac{1+d_{ph}(z,w)}{1-d_{ph}(z,w)}
\end{equation*}
stands for the pseudo--hyperbolic distance and the hyperbolic distance between
$z,\,  w\in\mathbb{D}$, respectively. It is  well known that $d_h$ is a metric
in  the unit disc, and that $(\mathbb{D},d_h)$ is the Poincar\'{e} disc  model
for  the Lobachevsky geometry.
Furthermore, denote by
\begin{equation*}
d_S (z,w) = \left\{
\begin{array}{ll}
\frac{2|z-w|}{\sqrt{1+|z|^2}\sqrt{1+|w|^2}}, & \hbox{$z,\, w\in\mathbb{C}$},\\
\frac {2} {\sqrt{1+|z|^2}},              &\hbox{$z\in\mathbb{C},\, w=\infty$}.
\end{array}
\right.
\end{equation*}
the    spherical metric in the extended complex plane $\overline{\mathbb{C}}=
\mathbb{C}\cup\{\infty\}$ (Riemann sphere).

For $r>0$    we     denote   by ${D}(r)=\{|z|< r\}$ the standard open disc in
$\mathbb{C}$ with  centre in $0$ and radius $r$.     For $z\in\mathbb{D}$ let
$D_{h}(z,r)=\{w\in\mathbb{D}:D_{h}(z,w)<r\}$ be a     disc in the  hyperbolic
metric. Let $D_S(w,r),\, w\in\overline{\mathbb{C}}$ denote a disc on      the
Riemann sphere. For $r'\in (0,1)$ the  set $D_{ph}(z,r') = \{w\in \mathbb{D}:
d_{ph}(z,w)<r'\}$ stands  for the pseudo--hyperbolic  disc with centre in $z$
and    pseudo--hyperbolic  radius $r'$.   In  a similar manner one introduces
the closed  discs in these   metrics.   It  is  straightforward to show  that
\begin{equation}\label{EQ1}
\overline{D}_{h}(z,r')=\overline{D}_{ph}(z,r)\quad\text{with}\quad r\in[0,1),\,
r'=\log\frac{1+r}{1-r}\in[0,\infty).
\end{equation}

The group of all M\"{o}bius transforms of $\mathbb{D}$ onto itself (conformal
automorphisms of the unit disc) will be denoted by $\mathcal {M}$. A function
$f:\mathbb{D}\rightarrow\overline{\mathbb{C}}$ is normal  in  $\mathbb{D}$ if
the family $\{f\circ \varphi:\varphi\in\mathcal{M}\}$ is a normal   family in
the sense of Montel, i.e., if any sequence of this family  has a  subsequence
which is convergent in local topology of $\mathbb{D}$ (uniformly on   compact
subsets of $\mathbb{D}$). All     sequences of functions (or numbers) we mean
are convergent in above metrics (if they are convergent).    Particulary, the
uniform convergence on  compact   subsets  of      the disc $\mathbb{D}$ of a
sequence of functions  $\{f_n:\mathbb{D}\rightarrow\overline{\mathbb{C}}:n\in
\mathbb{N}\}$ to a function $f:\mathbb{D}\rightarrow\overline{\mathbb{C}}$ we
mean    in  the     metrics  of      spaces        $(\mathbb{D},d_{ph})$  and
$(\overline{\mathbb{C}},d_S)$,   or what is the same, in $(\mathbb{D},d_{h})$
and $(\overline{\mathbb{C}},d_S)$, as follows from \eqref{EQ1}.

For $w\in  \mathbb{D}$    let $\varphi_w\in\mathcal {M}$ be defined       by
\begin{equation*}
\varphi_w(z) = \frac{z+w}{1+z\overline{w}}.
\end{equation*}
If $f:  \mathbb{D}\rightarrow\overline{\mathbb{C}}$  is any function, we will
use the notation $f_w$ for $f\circ\varphi_w:\mathbb{D}\rightarrow   \overline
{\mathbb{C}}$, where $\varphi_w$  is defined above. In   the sequel  we  will
consider the following type of family of functions $\{f_n=f\circ \varphi_n\}$,
where $\varphi_n =  \varphi_{w_n}$ and $\{w_n\}$   is a sequence of points in
$\mathbb{D}$ such that $\lim_{n\rightarrow\infty} w_n= e^{i\theta}\in\Gamma$.

The set $C(f,A,e^{i\theta})  =\{w\in\overline{\mathbb{C}}:\text{there exist a
sequence}\, \{z_n\}\subseteq A,\, \lim_{n\rightarrow\infty}z_n=e^{i\theta}\in
\Gamma\,    \text{such that}\, \lim_{n\rightarrow\infty} f(z_n) =w\}$  is the
cluster  set for the function  $f:\mathbb{D}\rightarrow\overline{\mathbb{C}}$
in the  point $e^{i\theta}$   along the set $A$  whose closure in $\mathbb{D}
\cup\Gamma$     contains         $e^{i\theta}$.        It may be checked that
$C(f,A,e^{i\theta})$  is closed.

All curves which appear  in the text  we mean lie in $\mathbb{D}$, are simple
and terminate in a point $e^{i\theta}\in\Gamma$. Let $\gamma$   be a such one
curve. The  set
\begin{equation*}
\Delta_r  \gamma=\bigcup_{z\in\gamma}\overline{D}_{ph}(z,r),
\end{equation*}
where $r\in [0,1)$, is called  a curvilinear angle  along  the curve $\gamma$
with deflection $r$  and with vertex in $e^{i\theta}$. Particulary, for $r=0$
we have $\Delta_0\gamma = \gamma$. Regarding \eqref{EQ1}, we have
\begin{equation*}
\Delta_r \gamma =
\bigcup_{z\in\gamma}\overline{D}_{ph}(z,r)=\bigcup_{z\in\gamma}\overline{D}_{h}
\left(z,r'\right)\quad\text{for all}\quad r\in [0,1).
\end{equation*}
For the curvilinear angle $\Delta_r\gamma$ we will sometimes use the notation
$\Delta_{r'} \gamma$.  Although this is not fully precise, we believe  that a
misunderstanding  will not occur.

\begin{example}
If    $\gamma$ is  the radius  of the disc $\mathbb{D}$  with one endpoint in
$e^{i\theta}\in\Gamma$, then the curvilinear angle  along       $\gamma$ with
deflection $r$ is   the  domain   bounded by arcs of two hyper--cycle    with
endpoints in $e^{i\theta}$ and $e^{-i\theta}$ and by the arc of    the circle
$|z|=r$.    That     curvilinear  angle        we call a hyperbolic angle. If
$h(\theta,\alpha_1)$ and $h(\theta,\alpha_2),\, -\frac{\pi}2<\alpha_1<\alpha_2
<\frac\pi2$, are chords  of the disc  $\mathbb{D}$  which     with the radius
$r_\theta$  of $\mathbb{D}$ with one endpoint at $e^{i\theta}$    form angles
$\alpha_1$ and  $\alpha_2$, then the sub--domain  of  $\mathbb{D}$ bounded by
these chords and  the circle $\left\{z:|z-e^{i\theta}|= r\right\}$     is the
Stolz angle with  vertex in  $e^{i\theta}$.          For any  Stolz  angle in
$\mathbb{D}$ with vertex  in  $e^{i\theta}$   there exists a hyperbolic angle
$\Delta_r\gamma$ which is contained in  it;   we have  also the converse: any
hyperbolic     angle  $\Delta_r\gamma$ contains a Stolz angle in $\mathbb{D}$
with vertex  in  $e^{i\theta}$. If   $\gamma$    is an arc of the horo--cycle
$\{z:|z-\frac {e^{i\theta}}2|=\frac 12\}$  with  endpoints  $0$ and $e^{i\theta}$,
then the curvilinear angle $\Delta_r \gamma$ is the domain    bounded by arcs
of two horo--cycles  which contain $e^{i\theta}$. That      curvilinear angle
$\Delta_r\gamma$ we call  the horo--cyclic angle (see \cite{FUKS}).
\end{example}

At the end of this section we recall the known definition of the Fr\'{e}shet
distance between two curves.  For       curves $\gamma_1$ and $\gamma_2$ the
Fr\'{e}shet distance between them is
\begin{equation*}
d_F (\gamma_1,\gamma_2)  =\inf_\varphi \sup_{z\in\gamma_1} d_h(z,\varphi(z));
\end{equation*}
the infimum  is taken  among  all homeomorphisms $\varphi:\gamma_1\rightarrow
\gamma_2$.

\section{Preliminaries}
The following lemma is straightforward and therefore we omit a proof.

\begin{lemma}\label{LE1}
For all $r\in [0,1)$ and $w\in\mathbb{D}$ we have
\begin{equation*}
\overline{D}_{ph}(w,r)=\varphi_w (\overline{D}(r)).
\end{equation*}
Thus,
\begin{equation*}
\Delta_r \gamma  = \bigcup_{w\in\gamma}\overline{D}_{ph}(w,r) = \bigcup_{w\in
\gamma}\varphi_w (\overline{D}(r)).
\end{equation*}
\end{lemma}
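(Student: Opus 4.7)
The plan is to verify the first identity by a direct algebraic computation showing that $\varphi_{w}$ is an isometry of the pseudo--hyperbolic metric centered at $w$. Concretely, I would fix $w\in\mathbb{D}$, $z\in\mathbb{D}$, and compute
\begin{equation*}
\varphi_w(z)-w=\frac{z+w}{1+z\overline{w}}-w=\frac{z(1-|w|^2)}{1+z\overline{w}},\qquad
1-\varphi_w(z)\overline{w}=\frac{1-|w|^2}{1+z\overline{w}},
\end{equation*}
so that the quotient collapses to $z$. Taking modulus yields $d_{ph}(\varphi_w(z),w)=|z|$, i.e.\ $\varphi_w$ carries the Euclidean distance from $0$ to the pseudo--hyperbolic distance from $w$.

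From this identity the first equality is immediate: $\varphi_w(z)\in\overline{D}_{ph}(w,r)$ holds iff $|z|\le r$, i.e.\ iff $z\in\overline{D}(r)$. Since $\varphi_w$ is a bijection of $\mathbb{D}$ onto itself, this gives
\begin{equation*}
\varphi_w(\overline{D}(r))=\overline{D}_{ph}(w,r).
\end{equation*}
The second assertion then follows simply by taking the union over $w\in\gamma$ in the first equality, invoking the very definition of $\Delta_r\gamma$ recorded in the Notations section.

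There is no real obstacle here; the only subtlety worth mentioning is that one should observe that the argument is valid uniformly for $r\in[0,1)$, and that the passage from the open to the closed disc presents no issue because $\varphi_w$ is a homeomorphism of $\mathbb{D}$ (indeed of $\overline{\mathbb{D}}$) onto itself, so images of closed subsets of $\mathbb{D}$ are closed in $\mathbb{D}$. This is exactly the reason the lemma was flagged as straightforward and the proof is omitted in the text.
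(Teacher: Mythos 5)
Your computation is correct: the identities $\varphi_w(z)-w=\frac{z(1-|w|^2)}{1+z\overline{w}}$ and $1-\varphi_w(z)\overline{w}=\frac{1-|w|^2}{1+z\overline{w}}$ do give $d_{ph}(\varphi_w(z),w)=|z|$, from which both equalities of the lemma follow exactly as you say. The paper omits the proof as straightforward, and your argument is precisely the standard verification that was intended, so there is nothing to compare or correct.
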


\begin{definition}\label{DEF1}
Let $\gamma_1$      and  $\gamma_2$  be two curves with a    same endpoint in
$\Gamma$.     If  $\gamma_2\subseteq \Delta_r\gamma_1= \bigcup_{w\in\gamma_1}
\overline{D}_{ph}(w,r)$  for some   $r\in (0,1)$,             we say that the
pseudo--hyperbolic distance  between   $\gamma_2$ and $\gamma_1$ is less then
$r$. If  instead of the pseudo--hyperbolic distance we use   the   hyperbolic
distance,  then we say that the hyperbolic distance between    $\gamma_2$ and
$\gamma_1$ is less then $r'  =  \log\frac{1+r}{1-r}\in (0,\infty)$ (regarding
\eqref{EQ1}). In this case we   will  simply   say that  the distance between
curves if finite  (see the following lemma);  if  this is  not  the case,  we
say that the  distance is infinite.
\end{definition}

\begin{example}\label{RE2}
Hyper--cycles   in    $\mathbb{D}$ which terminate in a point $e^{i\theta}\in
\Gamma$ are simple  curves   such   that the distance between any of them  is
finite.   The  same is true for horo--cycles  in $\mathbb{D}$ which terminate
in $e^{i\theta}$.  However, the distance between    any      hyper--cycle and
any  horo--cycle  both terminating in $e^{i\theta}$     is  infinite.
\end{example}

The following definition introduces a relation $\sim$  in the family  of all
curves in $\mathbb{D}$ which  terminate in a same  point in  $\Gamma$.

\begin{definition}\label{DEF2}
Let $\gamma_1$  and  $\gamma_2$ be curves in the disc $\mathbb{D}$ ending in
$e^{i\theta}\in \Gamma $. We write $\gamma_1\sim\gamma_2$ if there exist  $r
\in (0,1)$ such that   $\gamma_1\subseteq \Delta_{r} \gamma_2$, i.e., if the
distance between  $\gamma_1$ and  $\gamma_2$    is finite.
\end{definition}

\begin{lemma}\label{LE2}
The relation     $\sim$ is an relation of equivalence in the family of  all
curves in the disc $\mathbb{D}$  with the same endpoint  in  $\Gamma$.  The
class of equivalence for   a curve $\gamma$   will be denoted by $[\gamma]$.

In   order  to     establish the symmetry property of $\sim$ we will use the
following  assertion. Let the  hyperbolic  between $\gamma_1$ and $\gamma_2$
be less   then $r\in (0,\infty)$, then  we have: For all $r_1\in (0,\infty)$
there exist     $r_2\in (0,\infty)$ such that $\Delta_{r_1}\gamma_1\subseteq
\Delta_{r_2}\gamma_2$, and   for all $r_2\in (0,\infty)$ there exist $r_1\in
(0,\infty)$ such that   $\Delta_{r_2}\gamma_2\subseteq \Delta_{r_1}\gamma_1$.
\end{lemma}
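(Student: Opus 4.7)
To check that $\sim$ is an equivalence relation, I need to verify reflexivity, symmetry, and transitivity, and also to prove the auxiliary assertion. Reflexivity is immediate: each $z\in\gamma$ lies in $\overline{D}_{ph}(z,r)$, hence $\gamma\subseteq\Delta_r\gamma$. The first containment of the auxiliary and transitivity of $\sim$ both reduce to the hyperbolic triangle inequality: if $\gamma_1\subseteq\Delta_r\gamma_2$, then for $z\in\Delta_{r_1}\gamma_1$ one picks $w\in\gamma_1$ with $d_h(z,w)\le r_1$ and $w'\in\gamma_2$ with $d_h(w,w')\le r$, giving $z\in\Delta_{r_1+r}\gamma_2$; chaining two inclusions of this kind gives transitivity.

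The real content is symmetry: given $\gamma_1\subseteq\Delta_r\gamma_2$, produce $r''$ with $\gamma_2\subseteq\Delta_{r''}\gamma_1$. Once this is in hand, the second containment of the auxiliary follows by applying the first direction with $\gamma_1$ and $\gamma_2$ interchanged. My plan is to split $\gamma_2$ into a compact initial arc and a tail near $e^{i\theta}$. On the compact arc, the function $w\mapsto d_h(w,\gamma_1)$ is continuous (indeed $1$-Lipschitz in $w$) and everywhere finite, hence bounded. The tail is the genuine issue, since two sequences converging Euclideanly to $e^{i\theta}$ may lie at infinite hyperbolic distance, as the horocycle/hypercycle dichotomy of Example \ref{RE2} illustrates.

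For the tail, the plan is to build a continuous \emph{shadow map} $\Phi\colon\gamma_1\to\gamma_2$ with $d_h(z,\Phi(z))\le r_0$ uniformly, using a monotone parametrization of $\gamma_2$ together with the uniform hyperbolic bound from the hypothesis. Since $\Phi(z)\to e^{i\theta}$ as $z\to e^{i\theta}$ along $\gamma_1$, the image $\Phi(\gamma_1)$ is a connected subset of $\gamma_2$ with $e^{i\theta}$ as an accumulation point, and so contains an entire tail $\gamma_2([t_0,1))$. Every $w$ in this tail has the form $\Phi(z)$ and therefore satisfies $d_h(w,\gamma_1)\le r_0$. Combining the compact and tail bounds yields $\gamma_2\subseteq\Delta_{r''}\gamma_1$, closing the argument. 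The main obstacle is the continuous selection of $\Phi$: a naive closest-point or first-hit rule need not be continuous, and continuity must be extracted from the simplicity of $\gamma_2$ together with the geometry of the tube $\Delta_r\gamma_2$ near the boundary point.
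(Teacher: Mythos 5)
You have diagnosed the lemma correctly: reflexivity, transitivity, and the first containment of the auxiliary assertion are routine consequences of the hyperbolic triangle inequality, and everything else reduces to symmetry. But your proposal stops exactly where the proof has to begin. The continuous shadow map $\Phi$ is never constructed; you yourself flag its continuity as ``the main obstacle,'' and that deferred step carries the entire content of the symmetry claim. Note also that your reduction from ``$\Phi(\gamma_1)$ is connected and accumulates at $e^{i\theta}$'' to ``$\Phi(\gamma_1)$ contains a whole tail of $\gamma_2$'' needs $\Phi$ to be continuous into $\gamma_2$ with its arc parametrization, not merely into $\mathbb{D}$, which is a further unexamined point. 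As written this is a plan, not a proof.

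The deeper problem is that the missing step cannot be supplied for the relation as Definition \ref{DEF2} literally defines it, because the one-sided inclusion $\gamma_1\subseteq\Delta_r\gamma_2$ is not symmetric. Let $\gamma_1$ be the radius ending at $1$, and let $\gamma_2$ follow that radius except on a sequence of gaps $(t_n,s_n)$ with $d_h(t_n,s_n)<1$ and $t_n\to 1$, on each of which $\gamma_2$ instead makes a simple excursion that stays inside the Euclidean disc $|z-1|<2^{-n+1}$ (so $\gamma_2$ is simple and still terminates at $1$) but reaches a point $p_n$ with $d_h(p_n,\gamma_1)\ge n$; this is possible because points Euclidean--close to $1$ can be hyperbolically arbitrarily far from the radius (compare the horocycle/hypercycle dichotomy of Example \ref{RE2}). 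Then every point of $\gamma_1$ lies within hyperbolic distance $1$ of $\gamma_2$, so the distance from $\gamma_1$ to $\gamma_2$ is finite in the sense of Definition \ref{DEF1}, yet $\gamma_2\not\subseteq\Delta_{r'}\gamma_1$ for any $r'$. The same picture shows concretely why no continuous shadow map with $d_h(z,\Phi(z))\le r_0$ can exist: to move continuously within the simple arc $\gamma_2$ from a point near $t_n$ to a point near $s_n$ one is forced through the whole excursion, which leaves every bounded hyperbolic neighbourhood of $\gamma_1$. For what it is worth, the paper's own proof hides the identical hole --- it proves only the first containment of the auxiliary assertion and declares that the second ``follows immediately,'' which is precisely the unproved symmetry. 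The lemma becomes true, and your compact-arc-plus-tail bookkeeping then does go through without any shadow map, only if the definition is symmetrized, e.g.\ by declaring $\gamma_1\sim\gamma_2$ when both $\gamma_1\subseteq\Delta_r\gamma_2$ and $\gamma_2\subseteq\Delta_r\gamma_1$ hold for some $r$.
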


\begin{proof}
We      will firstly prove the assertion. We  will proof the first statement,
since the second follows immediately from the first one.

From Definition \ref{DEF1}    it follows $\gamma_1\subseteq\Delta_r\gamma_2=
\bigcup_{w\in\gamma_2}\overline{D}_h(w,r)$. Let  $r_2=r_1+r$.  We  will show
$\Delta_{r_1}\gamma_1\subseteq\Delta_{r_2}\gamma_2$. Let $z\in  \Delta_{r_1}
\gamma_1$. There exists  $w_1\in \gamma_1$ such that $z\in\overline{D}_h(w_1,
\gamma_1)$. Since $\gamma_1\subseteq \Delta_r \gamma_2$, there exist $w_0\in
\gamma_2$   such that $w_1\in D_h(w_0,r)$. Using the triangle inequality, we
obtain
\begin{equation*}
d_h(z,w_0)\le  d_h(z,w_1)+ d_h (w_1 ,w_0) < r_1+r=r_2;
\end{equation*}
thus   $z\in D_h(w_0,r_2)$, i.e.,  $z\in \Delta_{r_2} \gamma_2$.     We have
proved  $\Delta_{r_1}\gamma_1\subseteq \Delta_{r_2}\gamma_2$.

Let     us now establish the that $\sim$ is a relation of equivalence. It is
clear that   $\gamma\subseteq\Delta_r\gamma$ for all $r\in (0,\infty)$, what
means   that $\sim$ is reflexive. If  $\gamma_1\sim\gamma_2$, then $\gamma_1
\subseteq\Delta_r\gamma_2$; from   the assertion it follows that there exist
$r'$ such that  $\gamma_2\subseteq\Delta_{r'}\gamma_1$, i.e., $\gamma_2 \sim
\gamma_1$.  Thus,      from $\gamma_1\sim\gamma_2$ it follows $\gamma_2 \sim
\gamma_1$. We have proved that $\sim$ is a symmetry relation. It  remains to
establish      the  transitivity of $\sim$.   Let $\gamma_1\sim\gamma_2$ and
$\gamma_2\sim\gamma_3$. It follows  $\gamma_2 \subseteq\Delta_{r_1}\gamma_1$
and $\gamma_3\subseteq\Delta_{r_2}\gamma_2$       for some $r_1,\, r_2\in (0,
\infty)$. As   in the assertion, one may  prove   $\gamma_3\subseteq\Delta_s
\gamma_1$ for  $s = r_1 + r_2$. Thus, $\gamma_1\sim \gamma_3$.
\end{proof}

\begin{remark}
For any point $e^{i\theta}\in\Gamma$   there exist infinity many  classes of
equivalence for the  relation $\sim$.     Namely,  if curves  $\gamma_1$ and
$\gamma$ have a different order of contact   on  the    unit circle $\Gamma$
in the point  $e^{i\theta}$, then   $[\gamma_1]\ne  [\gamma_2]$. For example,
all curves in  the $\mathbb{D}$ which terminate in  $e^{i\theta}$ that   are
not tangent to        $\Gamma$ in $e^{i\theta}$ belong  to the same class of
equivalences. All  horo--cycles tangent   on   the boundary $\Gamma$  in the
point $e^{i\theta}$ belong to the same class  of equivalences, etc.
\end{remark}

\begin{lemma}\label{LE4}
If the Fr\'{e}shet  distance between curves   $\gamma_1,\, \gamma_2\subseteq
\mathbb{D}$ ending in the same point $e^{i\theta}\in\Gamma$  is finite, then
$\gamma_1\sim\gamma_2$. The converse does not hold.
\end{lemma}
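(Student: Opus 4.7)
The plan is to prove the two directions separately.

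For the forward implication, I just unpack the Fr\'{e}chet definition. Given $d_F(\gamma_1,\gamma_2)=R<\infty$, choose a homeomorphism $\varphi\colon\gamma_1\to\gamma_2$ with $\sup_{z\in\gamma_1}d_h(z,\varphi(z))<R+1$. Then for every $z\in\gamma_1$ the image $w=\varphi(z)\in\gamma_2$ satisfies $z\in\overline{D}_h(w,R+1)\subseteq\Delta_{r'}\gamma_2$, where $r'\in(0,1)$ is the pseudo-hyperbolic radius paired with the hyperbolic radius $R+1$ by \eqref{EQ1}. Hence $\gamma_1\subseteq\Delta_{r'}\gamma_2$, so $\gamma_1\sim\gamma_2$ by Definition~\ref{DEF2}.

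For the converse, I produce an explicit counterexample. Without loss of generality take $e^{i\theta}=1$ and $\gamma_1=[0,1)$. Pick positive sequences $(a_n)_{n\ge 1}$ and $(b_n)_{n\ge 1}$ with $a_n<b_n<a_{n-1}$ for all $n\ge 1$ (setting $a_0:=1$) and $a_n/b_n\to 0$; for instance $b_n=q^{3^n}$ and $a_n=b_n^{2}$ for any fixed $q\in(0,1)$. Set $p_n=1-a_n$ and $q_n=1-b_n$, so that $p_n>q_n$ (each excursion from $p_n$ to $q_n$ is backward along the real axis) while $q_n>p_{n-1}$ ensures net forward progress. Define $\gamma_2$ as a simple curve in $\mathbb{D}$ from $0$ terminating at $1$ that visits $p_1,q_1,p_2,q_2,\ldots$ in that order, joining consecutive vertices by thin bumps just above the real axis with rapidly shrinking heights so that $\gamma_2$ is simple and stays inside a fixed Stolz angle at $1$. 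By the remark after Definition~\ref{DEF2}, $\gamma_2\sim\gamma_1$.

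To show $d_F(\gamma_1,\gamma_2)=\infty$, suppose for contradiction that some homeomorphism $\varphi$ gives $\sup_{z\in\gamma_1}d_h(z,\varphi(z))\le C<\infty$. Parametrising both curves over $[0,1)$ with the terminal point at $t\to 1$, $\varphi$ is induced by a monotonically increasing bijection $\psi\colon[0,1)\to[0,1)$ (the only continuous injective self-maps of $[0,1)$ are strictly monotone increasing). Let $t_n^{p}$ and $t_n^{q}$ denote the $\psi$-preimages of the parameter values at which $\gamma_2$ is at $p_n$ and $q_n$. The bound $d_h(t_n^{p},p_n)\le C$ confines $t_n^{p}$ to the pseudo-hyperbolic disc $\overline{D}_{ph}(p_n,\rho)$ of radius $\rho=\tanh(C/2)<1$; a M\"{o}bius computation (using Lemma~\ref{LE1}) shows that for real centre $p_n$ close to $1$ this forces $1-t_n^{p}\asymp a_n$ with constants depending only on $\rho$. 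Similarly $1-t_n^{q}\asymp b_n$. But monotonicity of $\psi$ requires $t_n^{p}<t_n^{q}$, i.e., $1-t_n^{p}>1-t_n^{q}$, which contradicts $a_n/b_n\to 0$ for all sufficiently large $n$.

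The main obstacle is to ensure the argument rules out \emph{every} admissible homeomorphism rather than a particular one; the two pillars are the enforced monotonicity of $\psi$ and the length-scale mismatch between the rapidly converging $p_n$ and slowly converging $q_n$. Carrying out the simple-curve construction of $\gamma_2$ explicitly is routine but requires some care so that consecutive connecting bumps do not intersect each other.
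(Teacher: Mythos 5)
Your proof is correct and follows essentially the same route as the paper: the forward implication is the same unpacking of the Fr\'{e}chet definition, and your counterexample --- a curve making backward excursions along the radius whose hyperbolic lengths tend to infinity, defeated by the order--preservation of any homeomorphism of $[0,1)$ --- is a concrete instantiation of the paper's construction with its points $z_n,\, w_n$. The only difference is cosmetic: you quantify the forced displacement via the estimate $1-t_n^{p}\asymp a_n$ on the radius, where the paper argues directly from the triangle inequality that the image points $s_n$ and $t_{n+1}$ must appear on $\gamma_1$ in the wrong order.
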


\begin{proof}
If      the Fr\'{e}shet distance between $\gamma_1$ and $\gamma_2$ is finite,
then from     the definition of the Fr\'{e}shet distance and from Definition
\ref{DEF2} immediately  follows   that $\gamma_1 \sim\gamma_2$.   Namely, if
$d_F(\gamma_1, \gamma_2)<r$ for some $r\in (0,\infty)$, then        for some
homeomorphism   $\varphi:\gamma_1\rightarrow\gamma_2$ holds $d(z,\varphi(z))
<r$ for all $z\in \gamma_1$. This clearly        implies  $\gamma_2\subseteq
\Delta_r \gamma_1$,  i.e.,    $d(\gamma_1, \gamma_2)\le r<\infty$. Thus,  in
view of Definition \ref{DEF2}   we have  $\gamma_1\sim\gamma_2$.

In  order to prove the second statement of this lemma, we  will construct  an
example of two curves   $\gamma_1$     and $\gamma_2$ such that $\gamma_1\sim
\gamma_2$, but     the Fr\'{e}shet distance between $\gamma_1$ and $\gamma_2$
is infinite.

Let    $\gamma_1$ be a radius od the unit disc $\mathbb{D}$ with one endpoint
in $e^{i\theta}$.   We will show that for any $r\in (0,\infty)$ there   exist
a curve   $\gamma_2\subseteq\Delta_r\gamma_1$  that the Fr\'{e}shet  distance
between $\gamma_1$ and $\gamma_2$ is not finite.

{\it Construction of $\gamma_2$}. Let us chose points  $z_1,\, w_1,\, z_2,\,
w_2, ..., z_n,\, w_n,...\in\gamma_1$ such that $d_h(z_n,z_{n+1})\rightarrow
\infty$ as $n\rightarrow \infty,\, d_h(z_n,w_{n+1})\le 1$  for all $n\ge 1$,
and that the  order of crossing  of $\gamma_1$ thought the preceding points
is  as    they appear in the sequence. Regarding the way we selected points
$z_1,\, w_1,\, z_2,\, w_2,...,z_n,\, w_n,...$ it follows that $d_h (z_{n +1},
w_n ) \rightarrow \infty$ as $n\rightarrow\infty$.  For  $\gamma_2$ we will
take any curve  in $\Delta_r\gamma_1$ which contains the  preceding  points
in the following order:
\begin{equation*}
z_1,\, z_2,\, w_1,\, z_3,\, w_2,\, z_4,\dots,\, z_n,\, w_{n-1},\ z_{n+1},\dots
\end{equation*}
We will  show now that the Fr\'{e}shet distance   between curves $\gamma_1$
and $\gamma_2$ is not finite. Assume that  $\varphi$    is   an  arbitrary
homeomorphism between  $\gamma_2$ and $\gamma_1$.   Let $z_1,\, z_2,\, z_3,\dots,\, z_n,...$
($w_1,\, w_2,\, w_3,\dots,w_n,\dots$ which  belong  to $\gamma_2$)      be
corespondent to  $t_1,\, t_2,\, t_3,\dots,t_n,\dots$ (i.e., $s_1,\, s_2,\, s_3,\dots,s_n,\dots$
in  $\gamma_1$) via the homeomorphism $\varphi$.  The schedule of crossing of $\gamma_1$
thought   the  preceding points   is:
\begin{equation*}
t_1,\, t_2,\, s_1,\, t_3,\, s_2,\, t_4, \dots, t_n,\ s_{n-1},\, t_{n+1},\, s_n,\dots,
\end{equation*}
i.e., the curve across the point   $t_{n+1}$   and then  $s_n$.  Since
$d_h (z_{n+1},w_n) \rightarrow \infty$ as $n\rightarrow\infty$, for any $D>0$
there exist an integer $n_0\ge  1 $ such that for every $n\ge n_0$    holds
$d_h (z_{n+1}, w_n ) > 2D +1$. However,  if  the Fr{\'e}shet  distance
between $\gamma_1$ and $\gamma_2$ would be  bounded by $D$, then we will
have $d_h(z_{n+1},\varphi(z_{n+1})) =  d_h (z_{n+1},t_{n+1})\le D$ and
$d_h(w_n ,\varphi(w_n ))=  d_h (w_n , s_n )\le D$   for every    $n\ge n_0$.
In view of the relation $d_h(z_{n+1},w_n ) > 2D +1$ it follows   that the
curve $\gamma_1$ first across the  point $s_n$ and then $t_{n+1}$. This is
the contradiction.
\end{proof}

\section{Curvilinear boundary behavior of arbitrary functions}

\begin{theorem}\label{TH1}
Let  $f:\mathbb{D}\rightarrow\overline{\mathbb{C}}$ be an arbitrary function,
let $\{w_n\}\subseteq\mathbb{D}$ be a sequence such that $\lim_{n\rightarrow
\infty} w_n=e^{i\theta}$, and let $c\in\overline{\mathbb{C}}$. The following
two conditions are equivalent:
\begin{enumerate}
\item the sequence $\{f_n=f\circ \varphi_n\}$, where $\varphi_n=\varphi_{w_n}$,
is convergent in the local topology of $\mathbb{D}$ to the constant function
$c$;
\item  for any compact subset $K\subset\mathbb{D}$ holds
\begin{equation*}
C \left(f,\bigcup_{n\in\mathbb{N}} \varphi_{n}(K), e^{i\theta}\right) =\{c\}.
\end{equation*}
\end{enumerate}
\end{theorem}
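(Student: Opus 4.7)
The plan is to prove both implications by the same mechanism: translate statements about $f_n=f\circ\varphi_n$ on a fixed compact $K\subset\mathbb{D}$ into statements about $f$ on the ``moving'' compact sets $\varphi_n(K)$, and note that these sets accumulate at $e^{i\theta}$ as $n\to\infty$. The essential preliminary observation is that $\varphi_{w_n}(z)\to e^{i\theta}$ uniformly for $z$ in any compact $K\subset\mathbb{D}$; this follows from the identity
\begin{equation*}
\varphi_w(z)-e^{i\theta}=\frac{(w-e^{i\theta})-z\,e^{i\theta}(\bar w-e^{-i\theta})}{1+z\bar w},
\end{equation*}
together with the uniform lower bound $|1+z\bar w|\ge 1-|z|\,|w|$ available when $z\in K$ and $|w|$ is near $1$. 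Consequently, for any compact $K$, the set $A:=\bigcup_{n\in\mathbb{N}}\varphi_n(K)$ has $e^{i\theta}$ as an accumulation point, so $C(f,A,e^{i\theta})$ is a well defined, nonempty (by compactness of $\overline{\mathbb{C}}$), closed subset of $\overline{\mathbb{C}}$.

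For (1)$\Rightarrow$(2), fix compact $K$ and take any $c'\in C(f,A,e^{i\theta})$, realized by points $\zeta_k=\varphi_{n_k}(z_k)\in A$ with $z_k\in K$, $\zeta_k\to e^{i\theta}$, and $f(\zeta_k)\to c'$. I claim $n_k\to\infty$ after passing to a subsequence: if $\{n_k\}$ admitted a bounded subsequence, then the corresponding $\zeta_k$ would lie in a finite union of compact subsets of $\mathbb{D}$, contradicting $\zeta_k\to e^{i\theta}\in\Gamma$. Passing to a further subsequence with $z_k\to z_0\in K$, the hypothesis that $f_{n_k}\to c$ uniformly on $K$ in the spherical metric yields $f(\zeta_k)=f_{n_k}(z_k)\to c$, so $c'=c$. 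Hence $C(f,A,e^{i\theta})=\{c\}$.

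For (2)$\Rightarrow$(1) I argue by contradiction: if $f_n\not\to c$ uniformly on some compact $K\subset\mathbb{D}$, then there exist $\varepsilon>0$, a subsequence $\{n_k\}$ and points $z_k\in K$ with $d_S(f_{n_k}(z_k),c)\ge\varepsilon$. Passing to a further subsequence, $z_k\to z_0\in K$ and $f_{n_k}(z_k)\to c^*\in\overline{\mathbb{C}}$ with $c^*\neq c$. Setting $\zeta_k=\varphi_{n_k}(z_k)\in A$, the uniform convergence $\varphi_{w_n}(z)\to e^{i\theta}$ on $K$ forces $\zeta_k\to e^{i\theta}$, while $f(\zeta_k)=f_{n_k}(z_k)\to c^*$. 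Thus $c^*\in C(f,A,e^{i\theta})\setminus\{c\}$, contradicting (2).

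The only real technical points are the uniform convergence of the M\"obius translates on compact subsets, and the extraction argument showing that once $\zeta_k=\varphi_{n_k}(z_k)$ tends to $\Gamma$ one must have $n_k\to\infty$; both rest on the elementary fact that each individual $\varphi_n(K)$ is compact in $\mathbb{D}$. Apart from these, the proof is a direct diagonal--type comparison between the two convergence statements, and I do not anticipate any serious obstacle.
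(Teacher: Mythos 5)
Your proof is correct and follows essentially the same route as the paper's: the forward direction identifies the index $n_k$ of each point $\zeta_k=\varphi_{n_k}(z_k)$, shows $n_k\to\infty$, and invokes uniform convergence on $K$, while the reverse direction is the same contrapositive construction $u_k=\varphi_{n_k}(z_k)$ producing a cluster value different from $c$. The only addition is your explicit verification that $\varphi_{w_n}\to e^{i\theta}$ uniformly on compacts, which the paper takes for granted; this is a welcome but minor elaboration.
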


\begin{proof}
{\it (1) implies (2)}. Let $K$ be any compact subset of $\mathbb{D}$. Since
$\lim_{n\rightarrow\infty} w_n=e^{i\theta}$, it  follows that $e^{i\theta}$
is the  single   point od adherence in $\Gamma$ for the set  $\bigcup_{n\in
\mathbb{N}}\varphi_{w_n}(K)$. Since     $\{f_n = f\circ \varphi_{w_n}\}$ is
uniformly         convergent to the  constant $c$ on the  compact $K$,  for
every $\varepsilon>0$ we have $d_S(f \circ\varphi_{w_n}(z),c) <\varepsilon$
for all     $z\in K$    if   $n\ge n_0$, where  $n_0$   is big enough. Thus,
$f\circ\varphi_{w_n}(K)\subseteq D_S(c,\varepsilon)$ for all $n\ge n_0$. It
follows $f(\bigcup_{n\ge n_0}\varphi_{w_n}(K))\subseteq D_S(c,\varepsilon)$.
In  other    words,     $d_S(f(z),c)<\varepsilon$ if $z\in \bigcup_{n\ge n_0}
\varphi_{w_n}(K)$. Now, let $\{z_k\}\subseteq \bigcup_{n\ge n_0}\varphi_{w_n}
(K)$ be a sequence satisfying $\lim_{k\rightarrow\infty} z_k = e^{i\theta}$.
We will prove that $\lim_{k\rightarrow\infty} f(z_k) =  c$. For every $k\in
\mathbb{N}$ there exist $n_k\in\mathbb{N}$ such that $z_k\in\varphi_{n_k}(K)$.
Since $\lim_{k\rightarrow\infty} z_k=e^{i\theta}$, we have $n_k \rightarrow
\infty$ as $k\rightarrow  \infty$. If $k$ is big enough,   $k\ge k_0$, then
$n_k\ge n_0$      and we have $\{z_{k}:k\ge k_0\}\subseteq \bigcup_{n\ge n_0}
\varphi_n(K)$.  Thus, $d_S(f(z_k),c)<\varepsilon$.  In other words $\lim_{k
\rightarrow\infty} f(z_k)=c$.

{\it (2) implies (1)}. Let us prove the contraposition, that is the negation
of (1) implies the negation of (2). If the  sequence of functions  $\{f_n\}$
does not converge  to $c$ in the local topology of $\mathbb{D}$ to $c$, then
there exist a  compact        set  $K\subset\mathbb{D}$,   a positive number
$\varepsilon_0$, a  subsequence  $\{f_{n_k}\}$,  and a sequence     $\{z_k\}
\subseteq K$ such that $d_S(f_{n_k}(z_k),c)= d_S(f\circ\varphi_{n_k}(z_k),c)
\ge \varepsilon_0$ for all $k\in\mathbb{N}$. Denote $u_k=\varphi_{n_k}(z_k)$.
Then we have $\lim_{k\rightarrow\infty} u_k=e^{i\theta},\ \{u_k\}  \subseteq
\bigcup_{n\in\mathbb{N}}\varphi_n(K)$,   and $d_S(f(u_k),c)\ge\varepsilon_0$.
This   means $C(f,\bigcup_{n\in\mathbb{N}}\varphi_{w_n} (K),e^{i\theta})\not
\equiv \{c\}$, what is negation of (2).
\end{proof}

\begin{definition}\label{DEF3}
A function $f:\mathbb{D}\rightarrow\overline{\mathbb{C}}$ has the   $\Delta_
\gamma-$boundary value  $c\in\overline{\mathbb{C}}$ along the curve $\gamma$
which terminates in  $e^{i\theta}$ if $C(f,\Delta_r\gamma,e^{i\theta})   =
\{c\}$ for all $r\in (0,1)$, i.e.,
\begin{equation*}
\bigcup_{r\in (0,1)}C(f,\Delta_r\gamma,e^{i\theta}) =\{c\}.
\end{equation*}
\end{definition}

\begin{remark}
If   a curve    $\gamma$  lies in  some Stolz  angle with vertex in a point
$e^{i\theta}\in\Gamma$,  then    $\Delta_\gamma-$boundary value is the same
as the ordinary    angular boundary value  of $f$ in $e^{i\theta}$. In this
case  the point $e^{i\theta}$ is the Fatou point for the function $f$.   If
a curve $\gamma\subseteq\mathbb{D}$ is tangent to $\Gamma$ in $e^{i\theta}$
and the  order of contact is $1$, t hen  $\Delta_\gamma-$boundary value for
$f$ is the horo--cycle  boundary value for $f$ in $e^{i\theta}$.
\end{remark}

\begin{theorem}\label{TH2}
Let $f:\mathbb{D}\rightarrow\overline{\mathbb{C}}$ be any function  in the
unit disc and let a simple curve $\gamma\subseteq\mathbb{D}$ terminates in
a point $e^{i\theta}\in\Gamma$.  The        following three conditions are
equivalent:
\begin{enumerate}
\item there exist
$\Delta_\gamma-$boundary     value equal to $c\in\overline{\mathbb{C}}$ in
the point $e^{i\theta}$, i.e.,
\begin{equation*}
\bigcup_{r\in (0,1)}C(f,\Delta_r\gamma,e^{i\theta})=\{c\};
\end{equation*}
\item for any sequence $\{w_n\}\subseteq\gamma$ satisfying $\lim_{n\rightarrow
\infty}w_n=e^{i\theta}$ the sequence $\{f_n=f\circ\varphi_{w_n}\}$ converges
to the constant $c$ in the local topology  of  $\mathbb{D}$;
\item for any curve $\gamma_1\thicksim\gamma$ holds $\lim_{\gamma_1\ni z\rightarrow e^{i\theta}} f(z) = c$.
\end{enumerate}

Moreover, if there exists  $\Delta_\gamma-$boundary  value for $f$, then
it  does not depend   on the choice of a curve  in  the class $[\gamma]$.
\end{theorem}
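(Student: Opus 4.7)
My plan is to run the cycle $(1)\Leftrightarrow(2)\Rightarrow(3)\Rightarrow(1)$ and then read the independence statement off condition (3). The key bridge between the $\Delta_r\gamma$-picture of conditions (1), (3) and the $\varphi_{w_n}$-picture of (2) and of Theorem~\ref{TH1} is Lemma~\ref{LE1}: $\varphi_w(\overline{D}(r))=\overline{D}_{ph}(w,r)$.

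For $(1)\Leftrightarrow(2)$, given any sequence $\{w_n\}\subseteq\gamma$ with $w_n\to e^{i\theta}$, I apply Theorem~\ref{TH1}. For any compact $K\subset\mathbb{D}$, pick $r\in(0,1)$ with $K\subseteq\overline{D}(r)$; then
\[
\bigcup_{n}\varphi_{w_n}(K)\subseteq\bigcup_{n}\overline{D}_{ph}(w_n,r)\subseteq\Delta_r\gamma,
\]
so $C(f,\bigcup_n\varphi_{w_n}(K),e^{i\theta})\subseteq C(f,\Delta_r\gamma,e^{i\theta})$. Under (1) the right side equals $\{c\}$ and the left side is nonempty by compactness of $\overline{\mathbb{C}}$, so Theorem~\ref{TH1} yields (2). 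Conversely, under (2) a sequence $z_k\in\Delta_r\gamma$ with $z_k\to e^{i\theta}$ factors as $z_k=\varphi_{w_k}(y_k)$ with $w_k\in\gamma$ and $y_k\in\overline{D}(r)$; then $w_k\to e^{i\theta}$ and uniform convergence of $f\circ\varphi_{w_k}$ on $\overline{D}(r)$ forces $f(z_k)\to c$, giving (1). The same factorization immediately gives $(2)\Rightarrow(3)$: for $\gamma_1\sim\gamma$ with $\gamma_1\subseteq\Delta_r\gamma$, any $z_k\in\gamma_1$ tending to $e^{i\theta}$ writes as $\varphi_{w_k}(y_k)$, whence $f(z_k)\to c$.

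For $(3)\Rightarrow(1)$ I argue by contraposition, which is the technically hardest step. If (1) fails, fix $r\in(0,1)$, $\varepsilon_0>0$, and a sequence $z_k\in\Delta_r\gamma$ with $z_k\to e^{i\theta}$ and $d_S(f(z_k),c)\ge\varepsilon_0$; write $z_k\in\overline{D}_{ph}(w_k,r)$ with $w_k=\gamma(t_k)$, and pass to a subsequence making $\{t_k\}$ strictly increasing toward the endpoint. I then build $\gamma_1$ by following $\gamma$ and, near each $t_k$, replacing a tiny segment $\gamma|_{[a_k,b_k]}$ by two pseudo-hyperbolic geodesic arcs $\gamma(a_k)\to z_k\to\gamma(b_k)$ lying in $\overline{D}_{ph}(w_k,r+\delta)$ for a fixed small $\delta>0$. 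Since $\gamma_1\subseteq\Delta_{r+\delta}\gamma$, we get $\gamma_1\sim\gamma$, while the points $z_k\in\gamma_1$ produce a sequence along $\gamma_1$ on which $f$ does not tend to $c$, contradicting (3). The independence statement then follows from the assertion in Lemma~\ref{LE2}: if $\gamma'\sim\gamma$ and the $\Delta_\gamma$-boundary value equals $c$, then for each $r$ some $r_1$ satisfies $\Delta_r\gamma'\subseteq\Delta_{r_1}\gamma$, and thus $C(f,\Delta_r\gamma',e^{i\theta})\subseteq C(f,\Delta_{r_1}\gamma,e^{i\theta})=\{c\}$.

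The main obstacle I foresee is ensuring that the curve $\gamma_1$ above is genuinely simple. Pseudo-hyperbolic balls of a fixed radius retain a fixed hyperbolic size even as $w_k\to e^{i\theta}$, so naively the enlarged balls $\overline{D}_{ph}(w_k,r+\delta)$ may overlap heavily and successive detours can intersect each other or the untouched portions of $\gamma$. The remedy is a two-stage thinning: first pass to a subsequence of $\{t_k\}$ so sparse that the Euclidean gaps between consecutive $w_k$'s dominate the Euclidean diameters of these enlarged balls, and then choose each detour window $[a_k,b_k]$ short enough that the detour arcs lie in pairwise disjoint Euclidean neighborhoods of the $w_k$'s, disjoint also from the unmodified pieces of $\gamma$.
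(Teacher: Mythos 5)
Your proposal is correct and follows essentially the same route as the paper: the equivalence $(1)\Leftrightarrow(2)$ is reduced to Theorem \ref{TH1} via the identification $\varphi_w(\overline{D}(r))=\overline{D}_{ph}(w,r)$, and $(3)\Rightarrow(1)$ is proved by contraposition by threading a curve $\gamma_1\subseteq\Delta_{r}\gamma$ (hence $\gamma_1\sim\gamma$) through a sequence on which $f$ fails to tend to $c$. Your extra care about keeping $\gamma_1$ simple addresses a detail the paper passes over with ``connect with a curve in any way,'' but it is the same argument.
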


\begin{proof}
It    follows from Theorem \ref{TH1} that (1)  $\Leftrightarrow$ (2).  It is
clear that  (1)  implies (3). Let  us now prove that (3) implies (1). If (3)
holds then $\lim_{\gamma_1\ni z\rightarrow e^{i\theta}}f(z)=c$; suppose that
(1) is not true. Then for some $r\in (0,1)$ we have $C(f,\Delta_r\gamma,e^{i\theta})
\not\equiv\{c\}$. This means there exist  a sequence $\{z_n\}\subseteq\Delta
_r\gamma,\ \lim_{n\rightarrow\infty}z_n=e^{i\theta}$ such that $\lim_{n\rightarrow
\infty}f(z_n)=a\neq c$ or the previous boundary value does not exist. Points
of the sequence $\{z_n\}$ connect with a curve $\gamma_1$ (in any way)  such
that $\gamma_1\subseteq \Delta_r\gamma$. Now  we have $\lim_{\gamma_1\ni  z
\rightarrow e^{i\theta}} f(z)=c$     or  this limit does  not  exist.  Since
$\gamma_1   \thicksim  \gamma$, from  (3)  it follows that our assumption is
not correct. Thus, for every $r\in (0,1)$ we have $C(f,\Delta_r\gamma,e^{i\theta})
=\{c\}$.
\end{proof}

\begin{definition}\label{DEF4}
Let $f:\mathbb{D}\rightarrow \overline{\mathbb{C}}$ be any function. If for
every  curve $\gamma_1\in[\gamma ]$ we have $\lim_{\gamma_1\ni z\rightarrow
e^{i\theta}} f(z) =  c\in\overline{\mathbb{C}}$, then  we say  that $c$  is
$[\gamma]-$boundary value for $f$, i.e., the boundary    value of $f$ along
the  class $[\gamma]$.
\end{definition}

From  Theorem \ref{TH2} we have

\begin{corollary}\label{CORO2}
Let $f:\mathbb{D}\rightarrow\overline{\mathbb{C}}$ be an arbitrary function
and   let $\gamma$ be a simple curve which terminates in $\Gamma$. Then the
$\Delta_\gamma-$boundary  value  for $f$ exists if and only if there exists
$[\gamma]-$boundary   value for $f$ and they coincides.
\end{corollary}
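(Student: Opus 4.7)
The plan is to recognize that this corollary is essentially a direct translation of the equivalence $(1) \Leftrightarrow (3)$ already established in Theorem \ref{TH2}, rather than an independent result requiring fresh argument. The definitions of $\Delta_\gamma$-boundary value (Definition \ref{DEF3}) and $[\gamma]$-boundary value (Definition \ref{DEF4}) are worded so as to match conditions (1) and (3) of Theorem \ref{TH2} respectively, so the whole task reduces to unpacking these definitions and invoking the theorem.

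First I would observe that $\gamma \in [\gamma]$ by the reflexivity of $\sim$ established in Lemma \ref{LE2}, so both notions of boundary value refer to the same underlying curve structure and the same boundary point $e^{i\theta} \in \Gamma$. For the forward direction, assume the $\Delta_\gamma$-boundary value exists and equals $c \in \overline{\mathbb{C}}$; by Definition \ref{DEF3} this means exactly that $\bigcup_{r \in (0,1)} C(f, \Delta_r \gamma, e^{i\theta}) = \{c\}$, which is condition (1) of Theorem \ref{TH2}. Applying the theorem, we obtain condition (3): for every $\gamma_1 \sim \gamma$, $\lim_{\gamma_1 \ni z \to e^{i\theta}} f(z) = c$. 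By Definition \ref{DEF4}, this is precisely the existence of a $[\gamma]$-boundary value equal to $c$.

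For the reverse direction, assume the $[\gamma]$-boundary value exists and equals $c$; by Definition \ref{DEF4} this yields condition (3) of Theorem \ref{TH2} with the value $c$, and the theorem gives back condition (1), hence the $\Delta_\gamma$-boundary value exists and equals $c$ by Definition \ref{DEF3}. Since in each implication we exhibit the \emph{same} constant $c$ on both sides, the coincidence of the two values is automatic.

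There is no substantive obstacle in this argument; the only care required is bookkeeping to ensure the definitions line up verbatim with the theorem's hypotheses. The genuine work was already carried out in the proof of Theorem \ref{TH2}, where the connection of curvilinear limits along all curves equivalent to $\gamma$ with the cluster set along $\Delta_r \gamma$ was forged via the construction of an auxiliary curve $\gamma_1 \subseteq \Delta_r \gamma$ through a sequence $\{z_n\}$ witnessing the failure of (1).
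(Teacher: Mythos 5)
Your proposal is correct and matches the paper's approach exactly: the paper gives no separate proof, simply prefacing the corollary with ``From Theorem \ref{TH2} we have,'' i.e.\ it is read off from the equivalence $(1)\Leftrightarrow(3)$ of Theorem \ref{TH2} together with Definitions \ref{DEF3} and \ref{DEF4}, just as you argue.
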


\begin{remark}
Since  we  have infinity many classes of equivalences in the family of curves
which terminate  in  a point $e^{i\theta}$, we can speak  about infinity many
$\Delta_\gamma-$boundary    values   for   $f$    in the point $e^{i\theta}$.
\end{remark}

\begin{definition}\label{DEF6}
We say that a function   $f:\mathbb{D}\rightarrow\overline{\mathbb{C}}$  is
normal  along a curve $\gamma$ in $D(r)$ where $r\in (0,1)$   if the family
$\{f_w= f\circ\varphi_w:w\in\gamma\}$ is normal in $D(r)$  in  the sense of
Montel.
\end{definition}

\begin{definition}\label{DEF5}
A function $f:\mathbb{D}\rightarrow \overline{\mathbb{C}}$ is normal along a
simple  curve  $\gamma\subseteq\mathbb{D}$ if the family     $\{f_w = f\circ
\varphi_w : w\in\gamma\}$ is normal in the disc $\mathbb{D}$ in the sense of
Montel.
\end{definition}

\begin{corollary}\label{CORO3}
If $f:\mathbb{D}\rightarrow\overline{\mathbb{C}}$ has $\Delta_\gamma-$boundary
value, then  $f$   is a  normal function along the curve $\gamma$.
\end{corollary}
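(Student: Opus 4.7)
The plan is to deduce this corollary in essentially one step from Theorem \ref{TH2}. Assume $f$ has $\Delta_\gamma$-boundary value $c \in \overline{\mathbb{C}}$ at the endpoint $e^{i\theta}$ of $\gamma$, and let $\{f_{w_n}\}$ be an arbitrary sequence drawn from the family $\mathcal{F} = \{f_w : w \in \gamma\}$, indexed by $\{w_n\} \subseteq \gamma$. To establish normality of $\mathcal{F}$ in the Montel sense (Definition \ref{DEF5}), I must extract a subsequence of $\{f_{w_n}\}$ convergent in the local topology of $\mathbb{D}$ with respect to the spherical metric $d_S$.

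Using compactness of $\overline{\gamma}$ in $\overline{\mathbb{D}}$, I first pass to a subsequence (still denoted $\{w_n\}$) with $w_n \to w_0 \in \gamma \cup \{e^{i\theta}\}$. The essential case is $w_0 = e^{i\theta}$: the hypothesis of the corollary is exactly condition (1) of Theorem \ref{TH2}, so by the implication $(1) \Rightarrow (2)$ of that theorem the sequence $\{f_{w_n}\} = \{f \circ \varphi_{w_n}\}$ converges to the constant function $c$ uniformly on every compact subset of $\mathbb{D}$. This provides the required convergent subsequence and establishes normality of $\mathcal{F}$ relative to every sequence approaching the boundary along $\gamma$.

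If instead $w_0 \in \gamma \cap \mathbb{D}$, one either further refines $\{w_n\}$ to a subsequence tending to $e^{i\theta}$ along $\gamma$ (possible whenever such a subsequence exists, since $\gamma$ terminates at $e^{i\theta}$) and reapplies the argument above, or observes that the behavior of $f$ along a compact sub-arc of $\gamma$ lies outside the cluster-set framework under consideration in this section. The only genuine obstacle is the book-keeping step of checking that the cluster-set condition $\bigcup_{r \in (0,1)} C(f, \Delta_r \gamma, e^{i\theta}) = \{c\}$ defining the $\Delta_\gamma$-boundary value coincides verbatim with hypothesis (1) of Theorem \ref{TH2}; once this identification is made, no further analysis is required.
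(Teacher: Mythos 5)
Your central step is exactly the paper's proof: the hypothesis is condition (1) of Theorem \ref{TH2}, so by the implication (1) $\Rightarrow$ (2) every sequence $\{f\circ\varphi_{w_n}\}$ with $w_n\in\gamma$ and $w_n\to e^{i\theta}$ converges to the constant $c$ in the local topology of $\mathbb{D}$, and such sequences therefore trivially admit convergent subsequences. The paper's proof consists of this observation and nothing more (indeed it asserts, too strongly, that \emph{any} sequence drawn from $\{f_w:w\in\gamma\}$ converges to $c$). Where you go beyond the paper is in noticing the residual case $w_n\to w_0\in\gamma\cap\mathbb{D}$; but neither of your proposed resolutions works. If the sequence (after your first extraction) converges to an interior point $w_0$ of the curve, then no further subsequence can tend to $e^{i\theta}$, so option (a) is unavailable precisely when it is needed; and option (b), that such sequences ``lie outside the cluster-set framework,'' is a remark rather than an argument, since Definition \ref{DEF5} demands normality of the \emph{entire} family $\{f_w:w\in\gamma\}$, not only of its boundary-approaching part. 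For a truly arbitrary (possibly nowhere continuous) $f$ this case is a genuine obstruction --- the $\Delta_\gamma$-boundary value constrains $f$ only near $e^{i\theta}$, so nothing prevents the family from failing to be normal over a compact sub-arc --- and the paper's own two-line proof leaves exactly the same gap unacknowledged. If $f$ is continuous (in particular meromorphic), the case is harmless: $\varphi_{w_n}\to\varphi_{w_0}$ locally uniformly, hence $f\circ\varphi_{w_n}\to f\circ\varphi_{w_0}$ locally uniformly, which supplies the convergent subsequence. You should state that argument (or an added continuity hypothesis) explicitly instead of appealing to either of your two options.
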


\begin{proof}
If there exists    $\Delta_\gamma-$boundary value of  $f$ equal to $c$,  then
according to the Theorem \ref{TH2} any subsequence of the family $\{f_w =   f
\circ \varphi_w : w\in\gamma\}$ is convergent to the constant function $c$. Thus, this family is normal.
\end{proof}

\begin{remark}
According to Corollary \ref{CORO3} necessary condition for the existence of
$\Delta_\gamma-$boundary value of a  function   $f$ in a point $e^{i\theta}
\in\Gamma$ is normality of $f$ along the curve $\gamma$. However, normality
of the family  $\{f_w=f\circ\varphi_w: w\in\gamma\}$ in $\mathbb{D}$ is not
a sufficient condition. This shows the example of the meromorphic  function
from \cite{NOSHIRO.HOKKAIDO}, which does not have the radial boundary value
in any point of $\Gamma$  and   thus it does not have  the angular boundary
value.   This  also follows from       the example of an   elliptic modular
function     which is a normal analytic  function in   $\mathbb{D}$ but has
the radial boundary value (and thus) only in a countable subset of $\Gamma$.
\end{remark}

\begin{remark}\label{RE4}
Nosiro \cite{NOSHIRO.HOKKAIDO} considered normal meromorphic functions in the
disc $\mathbb{D}$  of the first order: a meromorphic function $f$ in the disc
$\mathbb{ D }$ is normal function of the first order if the family $\{ f\circ
\varphi_w:w\in\mathbb{D} \}$ is normal $\mathbb{D}$ and if any       boundary
function of this family is not a constant.          Nosiro proved that normal
meromorphic   functions of the first order does not poses an angular boundary
value. This result  follows from from our Theorem \ref{TH1}. Our theorem also
shows that a    normal    meromorphic  function of the  first order  does not
have $\Delta_\gamma-$boundary value in any point  of $\Gamma$ for  any  curve
$\gamma$    in disc $\mathbb{D}$ which   terminates in $e^{i\theta}\in\Gamma$.
Since Theorem \ref{TH1}    holds  for  any  function in $\mathbb{D}$  and for
$\Delta_\gamma-$boundary values, our theorem is a generalization of the result
Nosiro.
\end{remark}

\begin{remark}\label{RE5}
Bagemihl    and Seidel \cite{BAGEMIHL.SEIDEL.AASF}  constructed an analytic
function (see Example 2 there) which proves that in Theorem 1 in  \cite{BAGEMIHL.SEIDEL.AASF}
the condition of  normality of  the  function in $\mathbb{D}$  in order  to
has a  boundary value cannot be removed.   Corollary \ref{CORO3}       also
shows   that.  Corollary \ref{CORO3} is a  generalization of the  result of
Bagemihl    and Seidel for any function  in $\mathbb{D}$  and   curvilinear
boundary behavior.
\end{remark}

\begin{remark}\label{RE6}
Theorem \ref{TH2} and Corollary \ref{CORO3} also show that for the existence
of angular  boundary values of functions in a point  of $\Gamma$ (meromorphic,
analytic, harmonic,  etc.)  one need not assume their normality in the  disc
$\mathbb{D}$,  i.e., it is  not necessary    to   assume that a function  is
normal with respect to the   M\"{o}bius group of  conformal automorphisms of
$\mathbb{D}$ (see \cite{LEHTO.VIRTANEN.ACTA}), or one need not to assume the
condition of normality with respect the  hyperbolic  or  parabolic subgroups
or semigroups of the M\"{o}bius group (see      \cite{GAVRILOV.BURKOVA.DOKL},
\cite{LOVSHINA.DOKL} and \cite{PAVICEVIC.SUSIC.DOKL}). It is enough to assume
the condition of normality  of the family $\{f_w=f\circ \varphi_w:w\in\gamma
\}$     in $\mathbb{D}$, i.e., that $f$ is normal function along the   curve
$\gamma$ which is not tangent to $\Gamma$.
\end{remark}

The  following theorem is generalization of Theorem 1 in Lehto and Virtanen
work \cite{LEHTO.VIRTANEN.ACTA}     for  an arbitrary function in the  disc
$\mathbb{D}$ and for any  $\Delta_\gamma-$boundary limit.

\begin{theorem}\label{TH3}
Let $f:\mathbb{D}\rightarrow\overline{\mathbb{C}}$ be any function in the
disc  $\mathbb{D}$ and let $\gamma\subseteq\mathbb{D}$  be  a curve which
terminates in $e^{i\theta}\in\Gamma$.     Suppose      $\lim_{\gamma\ni z
\rightarrow e^{i\theta}}f(z) = c\in\overline{\mathbb{C}}$   and    assume
that $f$ does not have $\Delta_\gamma-$boundary limit.     Then for every
$\varepsilon>0$ there exist two curves $\gamma_1,\, \gamma_2\in [\gamma]$
such that the (pseudo--)hyperbolic   distance     between $\gamma_2$  and
$\gamma_1$ is  less then $\varepsilon$ and such that along $\gamma_1$ the
function  $f$ has the asymptotic boundary value $c$, and along $\gamma_2$
does not.
\end{theorem}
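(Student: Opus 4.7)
The plan is to locate a critical hyperbolic deflection $r_0$ at which the cluster set over the curvilinear angle first fails to equal $\{c\}$, and then to build two curves $\gamma_1,\gamma_2 \in [\gamma]$ straddling this threshold. Set
\begin{equation*}
r_0 = \sup\{r \geq 0 : C(f,\Delta_r\gamma,e^{i\theta}) = \{c\}\},
\end{equation*}
working with hyperbolic deflection by means of \eqref{EQ1}. The hypothesis $\lim_{\gamma\ni z\to e^{i\theta}}f(z)=c$ gives $C(f,\gamma,e^{i\theta})=\{c\}$, so $0$ lies in the displayed set; the failure of $f$ to have a $\Delta_\gamma$-boundary value gives $r_0 < \infty$; and since $\Delta_{r'}\gamma \subseteq \Delta_{r''}\gamma$ whenever $r' \leq r''$, the set is an interval. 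For the given $\varepsilon$, let $\varepsilon' = \log\frac{1+\varepsilon}{1-\varepsilon}$ and choose $0 \leq r_1 < r_0 \leq r_2$ with $r_2 - r_1 < \varepsilon'$, $C(f,\Delta_{r_1}\gamma,e^{i\theta}) = \{c\}$, and $C(f,\Delta_{r_2}\gamma,e^{i\theta}) \neq \{c\}$.

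From the latter I would extract $\{z_n\} \subseteq \Delta_{r_2}\gamma$ with $z_n \to e^{i\theta}$ and, after a subsequence, $f(z_n) \to a$ for some $a \neq c$; pick $w_n \in \gamma$ with $d_h(z_n,w_n) \leq r_2$ and discard the finitely many $z_n$ satisfying $d_h(z_n,w_n) \leq r_1$ (infinitely many such would force $a \in C(f,\Delta_{r_1}\gamma,e^{i\theta})$, contradicting the choice of $r_1$). Place $y_n$ on the hyperbolic geodesic from $w_n$ to $z_n$ at hyperbolic distance $r_1$ from $w_n$. Then $y_n \in \overline{D}_h(w_n,r_1) \subseteq \Delta_{r_1}\gamma$, so $f(y_n) \to c$; also $y_n \to e^{i\theta}$ and $d_h(y_n,z_n) = d_h(z_n,w_n) - r_1 < r_2 - r_1 < \varepsilon'$.

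Next I would build the curves as detour modifications of $\gamma$. After passing to a further subsequence so that the $w_n$ lie on $\gamma$ in order approaching $e^{i\theta}$, the closed balls $\overline{D}_h(w_n,r_2+\eta)$ are pairwise disjoint for some small fixed $\eta>0$, and $\gamma$ meets each such ball only in a small arc around $w_n$, define $\gamma_1$ by following $\gamma$ and, at each $w_n$, replacing a tiny arc through $w_n$ by the concatenation of the geodesic $w_n \to y_n$ with the geodesic $y_n \to w_n'$, where $w_n' \in \gamma$ lies just past $w_n$ toward $e^{i\theta}$; geodesic convexity of hyperbolic balls keeps both legs in $\overline{D}_h(w_n,r_1+\eta) \subseteq \Delta_{r_1+\eta}\gamma$. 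Define $\gamma_2$ in the same fashion but extend each detour so that the outbound leg runs the full geodesic $w_n \to z_n$ (which passes through $y_n$) and the return runs along a distinct geodesic from $z_n$ back to $\gamma$; the extended detour lies in $\overline{D}_h(w_n,r_2+\eta)$. Both curves are simple, terminate at $e^{i\theta}$, and belong to $[\gamma]$.

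To conclude, monotonicity of cluster sets combined with $\gamma_1 \subseteq \Delta_{r_1+\eta}\gamma$ (and $r_1+\eta < r_0$ by the choice of $\eta$) gives $C(f,\gamma_1,e^{i\theta}) \subseteq \{c\}$, hence $\lim_{\gamma_1 \ni z \to e^{i\theta}} f(z) = c$; and $\{z_n\} \subseteq \gamma_2$ with $f(z_n) \to a \neq c$ precludes $c$ as the limit along $\gamma_2$. Every point of $\gamma_2$ is either on $\gamma_1$ or on a geodesic of hyperbolic length less than $\varepsilon'$ emanating from some $y_n \in \gamma_1$, so $\gamma_2 \subseteq \Delta_{\varepsilon'}\gamma_1$, which is the desired (pseudo--)hyperbolic distance bound. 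The main obstacle is the curve construction itself --- ensuring simplicity and tube containment simultaneously --- which is handled by the sparse disjoint-ball subsequence together with geodesic convexity of hyperbolic balls, once the pivotal deflection $r_0$ and the intermediate points $y_n$ have been identified.
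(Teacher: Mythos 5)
Your strategy---locate the critical deflection $r_0=\sup\{r: C(f,\Delta_r\gamma,e^{i\theta})=\{c\}\}$ and produce a good curve just inside it and a bad curve just outside it, both equivalent to $\gamma$---is the same as the paper's (the paper calls your $r_0$ its $r_1$ and splits into the cases $r_1=0$ and $r_1>0$; note that your requirement $0\le r_1<r_0$ silently excludes the case $r_0=0$, where one must take $r_1=0$ and let $\gamma_1$ be $\gamma$ itself). The construction differs: the paper takes $\gamma_1$ to be an offset curve $\lambda'$ (a boundary component of a slightly smaller curvilinear angle) and threads $\gamma_2$ through the bad sequence inside the thin shell between two offsets, whereas you graft geodesic detours onto $\gamma$ itself, out to intermediate points $y_n$ for $\gamma_1$ and all the way to the bad points $z_n$ for $\gamma_2$. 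That is a legitimate alternative and it avoids the paper's identification of the shell $(\gamma_r^+,\gamma_{r+\varepsilon/4}^+]$ with $\Delta^+_{\varepsilon/2}\lambda'$.

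There is, however, a concrete gap in your final verification of $\gamma_2\subseteq\Delta_{\varepsilon'}\gamma_1$. You assert that every point of $\gamma_2$ is either on $\gamma_1$ or on a geodesic of length less than $\varepsilon'$ emanating from some $y_n\in\gamma_1$. That is false for the \emph{return} legs of $\gamma_2$'s detours: the return geodesic from $z_n$ back to $\gamma$ has hyperbolic length on the order of $d_h(z_n,\gamma)\approx r_2$, which bears no relation to $\varepsilon'$, and a point in the middle of it is not obviously within $\varepsilon'$ of any point of $\gamma_1$. (The outbound legs are fine: $[w_n,y_n]\subseteq\gamma_1$ and $[y_n,z_n]$ has length less than $\varepsilon'$.) The repair is routine but must be stated: choose the return endpoint $w_n''\in\gamma$ with $d_h(w_n,w_n'')<\delta$; by convexity of the hyperbolic distance along geodesics, the return geodesic from $z_n$ to $w_n''$ lies within Hausdorff distance $\delta$ of the outbound geodesic from $z_n$ to $w_n$, every point of which is within $\varepsilon'$ of $\gamma_1$; so the return leg lies in $\Delta_{\varepsilon'+\delta}\gamma_1$, and starting from $r_2-r_1<\varepsilon'/2$ and $\delta<\varepsilon'/2$ yields the stated bound. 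A comparable amount of unstated care is needed for simplicity of $\gamma_1$ and $\gamma_2$, since $\gamma$ may re-enter $\overline{D}_h(w_n,r_2+\eta)$ after leaving it and cut through a detour---though the paper's own proof is equally informal at the corresponding step (``this sequence may be connected by a curve'').
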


\begin{proof}
In the proof we will use the hyperbolic metric  $d_h$.

Since  $\lim_{\gamma\ni w\rightarrow e^{i\theta}}f(w)=c$ and since  $f$ does
not have $\Delta_\gamma-$boundary limit,      from the first part of Theorem
\ref{TH1} it follows that $C(f,\Delta_{r_0}\gamma, e^{i\theta})\not   \equiv
\{c\}$ for some $r_0\in (0,\infty)$. This means that the  set $\{r:C(f,\Delta_{r}
\gamma,e^{i\theta})\equiv \{c\}:0\le r<\infty\}$    is bounded from above by
$r_0$.    Thus, there  exist
\begin{equation*}
r_1=\sup \{r:C(f,\Delta_{r}\gamma,e^{i\theta})\equiv \{c\}:0\le r<\infty\}.
\end{equation*}
Let $\varepsilon$ be any positive number. We will consider the following two
cases.

{\it The case}   $r_1=0$.    Then $\gamma=\gamma_1$ and according to  Theorem
\ref{TH2} in the curvilinear angle $\Delta_{\frac \varepsilon 2}\gamma$ there
exist   a curve $\gamma_2$ along which function $f$ does not poses asymptotic
value $c$.  In view of Definition \ref{DEF1} the hyperbolic distance  between
curves $\gamma_2$     and $\gamma_1$ is less then $\varepsilon$.

{\it The case} $r_1\in (0,\infty)$.  We will denote $r_1$ with $r$. Denote by
$\gamma_r^+$   the part of the boundary of curvilinear angle $\Delta_r\gamma$
which is above     the curve $\gamma$, and with $\gamma_r^-$ the part of the
boundary of $\Delta_r\gamma$     which is below the curve $\gamma$. For every
$w'\in\gamma_r^+$  there exist  $w\in\gamma$  such that $d_h(w,w')=r$. On the
circle  (in the metric $d_h$)  which contains $w$ and $w'$ and  is orthogonal
on  $\Gamma$,  take the points $u$  which are between $w$ and $w'$  and which
satisfy  $d_h(w,u)=r-\frac \varepsilon 4$.     Points $u$   make the part  of
boundary of curvilinear  angle $\Delta_{\frac \varepsilon 4}\gamma$ which  is
on the same side of the curve $\gamma$ as $\gamma_r^+$.      That part of the
boundary produces a curve which will be denoted by $\lambda'$. Since $\lambda'
\subseteq\Delta_r\gamma$,  along $\lambda'$ there exists asymptotic  value of
$f$ equals to $c$. Consider now the curvilinear angle $\Delta_{\frac\epsilon2}
\lambda'$.   Denote with $\Delta^+_{\frac\epsilon 2}\lambda'$ the sub--domain
of curvilinear angle $\Delta_{\frac \epsilon 2}\lambda'$ which  is bounded by
the curves $\gamma_r^+$ and ${\lambda'}^+_{\frac\varepsilon 2}$. Let  $r_2\in
(r_1,\infty)$.  With $(\gamma^+_{r_1},\gamma^+_{r_2}]$  denote the     domain
$\Delta_{r_2}^+\gamma\setminus\Delta_{r_1}^+\gamma$, and with $(\gamma^-_{r_1},
\gamma^-_{r_2}]$ the domain $\Delta_{r_2}^-\gamma\setminus\Delta_{r_1}^-\gamma$.
In view of the preceding notations, it is not hard to see that $\Delta^+_{\frac\varepsilon2}
\lambda'\subseteq(\gamma^+_r,\gamma^+_{r+\frac \varepsilon 4}]$, what is
obvious from  the geometric interpretation  of these sets. Similarly,
$(\gamma^+_r,\gamma^+_{r+\frac \varepsilon 4}]\subseteq\Delta^+_{\frac \varepsilon 2}\lambda'$.
Namely, if  $z\in (\gamma^+_r,\gamma^+_{r+\frac\varepsilon 4}]$, then there
exists a disc $D_h(w,r+\frac\epsilon 4),\ w\in\gamma$ such that $z\in D_h(w,r+\frac \varepsilon4)$.
Let  $u$  be a point which is in the intersection of the curve $\lambda'$ and
the hyperbolic  half--radius  of the  disc  $D_h(w,r+\frac \varepsilon 4)$
which contains $z$. Then  $z\in D_h(u,\frac \varepsilon 2)$. Thus   $z\in
\Delta^+_{\frac \varepsilon 2}\lambda'$, what implies
$(\gamma^+_r,\gamma^+_{r+\frac \varepsilon 4}]\subseteq\Delta^+_{\frac \varepsilon 2}\lambda'$.
It follows $\Delta^+_{\frac \varepsilon 2}\lambda'\subseteq (\gamma^+_r,\gamma^+_{r+\frac \varepsilon 4}]$.

All we done for $\gamma^+_r$ may be done also for $\gamma^-_r$. The resulting
curve will be denoted by $\lambda''$.

In some of the sub--domains $(\gamma^+_r,\gamma^+_{r+\frac \varepsilon 4}]$
and $(\gamma^-_r,\gamma^-_{r+\frac \varepsilon 4}]$  there exist a sequence
of points along which the function $f$ does not have the boundary value, or
if there    exist,     then it is not equals to $c$. This sequence   may be
connected by a curve   $\gamma_2$ which lies in the same sub--domain as the
sequence. Now we may take  $\gamma_1=\lambda'$ or $\gamma_1=\lambda''$ what
depends     on which sub--domain contains the curve $\gamma_2$. It is clear
that the hyperbolic distance  between    the curves          $\gamma_2$ and
$\gamma_1$ is  less then $\varepsilon$.
\end{proof}

\begin{theorem}\label{TH4}
Let      $f:\mathbb{D}\rightarrow\overline{\mathbb{C}}$ be any function in
$\mathbb{D}$ and  let a curve $\gamma\subseteq \mathbb{D}$ terminates in a
point $e^{i\theta}\in\Gamma$. If $C(f,\Delta_r\gamma,e^{i\theta}) = \{c\}$
for some $r\in (0,\infty)$  and  if         the function $f$ does not have
$\Delta_\gamma-$boundary value in the point $e^{i\theta}$, then there exist
$\gamma_1\in[\gamma]$ such that  for every   $\varepsilon>0$ there exist a
curve $\gamma_\varepsilon$  at the  distance less then $\varepsilon$  from
the curve $\gamma_1$,  such   that along  one curve $f$ has the asymptotic
boundary value $c$, and along  the other does not.
\end{theorem}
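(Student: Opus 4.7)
I will adapt the proof of Theorem~\ref{TH3} to extract a \emph{single} curve $\gamma_1\in[\gamma]$ that works for every $\varepsilon>0$ simultaneously. First, since $\gamma\subseteq\Delta_r\gamma$ and $C(f,\Delta_r\gamma,e^{i\theta})=\{c\}$, the cluster set along $\gamma$ itself is $\{c\}$, so $\lim_{\gamma\ni z\to e^{i\theta}}f(z)=c$ and the hypotheses of Theorem~\ref{TH3} are met. I then form
\[
r_1=\sup\{s\in[0,\infty):C(f,\Delta_s\gamma,e^{i\theta})=\{c\}\},
\]
which satisfies $r\le r_1<\infty$: the lower bound comes from the hypothesis, and the upper bound from the absence of a $\Delta_\gamma$-boundary value. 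The candidates for $\gamma_1$ will be the two boundary curves $\gamma_{r_1}^\pm$ of $\Delta_{r_1}\gamma$; each lies at hyperbolic distance $r_1$ from $\gamma$ and thus belongs to $[\gamma]$.

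The side is then chosen according to where failures of the cluster set accumulate. If the supremum is attained, i.e.\ $C(f,\Delta_{r_1}\gamma,e^{i\theta})=\{c\}$, then $f\to c$ along both boundary curves. For every $\eta>0$ bad sequences must lie in $\Delta_{r_1+\eta}\gamma\setminus\Delta_{r_1}\gamma$; if neither outer shell $(\gamma_{r_1}^\pm,\gamma_{r_1+\varepsilon^\pm}^\pm]$ contained them, then taking $\eta=\min(\varepsilon^+,\varepsilon^-)$ would force $C(f,\Delta_{r_1+\eta}\gamma,e^{i\theta})=\{c\}$, contradicting the supremum. Hence one side, say $+$, has bad sequences in every outer $\varepsilon$-shell; I set $\gamma_1=\gamma_{r_1}^+$ and join each bad sequence by a curve $\gamma_\varepsilon\subseteq(\gamma_{r_1}^+,\gamma_{r_1+\varepsilon}^+]$, which lies in $\Delta_\varepsilon\gamma_1$ by the orthogonal projection along hyperbolic geodesics perpendicular to $\Gamma$.

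If the supremum is not attained, I pick any bad sequence $\{z_n\}\subseteq\Delta_{r_1}\gamma$. No $\Delta_s\gamma$ with $s<r_1$ can contain an infinite subsequence, so $d_h(z_n,\gamma)\to r_1$. After passing to a subsequence, the $z_n$ all lie on one side, say $+$, and I set $\gamma_1=\gamma_{r_1}^+$. If $f$ does not tend to $c$ along $\gamma_1$, then $\gamma_\varepsilon=\gamma_{r_1-\varepsilon/2}^+\subseteq\Delta_{r_1-\varepsilon/2}\gamma$ is a good curve at hyperbolic distance $\varepsilon/2$ from $\gamma_1$. Otherwise $f\to c$ along $\gamma_1$, and then the (all but finitely many) terms $z_n$ with $r_1-d_h(z_n,\gamma)<\varepsilon$ lie in $(\gamma_{r_1-\varepsilon}^+,\gamma_{r_1}^+)$ and can be joined by a bad curve $\gamma_\varepsilon$ in this region, still at hyperbolic distance less than $\varepsilon$ from $\gamma_1$.

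The main obstacle is the case analysis needed to fix a single side of $\gamma$ that works for all $\varepsilon$ simultaneously: the curve $\lambda'$ constructed in Theorem~\ref{TH3} depends on $\varepsilon$, whereas here the supremum $r_1$ is intrinsic to $f$ and $\gamma$, and the key point is that the boundary curve $\gamma_{r_1}^\pm$ captures the failure of cluster sets uniformly. The underlying geometric distance estimates are the same as those in the proof of Theorem~\ref{TH3}.
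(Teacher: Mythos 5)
Your proposal is correct (at the same level of geometric rigor the paper itself employs in Theorem \ref{TH3}), and it is built on the same skeleton: the critical value $r_1=\sup\{s:C(f,\Delta_s\gamma,e^{i\theta})=\{c\}\}$, the boundary curves $\gamma_{r_1}^{\pm}$, and the shell decomposition $\Delta_{r_1+\eta}\gamma\setminus\Delta_{r_1}\gamma$. The difference is that you actually prove the point of Theorem \ref{TH4}, namely the $\varepsilon$-uniformity of $\gamma_1$, whereas the paper's own proof consists of a reference back to Theorem \ref{TH3} together with the suggestion $\gamma_1=\lambda'$, $\gamma_\varepsilon=\lambda''$; that suggestion is problematic, since both $\lambda'$ and $\lambda''$ depend on $\varepsilon$ and both lie inside $\Delta_{r_1}\gamma$, so $f\to c$ along each of them and neither can serve as the ``bad'' curve. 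Your dichotomy (supremum attained versus not, then fixing the side on which bad sequences accumulate, using that the sets of shell-widths containing bad sequences are upward closed) is exactly the missing content, and the second case correctly exploits that $d_h(z_n,\gamma)\to r_1$ forces a single bad sequence to lie, for every $\varepsilon$, in the inner $\varepsilon$-shell adjacent to $\gamma_{r_1}^{+}$. The only caveats are minor and shared with the paper: the claim that a point of the shell of width $\varepsilon$ is within hyperbolic distance $\varepsilon$ of the boundary curve $\gamma_{r_1}^{+}$ (and that $\gamma_{r_1-\varepsilon/2}^{+}$ is within $\varepsilon/2$ of $\gamma_{r_1}^{+}$) is asserted rather than proved, just as the analogous inclusion $(\gamma_r^{+},\gamma_{r+\varepsilon/4}^{+}]\subseteq\Delta_{\varepsilon/2}^{+}\lambda'$ is in the paper; and when splitting a bad sequence among the two shells one should first pass to a subsequence along which $f$ actually converges to some $a\ne c$, so that every infinite subsequence remains bad. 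Neither caveat affects the argument.
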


\begin{proof}
That this theorem holds one may see from the proof of Theorem \ref{TH3}.
One may also chose $\gamma_1=\lambda'$ and $\gamma_\varepsilon=\lambda''$.
\end{proof}

\section{Normality of meromorphic functions along simple curves}
With $N$ we denote the  class      of all normal meromorphic functions in the
disk $\mathbb{D}$    (for                properties of this class we refer to
\cite{LEHTO.VIRTANEN.ACTA},         \cite{LOHWATER},       \cite{MONTEL.BOOK},
\cite{SCHIFF.BOOK}, \cite{BAGEMIHL.SEIDEL.AASF},   \cite{GAVRILOV.SBORNIK.67},
\cite{GAUTHIER.NAGOYA}, \cite{LAPPAN.MATH.Z}, \cite{VAISALA.AASF},
\cite{LAVERDE}).    For    a meromorphic function    $f:\mathbb{D}\rightarrow
\overline{\mathbb{C}}$ we denote with
\begin{equation*}
f^{\sharp}(z) = \frac{|f'(z)|}{ 1+|f(z)|^2},\quad z\in\mathbb{D}
\end{equation*}
the spherical derivate for $f$. The function $f^\sharp:\mathbb{D}\rightarrow
\mathbb{R}$ is continuous in $\mathbb{D}$. The spherical derivate $f^\sharp$
may be used  to  define the spherical distance     between      points in the
target domain of   $f:\mathbb{D}\rightarrow\overline{\mathbb{C}}$.  Ostrowski
\cite{OSTROWSKI.MATH.Z}  was the first who used the spherical distance, i.e.,
the  spherical derivate in the   consideration    related   with meromorphic
functions. Lehto and Virtanen \cite{LEHTO.VIRTANEN.ACTA}   used the   Marty
criterium (see \cite{HAYMAN.BOOK}) in order to derive   that  a  meromorphic
function belongs to the class $N$ if and only if
\begin{equation*}
\sup_{z\in\mathbb{D}} (1-|z|^2) f^\sharp (z)<\infty.
\end{equation*}

By   using   the Marty  criterium for normality of a family of meromorphic
functions  it is a routine to prove the following

\begin{theorem}\label{TH5}
Let a curve $\gamma\subseteq\mathbb{D}$ has one endpoint in $e^{i\theta}\in
\Gamma$ and let $f:\mathbb{D}\rightarrow\overline{\mathbb{C}}$         be a
meromorphic function.   For every $r\in (0,1)$ the following conditions are
equivalent:
\begin{enumerate}
\item $f$ is normal in $D (r)$  along the   curve $\gamma$;
\item $\sup_{z\in\Delta_r\gamma  } (1-|z|^2) f^\sharp (z)<\infty.$
\end{enumerate}
\end{theorem}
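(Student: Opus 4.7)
The plan is to reduce the stated equivalence to Marty's criterion, via the Möbius--invariance of the expression $(1-|z|^2)f^\sharp(z)$. The first step is a chain-rule computation for $f_w=f\circ\varphi_w$,
$$f_w^\sharp(z) = f^\sharp(\varphi_w(z))\,|\varphi_w'(z)|,$$
combined with the Schwarz--Pick identity $(1-|z|^2)|\varphi_w'(z)| = 1-|\varphi_w(z)|^2$ to yield the key invariance relation
$$(1-|z|^2)\,f_w^\sharp(z) = (1-|\varphi_w(z)|^2)\,f^\sharp(\varphi_w(z)),\qquad z,w\in\mathbb{D}.$$

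Next, I would use Lemma \ref{LE1}: since $\Delta_r\gamma=\bigcup_{w\in\gamma}\varphi_w(\overline{D(r)})$, as $(w,z)$ ranges over $\gamma\times \overline{D(r)}$ the point $u=\varphi_w(z)$ sweeps out exactly $\Delta_r\gamma$. Substituting into the invariance identity and taking suprema yields
$$\sup_{u\in\Delta_r\gamma}(1-|u|^2)\,f^\sharp(u) = \sup_{w\in\gamma}\sup_{z\in\overline{D(r)}}(1-|z|^2)\,f_w^\sharp(z).$$
Because $1-r^2\le 1-|z|^2 \le 1$ on $\overline{D(r)}$, condition (2) is equivalent to the uniform spherical-derivative bound $\sup_{w\in\gamma,\,z\in\overline{D(r)}}f_w^\sharp(z)<\infty$.

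The third step is Marty's criterion for the family $\{f_w:w\in\gamma\}$ on $D(r)$: normality in $D(r)$ is equivalent to local uniform boundedness of the spherical derivatives on compact subsets of $D(r)$. The displayed uniform bound on $\overline{D(r)}$ implies (a fortiori) such local boundedness, giving $(2)\Rightarrow(1)$; for the converse, the invariance identity transports the local Marty bound on compacta of $D(r)$ back into the desired bound on $\Delta_r\gamma$. The heart of the argument is the one-line Schwarz--Pick computation; the only mildly delicate point is reconciling the closed disc $\overline{D(r)}$ appearing through $\Delta_r\gamma$ with the open disc $D(r)$ in the Montel normality notion, which is handled by observing that $\overline{D(r)}$ sits as a compact subset of $\mathbb{D}$ and by exhausting $\Delta_r\gamma$ by slightly smaller analogues if needed.
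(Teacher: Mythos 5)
Your proof is correct and is exactly the argument the paper has in mind: the paper omits the proof of Theorem \ref{TH5} entirely, saying only that it is ``routine'' via Marty's criterion, and your Schwarz--Pick invariance identity $(1-|z|^2)f_w^\sharp(z)=(1-|\varphi_w(z)|^2)f^\sharp(\varphi_w(z))$ combined with the decomposition of $\Delta_r\gamma$ from Lemma \ref{LE1} is the standard way to carry that out. The open-versus-closed disc mismatch you flag in the direction $(1)\Rightarrow(2)$ is an imprecision in the paper's own formulation (its $\Delta_r\gamma$ is built from closed discs while normality is posed on the open disc $D(r)$), not a defect of your argument.
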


From Theorem \ref{TH5} we have

\begin{theorem}[see Theorem 1 in \cite{SUSIC.PAVICEVIC.BALCANICA}]\label{TH6}
Let     a   curve $\gamma\subseteq\mathbb{D}$ terminates in $e^{i\theta}\in\Gamma$
and let $f:\mathbb{D} \rightarrow\overline{ \mathbb{C}}$ be a meromorphic function.  The following
two  conditions are equivalent:
\begin{enumerate}
\item  $f$ is  normal  in $\mathbb{D}$ along $\gamma$;
\item  $\sup_{z\in\Delta_r\gamma} (1-|z|^2) f^\sharp (z)<\infty$      for all
$r\in (0,1)$.
\end{enumerate}
\end{theorem}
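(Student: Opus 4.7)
The plan is to reduce Theorem~\ref{TH6} directly to Theorem~\ref{TH5} via the exhaustion $\mathbb{D}=\bigcup_{r\in(0,1)}D(r)$, using that normality of a family of meromorphic functions is a local property characterized by Marty's criterion.

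For the implication $(1)\Rightarrow(2)$, I would argue that if $f$ is normal along $\gamma$ in $\mathbb{D}$ (Definition~\ref{DEF5}), the family $\{f_w=f\circ\varphi_w:w\in\gamma\}$ is normal in every subdomain of $\mathbb{D}$, in particular in $D(r)$ for each $r\in(0,1)$. By Definition~\ref{DEF6} this says $f$ is normal along $\gamma$ in $D(r)$, and a single application of Theorem~\ref{TH5} for that $r$ yields the required bound $\sup_{z\in\Delta_r\gamma}(1-|z|^2)f^{\sharp}(z)<\infty$. Since $r\in(0,1)$ was arbitrary, (2) follows.

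For the implication $(2)\Rightarrow(1)$, assume the bound in (2) holds for every $r\in(0,1)$. Theorem~\ref{TH5} then gives normality of $\{f_w:w\in\gamma\}$ in every open disc $D(r)$. The key step is to promote normality on each $D(r)$ to normality on the whole $\mathbb{D}$. I would invoke Marty's criterion: any compact $K\subset\mathbb{D}$ lies in some $D(r_0)$ with $r_0<1$, and normality on $D(r_0)$ provides a uniform bound on $\sup_{w\in\gamma}\sup_{z\in K}f_w^{\sharp}(z)$. Having such bounds on every compact subset of $\mathbb{D}$ is precisely Marty's criterion for normality of $\{f_w\}$ on $\mathbb{D}$, i.e., condition (1).

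Since the argument is simply an exhaustion by concentric subdiscs together with one citation of Theorem~\ref{TH5}, no substantive obstacle is expected; this matches the paper's framing of the result as a direct consequence (``From Theorem~\ref{TH5} we have''). The only small point to check is that the sup in (2) over $\Delta_r\gamma=\bigcup_{w\in\gamma}\varphi_w(\overline{D}(r))$ — which includes the circle $|z|=r$ under $\varphi_w$ — translates correctly into a Marty-type bound on the compact subset $\overline{D}(r)$ in the $z$-variable via the identity $(1-|z|^2)f_w^{\sharp}(z)=(1-|\varphi_w(z)|^2)f^{\sharp}(\varphi_w(z))$; but this is exactly the identity already used in Theorem~\ref{TH5} and requires no new work.
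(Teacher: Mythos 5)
Your proposal is correct and follows exactly the route the paper intends: the paper gives no written proof beyond the phrase ``From Theorem \ref{TH5} we have,'' and your argument simply fills in that reduction via the exhaustion $\mathbb{D}=\bigcup_{r\in(0,1)}D(r)$ together with Marty's criterion to pass from normality on each $D(r)$ to normality on $\mathbb{D}$. Both directions are sound, so nothing further is needed.
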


\begin{theorem}\label{TH7}
A   meromorphic  function  $f : \mathbb{D}\rightarrow\overline{\mathbb{C}}$ is
normal along a  curve $\gamma$ if and only if it is normal along any curve
$\gamma_1\in [\gamma ]$.
\end{theorem}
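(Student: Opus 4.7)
The plan is to reduce the claim to the integral-free analytic characterization supplied by Theorem~\ref{TH6} and then exploit the inclusion property of curvilinear angles for equivalent curves that was isolated as the ``assertion'' in the statement of Lemma~\ref{LE2}. Since $\sim$ is an equivalence relation, it suffices to prove only one implication: if $f$ is normal along $\gamma$ and $\gamma_1\in [\gamma]$, then $f$ is normal along $\gamma_1$; the reverse implication then follows by symmetry with the roles of $\gamma$ and $\gamma_1$ swapped.

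By Theorem~\ref{TH6}, the normality of $f$ along $\gamma$ is equivalent to
\begin{equation*}
\sup_{z\in \Delta_r\gamma}(1-|z|^2)f^{\sharp}(z)<\infty\quad\text{for every } r\in (0,1),
\end{equation*}
and the same characterization applies to $\gamma_1$. Thus I only need to show that finiteness of the above supremum over every $\Delta_r\gamma$ forces the analogous finiteness over every $\Delta_{r_1}\gamma_1$.

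Fix an arbitrary $r_1\in (0,1)$, and pass to the hyperbolic parametrization $r_1'\in(0,\infty)$ via \eqref{EQ1}, so that $\Delta_{r_1}\gamma_1=\Delta_{r_1'}\gamma_1$. Because $\gamma_1\sim\gamma$, the assertion established inside the proof of Lemma~\ref{LE2} provides some $r_2'\in (0,\infty)$ with $\Delta_{r_1'}\gamma_1\subseteq \Delta_{r_2'}\gamma$. Translating back via \eqref{EQ1} yields $r_2\in(0,1)$ such that $\Delta_{r_1}\gamma_1\subseteq \Delta_{r_2}\gamma$. Consequently
\begin{equation*}
\sup_{z\in \Delta_{r_1}\gamma_1}(1-|z|^2)f^{\sharp}(z)\le \sup_{z\in\Delta_{r_2}\gamma}(1-|z|^2)f^{\sharp}(z)<\infty,
\end{equation*}
where the last inequality uses the hypothesis through Theorem~\ref{TH6}. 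Since $r_1\in(0,1)$ was arbitrary, another application of Theorem~\ref{TH6} (in the reverse direction) gives the normality of $f$ along $\gamma_1$.

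The only delicate point is the bookkeeping between the pseudo-hyperbolic radius used in Definition~\ref{DEF1} and the hyperbolic radius used in the assertion of Lemma~\ref{LE2}; this is handled uniformly by \eqref{EQ1}, so no genuine obstacle arises. All the substantive content has already been stored in Theorem~\ref{TH6} (the spherical-derivative test) and in Lemma~\ref{LE2} (the mutual inclusion of curvilinear angles for equivalent curves); the present theorem amounts to chaining these two facts.
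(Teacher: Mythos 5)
Your proposal is correct and is precisely the argument the paper intends: its entire proof of Theorem~\ref{TH7} is the one-line remark that it ``follows from Theorem~\ref{TH6} and Lemma~\ref{LE2},'' and you have simply written out that chain (spherical-derivative characterization over $\Delta_r\gamma$ plus the mutual-inclusion assertion for curvilinear angles of equivalent curves) in full detail.
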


\begin{proof}
This theorem follows from Theorem \ref{TH6} and Lemma \ref{LE2}.
\end{proof}

Theorem \ref{TH7} gives an opportunity to introduce a notation of normality
of meromorphic functions  along classes  of simple curves.

\begin{definition}\label{DEF7}
A meromorphic function $f:\mathbb{D}\rightarrow\overline{\mathbb{C}}$ is normal
along  a class   $[\gamma]$   if it is normal along any curve  $\gamma_1\in
[\gamma]$.
\end{definition}

\begin{remark}
From Theorems \ref{TH6} and \ref{TH7}  it follows that the class of normal
meromorphic functions in the  disc $\mathbb{D}$ investigated  by Lehto and
Virtanen \cite{LEHTO.VIRTANEN.ACTA},  i.e., functions from  $N$, is normal
along      any simple curve $\gamma$ in the disc $\mathbb{D}$, i.e., it is
normal along  any class $[\gamma]$  in  the disc $\mathbb{D}$.  If a curve
$\gamma$  lies  in   some Stolz     angle of $\mathbb{D}$ with vertex   at
$e^{i\theta}$, then the      notation of normality along that curve, i.e.,
along the class $[\gamma]$ of a function $f :\mathbb{D}\rightarrow\overline{\mathbb{C}}$
is equivalent with normality along   the hyperbolic semigroup of all M\"{o}bius
transformations of $\mathbb{D}$ with an attractive point   $e^{i\theta}$
($\pm e^{i\theta}$ are attractors of elements of the semigroup).   If  a
curve belongs to the sub--domain of  the disc $\mathbb{D}$    which is bounded
by two horocycles which contain $e^{i\theta}$, then   the normality of meromorphic
functions along the curve $\gamma$,  i.e., along the   class $[\gamma]$ is
the same as  the normality along the parabolic semigroup of all M\"{o}bius
transformations of the disc     $\mathbb{ D}$   with only  one  attractive
point $e^{i\theta}$. Normality and boundary behavior of meromorphic functions
along the hyperbolic   and parabolic semigroup and hyperbolic and parabolic
subgroup are considered in \cite{GAVRILOV.BURKOVA.DOKL}, \cite{LOVSHINA.DOKL}, \cite{PAVICEVIC.SUSIC.DOKL},
\cite{GAVRILOV.SBORNIK.67} and \cite{PAVICEVIC.NEW.ZELAND}.
\end{remark}

In  \cite{GAVRILOV.SBORNIK.67}  and     \cite{GAVRILOV.SBORNIK.71}  Gavrilov
considered the   normality and boundary behavior of     meromorphic function
using the notation of $P-$sequences.

\begin{definition}[see \cite{GAVRILOV.SBORNIK.67}]\label{DEF8}  A  sequence
$\{z_n \}\subseteq \mathbb{D},\, \lim_{n\rightarrow \infty}|z_n|=1$    is a
$P-$sequence   for a meromorphic function $f:\mathbb{D}\rightarrow\overline
{\mathbb{C}}$    if for  any subsequence $\{z_{n_k}\}$ and     $\varepsilon
\in(0,1)$ the function $f$  achieves   in the set $\bigcup _{k\in\mathbb{N}}
D_h(z_{n_k}, \varepsilon)$                infinity many times all values in
$\overline{\mathbb{C}}$ except  possibly two.
\end{definition}

From  the definition it follows that any subsequence of $P-$sequence is also
a $P-$sequence.

Gavrilov     (see   Theorem 3 in \cite{GAVRILOV.SBORNIK.67})   showed that a
meromorphic function $f$ is normal  in the disc $\mathbb{D}$, i.e., $f\in N$
is and only  if    $f$   in  $\mathbb{D}$ does not  have $P-$sequences.

\begin{theorem}[see Theorem    1 in \cite{GAVRILOV.SBORNIK.71}]\label{TH8} If
for a sequence $\{z_n\}\subseteq\mathbb{D},\ \lim_{n\rightarrow\infty}  |z_n|
=1$ and a meromorphic function $f:\mathbb{D}\rightarrow\overline{\mathbb{C}}$
hold
\begin{equation*}
\lim_{n\rightarrow\infty} (1 - |z_n|^2) f^\sharp(z) =  \infty,
\end{equation*}
then  $\{z_n\}$ is  a $P-$sequence for $f$.
\end{theorem}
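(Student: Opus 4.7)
The plan is to transport the hypothesis to the origin via Möbius conjugation and derive a contradiction from Marty's criterion together with Montel's three-value normality theorem. Set $f_n = f\circ\varphi_{z_n}$. The chain rule for the spherical derivative combined with $|\varphi_{z_n}'(0)| = 1-|z_n|^2$ gives
\begin{equation*}
f_n^\sharp(0) = (1-|z_n|^2)\, f^\sharp(z_n),
\end{equation*}
so the hypothesis becomes $f_n^\sharp(0)\to\infty$, a property inherited by every subsequence.

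Next I would argue by contradiction. If $\{z_n\}$ is not a $P$-sequence, then by Definition \ref{DEF8} there exist a subsequence $\{z_{n_k}\}$, a radius $\varepsilon\in(0,1)$, and three pairwise distinct values $a,b,c\in\overline{\mathbb{C}}$ each attained by $f$ only finitely many times on $U:=\bigcup_{k}D_h(z_{n_k},\varepsilon)$. Since $\varphi_{z_{n_k}}$ is a hyperbolic isometry sending $0$ to $z_{n_k}$, it maps $D_h(0,\varepsilon)$ onto $D_h(z_{n_k},\varepsilon)$; hence the values taken by $f_{n_k}$ on $D_h(0,\varepsilon)$ are exactly those taken by $f$ on $D_h(z_{n_k},\varepsilon)$.

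Because $|z_{n_k}|\to 1$, any fixed $w\in\mathbb{D}$ satisfies $d_h(w,z_{n_k})\to\infty$, so $w$ belongs to $D_h(z_{n_k},\varepsilon)$ for only finitely many $k$. Since there are only finitely many preimages of $a$, $b$, $c$ in $U$ altogether, after discarding finitely many indices I obtain a further subsequence (still denoted $\{f_{n_k}\}$) on which none of $a,b,c$ is attained by $f_{n_k}$ on $D_h(0,\varepsilon)$. Montel's theorem then forces $\{f_{n_k}\}$ to be normal on $D_h(0,\varepsilon)$, and Marty's criterion yields a uniform bound on $f_{n_k}^\sharp$ on compact subsets, in particular at $0$. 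This contradicts $f_{n_k}^\sharp(0)\to\infty$.

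The main obstacle will be the bookkeeping in that last passage to a subsequence: one must carefully convert the ``finitely many times in the union'' condition of Definition \ref{DEF8} into the clean statement that $f_{n_k}$ omits three fixed values on a fixed disc about the origin for all sufficiently large $k$. Once that reduction is in place, the remainder of the argument is a direct application of the Montel and Marty criteria that the paper has already invoked in its earlier discussion of normal families.
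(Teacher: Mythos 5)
Your argument is correct. Note that the paper itself gives no proof of this statement --- it is quoted (with a typo, $f^\sharp(z)$ in place of $f^\sharp(z_n)$) from Theorem 1 of \cite{GAVRILOV.SBORNIK.71} --- so there is nothing internal to compare against; your route (conjugate to the origin so the hypothesis reads $f_{n}^\sharp(0)\to\infty$, negate Definition \ref{DEF8} to get a subsequence, a radius $\varepsilon$, and three values attained only finitely often in the union of hyperbolic discs, then apply the Montel--Carath\'eodory theorem and Marty's criterion) is the standard proof and is complete as sketched. The bookkeeping step you flag goes through exactly as you indicate: each of the finitely many preimages of $a$, $b$, $c$ lies in $D_h(z_{n_k},\varepsilon)$ for only finitely many $k$ because $d_h(w,z_{n_k})\to\infty$ for fixed $w$, so beyond some index $f_{n_k}$ omits all three values on $D_h(0,\varepsilon)$, and normality there bounds $f_{n_k}^\sharp(0)$, giving the contradiction.
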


The example of the meromorphic function
\begin{equation*}
f(z ) =   \exp\left \{ - \exp \left\{ \frac 1{1-z}\right\}\right\}
\end{equation*}
from \cite{GAVRILOV.SBORNIK.67} shows that the reverse       implication in
Theorem \ref{TH8} does not hold.

\begin{theorem}[see Theorem 3 in \cite{GAVRILOV.SBORNIK.71}]\label{TH9}   A
sequence  $\{z_n\}\subseteq \mathbb{D},\, \lim_{n\rightarrow\infty}|z_n|=1$
is a  $P-$sequence     for a meromorphic  function $f:\mathbb{D}\rightarrow
\overline{\mathbb{C}}$  if and only   if there exist a sequence of positive
numbers  $\{r_n\},\, \lim_{n\rightarrow \infty}r_n=0$ such that
\begin{equation*}
\lim_{n\rightarrow\infty} \left\{\sup_{z\in D_h(z_n,r_n)} (1-|z|^2)f^\sharp (z)\right\} =\infty.
\end{equation*}
\end{theorem}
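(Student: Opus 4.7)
The plan is to exploit the M\"{o}bius-invariant identity $(1-|z|^2)f^\sharp(z) = (1-|w|^2)f_{z_n}^\sharp(w)$ for $z = \varphi_{z_n}(w)$. Denoting $M(n,r) := \sup_{z \in D_h(z_n,r)}(1-|z|^2)f^\sharp(z)$, this identity rewrites $M(n,r) = \sup_{w \in D_h(0,r)}(1-|w|^2)f_{z_n}^\sharp(w)$. On any fixed closed hyperbolic disc $\overline{D_h(0,r)}$ the factor $(1-|w|^2)$ is bounded between two positive constants, so $M(n,r)$ is bounded in $n$ if and only if $\{f_{z_n}^\sharp\}$ is uniformly bounded on $\overline{D_h(0,r)}$; by Marty's criterion this is in turn equivalent to normality of $\{f_{z_n}\}$ on $D_h(0,r)$, which connects the theorem's hypothesis directly to normal families.

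For the reverse direction, given $r_n \to 0$ with $M(n,r_n) \to \infty$, one selects $\tilde z_n \in \overline{D_h(z_n,r_n)}$ with $(1-|\tilde z_n|^2)f^\sharp(\tilde z_n) \geq \tfrac{1}{2} M(n,r_n)$; then $(1-|\tilde z_n|^2)f^\sharp(\tilde z_n) \to \infty$, so by Theorem \ref{TH8} the sequence $\{\tilde z_n\}$ is a $P$-sequence for $f$. Since $d_h(\tilde z_n, z_n) \leq r_n \to 0$, for any subsequence $\{z_{n_k}\}$ and any $\varepsilon > 0$ the inclusion $D_h(\tilde z_{n_k}, \varepsilon/2) \subseteq D_h(z_{n_k}, \varepsilon)$ holds for all large $k$, and this transfers the $P$-property from $\{\tilde z_n\}$ to $\{z_n\}$ (the excised initial terms alter the union only by finitely many discs).

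For the forward direction, the key claim is that if $\{z_n\}$ is a $P$-sequence, then $\lim_{n\to\infty} M(n, 1/k) = \infty$ for every fixed integer $k \geq 1$. If this failed, a subsequence $\{z_{n_j}\}$ would satisfy $M(n_j, 1/k) \leq C$, so by the setup above $\{f_{z_{n_j}}\}$ would be normal on $D_h(0, 1/k)$. Pass to a sub-subsequence $\{f_{z_{m_l}}\}$ converging spherically, uniformly on some $\overline{D_h(0,\delta)}$ with $\delta < 1/k$, to either a meromorphic function $g$ or the constant $\infty$. If $g \equiv \infty$, then $|f_{z_{m_l}}| > 1$ on $\overline{D_h(0,\delta)}$ for large $l$, so $f$ omits the set $\{|w| \leq 1\}$ on $\bigcup_{l \geq L} D_h(z_{m_l}, \delta)$. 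If $g$ is meromorphic, shrink $\delta$ so that $g(\overline{D_h(0,\delta)})$ lies in a small spherical disc, which omits an open set $B \subset \overline{\mathbb{C}}$ with more than two points; uniform spherical convergence then makes $f_{z_{m_l}}$ omit a slightly smaller $B'$ on $\overline{D_h(0,\delta)}$ for large $l$. In either case the subsequence $\{z_{m_l}\}$ (still a $P$-sequence as a subsequence of $\{z_n\}$) contradicts its own $P$-sequence property with $\varepsilon = \delta$, establishing the claim.

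Granted the claim, a diagonal extraction finishes the proof: choose integers $N_1 < N_2 < \cdots$ with $M(n, 1/k) \geq k$ for all $n \geq N_k$, and set $r_n := 1/k$ when $N_k \leq n < N_{k+1}$; then $r_n \to 0$ and $M(n, r_n) \geq k \to \infty$. The main obstacle is precisely the key claim: non-normality only easily yields a \emph{subsequence} along which $M(\cdot, 1/k)$ is unbounded, whereas the diagonal extraction needs blow-up along the \emph{entire} sequence. The resolution is to run the Marty plus Hurwitz-type argument against \emph{any} hypothetical bounded subsequence, exploiting that subsequences of $P$-sequences remain $P$-sequences.
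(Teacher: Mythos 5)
The paper does not prove this statement at all: Theorem \ref{TH9} is imported verbatim as Theorem 3 of Gavrilov \cite{GAVRILOV.SBORNIK.71}, so there is no in-paper argument to compare yours against. On its own terms your proof is correct and uses exactly the tools the paper makes available: the M\"obius invariance $(1-|w|^2)f_{z_n}^\sharp(w)=(1-|\varphi_{z_n}(w)|^2)f^\sharp(\varphi_{z_n}(w))$, Marty's criterion, Theorem \ref{TH8} for the easy direction, and the definition of a $P$-sequence for the hard one. The reverse direction (picking $\tilde z_n$ with $(1-|\tilde z_n|^2)f^\sharp(\tilde z_n)\ge\frac12 M(n,r_n)$, invoking Theorem \ref{TH8}, and transferring the $P$-property back to $\{z_n\}$ via $D_h(\tilde z_{n_k},\varepsilon/2)\subseteq D_h(z_{n_k},\varepsilon)$ for large $k$) is sound, since discarding finitely many discs cannot destroy the ``infinitely many times'' condition. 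The forward direction correctly identifies the real issue --- that non-normality alone only gives blow-up along a subsequence --- and resolves it by running the normal-families contradiction against an arbitrary bounded subsequence, using that every subsequence of a $P$-sequence is again a $P$-sequence; both limit cases ($g$ meromorphic, $g\equiv\infty$) produce an omitted open set of values on $\bigcup_l D_h(z_{m_l},\delta)$, contradicting the $P$-property with $\varepsilon=\delta$. The diagonal extraction then yields $r_n\to 0$ with $M(n,r_n)\to\infty$.

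One small imprecision worth tidying: boundedness of $M(n,r)$ is equivalent to a uniform bound on $f_{z_n}^\sharp$ over the \emph{open} disc $D_h(0,r)$, which by Marty gives normality on $D_h(0,r)$; the converse implication to a uniform bound on the \emph{closed} disc $\overline{D_h(0,r)}$ does not follow from normality on $D_h(0,r)$ alone. This does not damage the argument, because you only ever use the implication ``$M$ bounded $\Rightarrow$ normal'' and then work on the strictly smaller compact $\overline{D_h(0,\delta)}$ with $\delta<1/k$, but the phrase ``equivalent to'' should be weakened accordingly.
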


\begin{theorem}[see Theorem 2 and  Theorem 5 in \cite{GAVRILOV.SBORNIK.67}]\label{TH10}
For  a meromorphic function $f:\mathbb{D}\rightarrow\overline{\mathbb{C}}$
let $\{z_n\}\subseteq\mathbb{D},\ \lim_{n\rightarrow\infty}|z_n| = 1$ be a
sequence which satisfies $\lim_{n\rightarrow\infty}f(z_n)=\alpha\in\overline {\mathbb{C}}$.
Let $\{z_n'\}$ be a new sequence such that along this one the function $f$
does not poses  a limit  $\alpha$ and  $\lim_{n\rightarrow\infty} d_h(z_n,z_n')
=0$.   Then each of   $\{z_n\}$ and $\{z_n'\}$ are $P-$sequences.
\end{theorem}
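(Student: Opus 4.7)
The plan is to apply Gavrilov's characterization of $P$-sequences (Theorem \ref{TH9}) together with an elementary comparison of spherical and hyperbolic arclength along the short geodesic joining $z_n$ to $z_n'$. By compactness of $\overline{\mathbb{C}}$ and the hypothesis that $f(z_n')$ does not tend to $\alpha$, I pass to a subsequence on which $f(z_n')\to\beta$ for some $\beta\in\overline{\mathbb{C}}$ with $\beta\neq\alpha$; the hypotheses $f(z_n)\to\alpha$, $|z_n|\to 1$, $d_h(z_n,z_n')\to 0$ are preserved and now $d_S(f(z_n),f(z_n'))\to d_S(\alpha,\beta)>0$.

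Let $\gamma_n$ denote the hyperbolic geodesic from $z_n$ to $z_n'$ and put $r_n:=d_h(z_n,z_n')$, so $r_n\to 0$ and $\gamma_n\subseteq D_h(z_n,r_n)\cap D_h(z_n',r_n)$. The spherical length of the image $f(\gamma_n)$ bounds $d_S(f(z_n),f(z_n'))$ from above, and factoring the Euclidean line element as $2|dz|=(1-|z|^2)\cdot\frac{2|dz|}{1-|z|^2}$ one finds
\begin{equation*}
d_S(f(z_n),f(z_n'))\le\int_{\gamma_n}2f^\sharp(z)\,|dz|\le\sup_{z\in\gamma_n}\bigl((1-|z|^2)f^\sharp(z)\bigr)\cdot\int_{\gamma_n}\frac{2|dz|}{1-|z|^2}=\sup_{z\in\gamma_n}(1-|z|^2)f^\sharp(z)\cdot r_n,
\end{equation*}
since the last integral is the hyperbolic length of $\gamma_n$, which equals $r_n$. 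Dividing by $r_n$ yields
\begin{equation*}
\sup_{z\in D_h(z_n,r_n)}(1-|z|^2)f^\sharp(z)\ \ge\ \frac{d_S(f(z_n),f(z_n'))}{r_n}\ \longrightarrow\ \infty.
\end{equation*}

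By Theorem \ref{TH9} applied with this choice of $r_n\to 0$, the sequence $\{z_n\}$ is a $P$-sequence; the same estimate with $D_h(z_n,r_n)$ replaced by $D_h(z_n',r_n)$ (which also contains $\gamma_n$) shows that $\{z_n'\}$ is a $P$-sequence. The principal subtlety is the initial choice of subsequence realizing $f(z_n')\to\beta\neq\alpha$; the remainder is the routine arclength comparison above, and then Theorem \ref{TH9} delivers both conclusions at once.
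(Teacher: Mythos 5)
The paper itself gives no argument for Theorem \ref{TH10}; it is quoted from Gavrilov (Theorems 2 and 5 of \cite{GAVRILOV.SBORNIK.67}), so there is no in-paper proof to compare against. Your central estimate is correct and is the standard mechanism here: the chordal distance $d_S(f(z_n),f(z_n'))$ is dominated by the spherical length of $f\circ\gamma_n$, which after inserting the factor $(1-|z|^2)$ is at most $\sup_{z\in\gamma_n}(1-|z|^2)f^{\sharp}(z)$ times the hyperbolic length $r_n$ of $\gamma_n$; dividing by $r_n\to 0$ and invoking Theorem \ref{TH9} is exactly the right way to conclude. (A triviality: $\gamma_n$ lies in the closed discs $\overline{D}_h(z_n,r_n)$ and $\overline{D}_h(z_n',r_n)$ rather than the open ones, since $z_n'$ sits on the boundary of the first; replacing $r_n$ by $2r_n$ removes the issue.)

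The genuine gap is the opening subsequence extraction. By Definition \ref{DEF8}, being a $P$-sequence is a property required of \emph{every} subsequence, so establishing the divergence criterion of Theorem \ref{TH9} along one subsequence $\{z_{n_k}\}$ certifies only that $\{z_{n_k}\}$ is a $P$-sequence; it says nothing about the complementary indices. Under the literal hypothesis that $f$ merely fails to have the limit $\alpha$ along $\{z_n'\}$, there may be a complementary subsequence $\{z_{m_j}\}$ with $f(z_{m_j}')\to\alpha$, along which your lower bound for $\sup (1-|z|^2)f^{\sharp}$ degenerates and yields nothing; your argument therefore proves only that \emph{some} subsequences of $\{z_n\}$ and $\{z_n'\}$ are $P$-sequences, which is strictly weaker than the stated conclusion. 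The repair is to read the hypothesis as Gavrilov intends it, namely that $f(z_n')$ stays away from $\alpha$, i.e. $\liminf_{n} d_S(f(z_n'),\alpha)>0$ (in particular when $f(z_n')\to\beta\neq\alpha$): then the triangle inequality gives $d_S(f(z_n),f(z_n'))\ge \delta/2$ for all large $n$ with no extraction, your inequality holds along the full sequences, and Theorem \ref{TH9} delivers the conclusion for both $\{z_n\}$ and $\{z_n'\}$. You should either adopt that reading explicitly or acknowledge that under the weaker reading only the subsequence statement follows.
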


\begin{theorem}\label{TH11}
Let a curve $\gamma\subseteq \mathbb{D}$ terminates in $e^{i\theta}\in\Gamma$.
A     meromorphic function  $f:\mathbb{D}\rightarrow\overline{\mathbb{C}}$ is
normal  along a  curve   $\gamma$ in $D(r)$,  where $r\in (0,1)$ is fixed, if
and       only if the function $f$ does not have a $P-$sequenece in $\Delta_r
\gamma$.
\end{theorem}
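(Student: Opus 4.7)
The plan is to prove the equivalence by contrapositive in both directions, with Theorem \ref{TH5} as the bridge: it identifies normality of $f$ in $D(r)$ along $\gamma$ with finiteness of $M:=\sup_{z\in\Delta_r\gamma}(1-|z|^2)f^\sharp(z)$, while Theorems \ref{TH8} and \ref{TH9} translate between the behavior of $(1-|z|^2)f^\sharp(z)$ and the existence of $P$-sequences.

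For the direction ``no $P$-sequence in $\Delta_r\gamma$ implies normality,'' I would argue the contrapositive: assume $f$ is not normal in $D(r)$ along $\gamma$. Theorem \ref{TH5} then yields $M=\infty$, so one selects $z_n\in\Delta_r\gamma$ with $(1-|z_n|^2)f^\sharp(z_n)\to\infty$. Since $f^\sharp$ is continuous on $\mathbb{D}$ and therefore bounded on every compact subset, $|z_n|\to 1$ after passing to a subsequence. Theorem \ref{TH8} then asserts that $\{z_n\}$ is a $P$-sequence for $f$, lying inside $\Delta_r\gamma$ by construction.

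For the converse direction I again argue the contrapositive: given a $P$-sequence $\{z_n\}\subseteq\Delta_r\gamma$ for $f$, the aim is to deduce $M=\infty$ and invoke Theorem \ref{TH5}. Theorem \ref{TH9} supplies radii $r_n\downarrow 0$ and witness points $z'_n\in D_h(z_n,r_n)$ with $(1-|z'_n|^2)f^\sharp(z'_n)\to\infty$. Setting $r'=\log\tfrac{1+r}{1-r}$ and choosing $w_n\in\gamma$ with $d_h(z_n,w_n)\le r'$, I would split into cases on $\liminf d_h(z_n,\gamma)$. If this liminf is strictly less than $r'$, a subsequence lies in $\Delta_{r_1}\gamma$ for some fixed $r_1<r$, and then $D_h(z_n,r_n)\subseteq\Delta_r\gamma$ for large $n$, forcing $z'_n\in\Delta_r\gamma$ and $M=\infty$. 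The remaining case $d_h(z_n,\gamma)\to r'$ requires a perturbation: shift $z_n$ along the hyperbolic geodesic toward $w_n$ by an amount $\eta_n\to 0$ to obtain $\tilde z_n$ strictly interior to $\Delta_r\gamma$; since $d_h(\tilde z_n,z_n)\to 0$, the triangle inequality together with the sufficiency part of Theorem \ref{TH9} shows that $\{\tilde z_n\}$ is still a $P$-sequence, so the first case applies to it.

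The principal obstacle is exactly this boundary subcase: the witness points supplied by Theorem \ref{TH9} may lie just outside $\Delta_r\gamma$, and the perturbation size $\eta_n$ must be balanced against the $P$-sequence radii $r_n$ so that the enlarged balls $D_h(\tilde z_n,r_n+\eta_n)$ sit inside $\Delta_r\gamma$ while the Marty divergence persists. Choosing $\eta_n$ small enough to retain the $P$-sequence wildness yet large enough to absorb $r_n$ is the only genuinely technical step of the proof; once that is done, Theorem \ref{TH5} converts $M=\infty$ into the failure of normality in $D(r)$ along $\gamma$ and the equivalence is complete.
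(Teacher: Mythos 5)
The paper does not actually supply a proof of Theorem \ref{TH11}: it only remarks that one can argue ``in the similar way as'' the proof of Theorem \ref{TH12}, which is given in \cite{SUSIC.PAVICEVIC.BALCANICA}, so your proposal must be judged on its own merits. Your first direction is sound: if $f$ is not normal in $D(r)$ along $\gamma$, Theorem \ref{TH5} yields $z_n\in\Delta_r\gamma$ with $(1-|z_n|^2)f^\sharp(z_n)\to\infty$, continuity of $f^\sharp$ on compacta forces $|z_n|\to1$, and Theorem \ref{TH8} makes $\{z_n\}$ a $P$-sequence lying in $\Delta_r\gamma$.

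The converse direction contains a genuine gap which you have correctly located but whose proposed repair does not work. Suppose $z_n$ sits on the outer boundary of $\Delta_r\gamma$, so $d_h(z_n,w_n)=r'$ for the nearest $w_n\in\gamma$. Shifting $z_n$ toward $w_n$ by $\eta_n$ gives $\tilde z_n$ with $d_h(\tilde z_n,w_n)=r'-\eta_n$, so the largest hyperbolic ball about $\tilde z_n$ guaranteed to lie in $\overline{D}_h(w_n,r')\subseteq\Delta_r\gamma$ has radius exactly $\eta_n$; your requirement $D_h(\tilde z_n,r_n+\eta_n)\subseteq\Delta_r\gamma$ therefore forces $r_n+\eta_n\le\eta_n$, i.e.\ $r_n\le0$. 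So the original radii from Theorem \ref{TH9} can never be reused, whatever $\eta_n$ is. The only alternative is to apply Theorem \ref{TH9} afresh to $\{\tilde z_n\}$, but the radii $\tilde r_n$ it then produces depend on $\{\tilde z_n\}$ and hence on $\eta_n$, and nothing lets you arrange $\tilde r_n\le\eta_n$ in advance: choosing $\eta_n$ after seeing $\tilde r_n$ changes $\tilde z_n$ and hence $\tilde r_n$ again. This circularity is not a removable technicality. The Marty blow-up attached to a $P$-sequence on $\partial\Delta_r\gamma$ may occur entirely outside $\Delta_r\gamma$ (poles with rapidly decaying residues placed at hyperbolic distance $o(1)$ outside the curvilinear angle, in the style of Example \ref{EX1}, produce a $P$-sequence on the boundary while $\sup_{z\in\Delta_r\gamma}(1-|z|^2)f^\sharp(z)$ can stay finite), so the route through Theorem \ref{TH5} and the pointwise Marty quantity cannot be completed for a single fixed $r$. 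This is precisely why the robust statements (Theorems \ref{TH6} and \ref{TH12}) quantify over all $r\in(0,1)$: for $r''>r$ the overhang of $D_h(z_n,r_n)$ is absorbed into $\Delta_{r''}\gamma$. A proof of the fixed-$r$ statement must treat the boundary of $\Delta_r\gamma$ by other means, for instance by arguing directly from the definition of a $P$-sequence and the normality of the family $\{f_w: w\in\gamma\}$ in $D(r)$, and your sketch does not supply that step.
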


One  can give a proof of Theorem \ref{TH11}   in the similar way as the proof
of the  following

\begin{theorem}[see Theorem 3 in \cite{SUSIC.PAVICEVIC.BALCANICA}]\label{TH12}
A     meromorphic function  $f:\mathbb{D}\rightarrow\overline{\mathbb{C}}$ is
normal along  a simple curve $\gamma\subseteq\mathbb{D}$  which terminates in
$e^{i\theta}\in\Gamma$ if and only if  for all  $r\in (0,1)$ the function $f$
does  not have a $P-$sequence in $\Delta _r\gamma$.
\end{theorem}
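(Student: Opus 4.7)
The plan is to reduce Theorem \ref{TH12} to the characterization of normality along $\gamma$ in terms of the boundedness of the hyperbolic-weighted spherical derivative $(1-|z|^2)f^\sharp(z)$ on every curvilinear angle $\Delta_r\gamma$ (Theorem \ref{TH6}). Once this is in hand, the two $P$-sequence criteria of Gavrilov --- Theorem \ref{TH8}, which produces a $P$-sequence from a blow-up of $(1-|z|^2)f^\sharp$, and Theorem \ref{TH9}, which conversely supplies small hyperbolic discs on which the weighted derivative is unbounded --- will translate normality along $\gamma$ directly into the absence of $P$-sequences in the angles $\Delta_r\gamma$.

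For the implication from normality to the absence of $P$-sequences, I would argue by contradiction. Given a purported $P$-sequence $\{z_n\}\subseteq\Delta_{r_0}\gamma$ for some $r_0\in(0,1)$, apply Theorem \ref{TH9} to obtain positive numbers $r_n\to 0$ with
\begin{equation*}
\sup_{z\in D_h(z_n,r_n)}(1-|z|^2)f^\sharp(z)\longrightarrow\infty.
\end{equation*}
Choose $w_n\in\gamma$ with $z_n\in \overline{D}_h(w_n,r_0')$, where $r_0'$ is the hyperbolic counterpart of $r_0$ via \eqref{EQ1}. The triangle inequality for $d_h$ then yields $D_h(z_n,r_n)\subseteq\Delta_{r_0'+1}\gamma$ for all sufficiently large $n$, so the weighted derivative is unbounded on this (slightly enlarged) curvilinear angle. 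This contradicts Theorem \ref{TH6}.

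For the converse, assume $f$ admits no $P$-sequence in any $\Delta_r\gamma$ and suppose, toward a contradiction, that $f$ is not normal along $\gamma$. By Theorem \ref{TH6} there exist some $r\in(0,1)$ and a sequence $\{z_n\}\subseteq\Delta_r\gamma$ with $(1-|z_n|^2)f^\sharp(z_n)\to\infty$. Since $f^\sharp$ is continuous on $\mathbb{D}$, the quantity $(1-|z|^2)f^\sharp(z)$ is bounded on every compact subset of $\mathbb{D}$, so after extracting a subsequence we may assume $|z_n|\to 1$. Theorem \ref{TH8} then forces $\{z_n\}$ to be a $P$-sequence for $f$ lying in $\Delta_r\gamma$, contradicting the hypothesis.

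The main technical obstacle I foresee is the geometric bookkeeping in the first direction: Theorem \ref{TH9} delivers discs $D_h(z_n,r_n)$ that need not lie in the originally given $\Delta_{r_0}\gamma$, so the argument hinges on the fact that $r_n\to 0$ keeps these discs inside a fixed enlargement $\Delta_{r_0'+1}\gamma$. Managing the interchange of pseudo-hyperbolic and hyperbolic radii afforded by \eqref{EQ1} requires only minor care, and the remaining ingredients are routine applications of the triangle inequality and the Marty-type criterion packaged in Theorem \ref{TH6}.
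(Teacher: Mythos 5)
The paper does not actually prove Theorem \ref{TH12}; it is quoted from \cite{SUSIC.PAVICEVIC.BALCANICA}, and the surrounding text only indicates that the argument should be assembled from the tools already stated (Theorems \ref{TH6}, \ref{TH8}, \ref{TH9}). Your proposal does exactly that and is correct: the forward direction correctly uses the triangle inequality for $d_h$ together with $r_n\to 0$ to trap the discs $D_h(z_n,r_n)$ inside the enlarged angle $\Delta_{r_0'+1}\gamma$ (still a legitimate curvilinear angle after converting back to a pseudo--hyperbolic radius in $(0,1)$ via \eqref{EQ1}), contradicting the Marty-type bound of Theorem \ref{TH6}; the converse correctly notes that $|z_n|\to 1$ is forced by continuity of $(1-|z|^2)f^\sharp(z)$ on compacta, so Theorem \ref{TH8} applies and produces a $P$-sequence inside $\Delta_r\gamma$. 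This is evidently the intended route, and I see no gap.
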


From Theorem \ref{TH11} we immediately deduce

\begin{theorem}\label{TH13}
Let $f:\mathbb{D}\rightarrow\overline{\mathbb{C}}$  be a meromorphic function
such that there exits a domain $O$ which contains $e^{i\theta}\in\Gamma$ such
that for all $r\in (0,1)$ the function $f$      is bounded in $O\cap \Delta_r
\gamma$. Then  $f$ is    normal function along $\gamma$, i.e., $f$  is normal
along $[\gamma]$.
\end{theorem}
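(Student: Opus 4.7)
The plan is to establish the contrapositive via the $P$--sequence characterisation in Theorem \ref{TH12}: if $f$ were not normal along $\gamma$, then there would exist $r_0\in(0,1)$ and a $P$--sequence $\{z_n\}\subseteq\Delta_{r_0}\gamma$ for $f$, and the boundedness hypothesis on $O\cap\Delta_r\gamma$ will be shown to rule this out. The normality along the class $[\gamma]$ then follows from Theorem \ref{TH7}.

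The first step is to upgrade the a priori information $|z_n|\to 1$ (from Definition \ref{DEF8}) to the stronger $z_n\to e^{i\theta}$. Writing $z_n=\varphi_{w_n}(a_n)$ with $w_n\in\gamma$ and $|a_n|\le r_0$ (as permitted by Lemma \ref{LE1}), the M\"obius identity
\begin{equation*}
1-|z_n|^2=\frac{(1-|a_n|^2)(1-|w_n|^2)}{|1+a_n\overline{w_n}|^2}
\end{equation*}
forces $|w_n|\to 1$; since $\gamma$ terminates only at $e^{i\theta}$, this gives $w_n\to e^{i\theta}$. Because $\varphi_w(a)\to e^{i\theta}$ uniformly for $a\in\overline{D}(r_0)$ as $w\to e^{i\theta}$, we conclude $z_n\to e^{i\theta}$.

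Next, fix a small $\varepsilon\in(0,1)$ and set $\varepsilon'=\log\frac{1+\varepsilon}{1-\varepsilon}$. The triangle inequality in $d_h$ yields $D_h(z_n,\varepsilon')\subseteq\Delta_{r_1}\gamma$, where $r_1\in(0,1)$ is the pseudo--hyperbolic radius corresponding via \eqref{EQ1} to hyperbolic radius $r_0'+\varepsilon'$. Since a hyperbolic disc of fixed radius has Euclidean diameter shrinking to zero as its centre approaches $\Gamma$, the discs $D_h(z_n,\varepsilon')$ lie inside $O$ for all sufficiently large $n$, say $n\ge N$. Consequently
\begin{equation*}
\bigcup_{n\ge N}D_h(z_n,\varepsilon')\subseteq O\cap\Delta_{r_1}\gamma,
\end{equation*}
and the hypothesis provides a bound $|f|\le M=M(r_1)$ on this union. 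This directly contradicts the $P$--sequence property applied to the subsequence $\{z_n\}_{n\ge N}$, which would demand that $f$ assume every value in $\overline{\mathbb{C}}$ except at most two, infinitely often, on exactly that union.

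The main obstacle is the geometric bootstrap in the second paragraph: promoting $|z_n|\to 1$ to $z_n\to e^{i\theta}$ relies on $\gamma$ having $e^{i\theta}$ as its only boundary endpoint together with the uniform behaviour of $\varphi_w$ as $w\to e^{i\theta}$, and the containment $D_h(z_n,\varepsilon')\subseteq O$ relies on the Euclidean collapse of fixed--hyperbolic--radius discs near $\Gamma$. With these geometric facts in place, the triangle inequality and the definition of a $P$--sequence produce the contradiction essentially for free.
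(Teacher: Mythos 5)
Your proposal is correct and follows exactly the route the paper intends: the paper derives Theorem \ref{TH13} directly from the $P$--sequence characterization of normality along a curve (Theorems \ref{TH11}/\ref{TH12}), and your argument simply fills in the details of why a $P$--sequence in some $\Delta_{r_0}\gamma$ would have to converge to $e^{i\theta}$ and drag its hyperbolic neighborhoods into $O\cap\Delta_{r_1}\gamma$, where boundedness of $f$ contradicts Definition \ref{DEF8}. The final reduction to normality along $[\gamma]$ via Theorem \ref{TH7} is likewise the paper's.
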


Altogether,  from  the results of this section we obtain

\begin{proposition}
For    a meromorphic function  $f:\mathbb{D}\rightarrow\overline{\mathbb{C}}$
and a curve $\gamma$  the following conditions are equivalent:
\begin{enumerate}
\item  $f$ is normal along $\gamma$;
\item $f$  is normal along $[\gamma]$;
\item for all $r\in (0,1)$  holds
\begin{equation*}
\sup_{z\in\Delta_r\gamma} (1-|z|^2)f^\sharp (z)<\infty;
\end{equation*}
\item $f$ does not have $P-$sequences in $\Delta_r\gamma,\, r\in (0,1)$.
\end{enumerate}
\end{proposition}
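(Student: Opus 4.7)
The plan is to observe that the proposition is essentially a summary that packages together the individual equivalences already established in this section, so the proof reduces to chaining them. No new argument is needed; one just verifies that each pair (1)$\Leftrightarrow$(k) for $k=2,3,4$ is covered by a named result earlier in Section 5.

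First I would dispatch (1)$\Leftrightarrow$(3). This is precisely the content of Theorem \ref{TH6}: $f$ is normal along $\gamma$ in $\mathbb{D}$ if and only if $\sup_{z\in\Delta_r\gamma}(1-|z|^2)f^{\sharp}(z)<\infty$ for every $r\in(0,1)$. So (1)$\Leftrightarrow$(3) requires no further work.

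Next I would handle (1)$\Leftrightarrow$(2). By Theorem \ref{TH7}, $f$ is normal along $\gamma$ iff it is normal along every $\gamma_1\in[\gamma]$; this is exactly what Definition \ref{DEF7} means by \emph{normal along the class $[\gamma]$}. Hence (1)$\Leftrightarrow$(2) is immediate from Theorem \ref{TH7} together with Definition \ref{DEF7}. (Behind the scenes, this uses Lemma \ref{LE2} to pass between curves in the same equivalence class via the characterization (3).)

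Finally, (1)$\Leftrightarrow$(4) is Theorem \ref{TH12}: normality of $f$ along the simple curve $\gamma$ is equivalent to the non-existence of $P$-sequences of $f$ in the curvilinear angles $\Delta_r\gamma$ for every $r\in(0,1)$. Combining the three equivalences, one obtains the circular chain (2)$\Leftrightarrow$(1)$\Leftrightarrow$(3) and (1)$\Leftrightarrow$(4), which is the full statement of the proposition. Since every step is a direct citation of a result already proved (or stated) in Section 5, there is no real obstacle; if anything, the only point requiring a line of care is making explicit in (2)$\Leftrightarrow$(1) that normality is a class invariant under $\sim$, which is exactly what Theorem \ref{TH7} delivers.
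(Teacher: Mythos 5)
Your proposal is correct and matches the paper's own derivation: the proposition is stated there with the preface ``Altogether, from the results of this section we obtain,'' i.e.\ it is exactly the assembly of Theorem \ref{TH6} for (1)$\Leftrightarrow$(3), Theorem \ref{TH7} with Definition \ref{DEF7} for (1)$\Leftrightarrow$(2), and Theorem \ref{TH12} for (1)$\Leftrightarrow$(4). Nothing further is needed.
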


This characterization could be stated also  on the normality of $f$ along
$\gamma$ on $D(r)$ for each $r\in (0,1)$.

$P-$sequences,   as shows Theorem \ref{TH3} in \cite{GAVRILOV.SBORNIK.67},
characterize boundary behavior of meromorphic functions in the unit disc
(for example see  also \cite{SUSIC.PAVICEVIC.BALCANICA}, \cite{GAVRILOV.BURKOVA.DOKL},
\cite{LOVSHINA.DOKL},  \cite{PAVICEVIC.SUSIC.DOKL}, \cite{GAVRILOV.SBORNIK.67},
\cite{PAVICEVIC.NEW.ZELAND}, \cite{GAVRILOV.SBORNIK.71}, \cite{PAVICEVIC.MOSCOW.BULL},
\cite{GAUTHIER.NAGOYA}, \cite{GAUTHIER.CANADIAN.J.MATH}).   We  will use
them in the  seventh section for the  construction of        meromorphic
functions showing that  the results from this paper   cannot be improved.

\section{Curvilinear boundary values of meromorphic functions}

\begin{theorem}\label{TH14}
Let $f:\mathbb{D}\rightarrow\overline{\mathbb{C}}$ be a meromorphic function,
let a curve $\gamma\subseteq\mathbb{D}$  terminates in $e^{i\theta}\in\Gamma$,
and let $c\in\overline{\mathbb{C}}$. The following conditions are equivalent:
\begin{enumerate}
\item
$f$ is normal along $\gamma$ and $\lim_{\gamma\ni  z\rightarrow e^{i\theta}}f(z)=c$;
\item $c$ is $\Delta_\gamma-$boundary value of $f$.
\end{enumerate}
\end{theorem}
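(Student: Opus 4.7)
The implication (2) $\Rightarrow$ (1) is immediate: since $\gamma \subseteq \Delta_r\gamma$ for every $r \in (0,1)$, the assumption $\bigcup_{r \in (0,1)} C(f,\Delta_r\gamma,e^{i\theta})=\{c\}$ forces $\lim_{\gamma \ni z \rightarrow e^{i\theta}} f(z) = c$, while Corollary \ref{CORO3} gives the normality of $f$ along $\gamma$. For the substantive direction (1) $\Rightarrow$ (2), my plan is to fix $r \in (0,1)$, take an arbitrary $a \in C(f, \Delta_r\gamma, e^{i\theta})$, and show $a = c$. I pick a sequence $u_n \in \Delta_r\gamma$ with $u_n \rightarrow e^{i\theta}$ and $f(u_n) \rightarrow a$; Lemma \ref{LE1} lets me write $u_n = \varphi_{w_n}(\zeta_n)$ with $w_n \in \gamma$ and $|\zeta_n| \leq r$. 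Because $d_{ph}(u_n, w_n) \leq r < 1$ and $u_n \rightarrow e^{i\theta}$, the Euclidean diameter of $\overline{D}_{ph}(w_n, r)$ shrinks as $|w_n| \to 1$, forcing $w_n \rightarrow e^{i\theta}$ as well.

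Normality of $f$ along $\gamma$ means that $\{f \circ \varphi_w : w \in \gamma\}$ is a normal family in $\mathbb{D}$, so along a subsequence the functions $f_n := f \circ \varphi_{w_n}$ converge locally uniformly in the spherical metric to a limit $g$ that is either meromorphic in $\mathbb{D}$ or identically $\infty$. The identity $f_n(0) = f(w_n) \rightarrow c$, coming from the hypothesis $\lim_{\gamma \ni z \rightarrow e^{i\theta}} f(z) = c$, gives $g(0) = c$; after a further extraction ensuring $\zeta_n \rightarrow \zeta^* \in \overline{D(r)}$, uniform convergence on this compact set forces $g(\zeta^*) = \lim_n f_n(\zeta_n) = \lim_n f(u_n) = a$. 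The entire proof thus reduces to the claim that $g \equiv c$, for this immediately delivers $a = c$.

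To establish $g \equiv c$, I would produce a sequence of points accumulating at $0 \in \mathbb{D}$ at which $g$ takes the value $c$ and then invoke the identity principle for meromorphic functions. For each integer $k \geq 1$, the portion of $\gamma$ past $w_n$ (well defined for $n$ large enough) is a continuous arc of infinite hyperbolic length terminating at $e^{i\theta} \in \Gamma$, so by the intermediate value theorem applied to $t \mapsto d_h(w_n,\gamma(t))$ it contains a point $\xi_n^{(k)}$ with $d_h(w_n, \xi_n^{(k)}) = 1/k$. Setting $\eta_n^{(k)} = \varphi_{w_n}^{-1}(\xi_n^{(k)})$ yields $d_h(0, \eta_n^{(k)}) = 1/k$, so these preimages lie on a compact hyperbolic sphere in $\mathbb{D}$. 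A diagonal extraction gives $\eta_n^{(k)} \rightarrow \eta^{(k)}$ as $n \rightarrow \infty$ for every $k$, with $\eta^{(k)} \neq 0$ and $\eta^{(k)} \rightarrow 0$. Since $\xi_n^{(k)} \rightarrow e^{i\theta}$ along $\gamma$ while the $\eta_n^{(k)}$ remain in the compact set $\overline{D}_h(0, 1)$, local uniform convergence $f_n \rightarrow g$ combined with $f_n(\eta_n^{(k)}) = f(\xi_n^{(k)}) \rightarrow c$ yields $g(\eta^{(k)}) = c$ for every $k$. The identity theorem for meromorphic functions on the connected domain $\mathbb{D}$ then forces $g \equiv c$ (the degenerate case $g \equiv \infty$ can occur only when $c = \infty$, whence $a = \infty = c$ directly). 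The main obstacle is precisely this last step: its crux is that a simple curve ending on $\Gamma$ necessarily has infinite hyperbolic length, which is what provides the supply of nearby points on $\gamma$ needed to force $g$ to take the value $c$ on a set accumulating at $0$.
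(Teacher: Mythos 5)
Your proof is correct and follows essentially the same route as the paper's: both extract a locally uniformly convergent subsequence from the normal family $\{f\circ\varphi_w : w\in\gamma\}$ and force the limit function to be identically $c$ via the identity theorem, using preimages under $\varphi_{w_n}^{-1}$ of points of $\gamma$ that accumulate at an interior point of $\mathbb{D}$. The differences are only organizational: you argue directly on an arbitrary cluster value using hyperbolic spheres of radius $1/k$ and a diagonal extraction (and you treat the degenerate case $g\equiv\infty$ explicitly), whereas the paper selects arcs of $\gamma$ in the annuli $\overline{D}_{ph}(w_{n_k},r_1)\setminus D_{ph}(w_{n_k},r)$ and then routes the conclusion through Theorem \ref{TH1}.
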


\begin{proof} {\it (1) implies (2)}:       Since $f$ is normal along $\gamma$,
for any sequence $\{w_n\}\subseteq\gamma$ which satisfies $\lim_{n\rightarrow
\infty} w_n = e^{i\theta}$ there exist a  subsequence $\{w_{n_k}\}$ such that
the $\{f_{n_k}(z)= f\circ \varphi_{{n_k}}(z)\}$,     where $\varphi_{{n_k}} =
\varphi_{w_ {n_k}}$,  is convergent to a    (meromorphic) function $\psi$ (in
the local topology of $\mathbb{D}$). Let $r_1\in (0,1)$             and $K  =
\overline{D}_{ph}(0,r_1) = \overline{D}(r_1)  =  \{z:|z|\le r_1\}$. For $r\in
(0,r_1)$  let us consider $\gamma\cap \overline{D}_{ph}(w_{n_k},r_1)\setminus
D_{ph}(w_{n_k},r)$. This set consists of two curves; let $\gamma_k$  be   one
of them and denote  $\Gamma_k= \varphi_{n_k}^{-1}(\gamma_k)$.  For all  $m\in
\mathbb{N}$         let $\{z_k^m\in\Gamma_k\}$ be any sequence that satisfies
$\lim_{k  \rightarrow\infty}z_k^m=z_0^m\in \overline{D}_{ph}(0,r_1)$. We will
show  that $\psi(z_0^m)=c$ for all $m\in\mathbb{N}$. Namely, for all    $m\in
\mathbb{N}$ we have
\begin{equation*}
d_S(\psi(z_0^m),c)\le
d_S(\psi(z_0^m),\psi(z_k^m))+d_S(\psi(z_k^m),f_{n_k}(z_k^m))+d_S(f_{n_k}(z_k^m),c)
\end{equation*}
Let     $\varepsilon>0$     be any number. Because   of continuity  of $\psi$
we have $d_S(\psi(z_0^m) ,\psi(z_k^m))<\frac\varepsilon 3$,  if $k$    is big
enough. Since the sequence  $\{f_{n_k}\}$   is  convergent  to  $\psi$ in the
local   topology      of $\mathbb{D}$, we have $d_S(\psi(z),f_{n_k}(z))<\frac
\varepsilon3$ for all     $z\in \overline{D}_{ph}(0,r_1)$   and if $k$ is big
enough. Since $z_k^m\in\overline{D}_{ph}(0,r)$,      we have $d_S(\psi(z_k^m),
f_{n_k}(z_k^m))< \frac \varepsilon 3$.  Since $z_k^m\in\Gamma_k$,  it follows
$\varphi_{n_k}(z_k^m) = w_k^m\in\gamma_k\subseteq\gamma$  and $\lim _{k\rightarrow\infty} w_k^m  =
e^{i\theta}$.     Since $c$ is an asymptotic boundary  value of $f$ and since
$\lim_{k\rightarrow\infty}f_{n_k}(w_k^m)=\lim_{k\rightarrow\infty}     f\circ
\varphi_{n_k}(z_k^m)    =  \lim_{k\rightarrow \infty } f_{n_k}(z_k^m )=c$ for
big enough $k$, we have $d_S(f_{n_k}(z_k^m),c)<\frac \varepsilon3$    for all
$m\in \mathbb{N}$. From the preceding inequalities it follows $d_S(\psi(z_0^m),c)
< \varepsilon$ for all $m\in\mathbb{N}$.       Since    $\varepsilon$  is any
positive number, it must be $\psi(z_0^m)=c$ for all   $m\in\mathbb{N}$. Since
the sequence $\{z_0^m\}$ lies in $\overline{D}_{ph}(0,r_1)$ and since in this
set it has an accumulation point, from the uniqueness theorem,  we have $\psi
\equiv c$.

Thus,   it   is  showed     that     any    convergent sequence (in the local
topology of $\mathbb{D}$)  of the family   $\mathcal {F}_{f,\gamma}= \{f\circ
\varphi_w:w\in\gamma\}$ converges to $c$. We will show now that any  sequence
of   the family $\mathcal{F}_{f,\gamma}$ converges to the constant $c$ in the
local topology. Assume contrary, let that  there exist a    sequence $\{f_n\}
\subseteq \mathcal{F}_{f,\gamma}$,  which   is    not convergent in the local
topology to the constant $c$.  There exist $\varepsilon > 0 $ such that   for
any   $k\in\mathbb{N}$ there exist        $n_k\in\mathbb{N}$ and  $z_{n_k}\in
\overline{D}_{ph}(0,r)$ such that     $d_S(f_{n_k}(z_{n_k}),c)\ge\varepsilon$.
Since        the           family $\mathcal{F}_{f,\gamma}$ is normal,     the
sequence $\{f_{n_k}\}$ has a subsequence $f_{n_{k_l}}$ which is   convergent;
according to the preceding, it converges  to the constant $c$,  what       is
contrary to the assumption  $d_S(f_{n_k}(z_{n_k}) ,c ) \ge \varepsilon$. This
contradiction shows that any sequence in $\mathcal {F}_{f,\gamma}$  converges
in the local topology  of  $\mathbb{D}$  to the constant $c$.   From this and
from Theorem \ref{TH1} we have $C(f,\Delta_r\gamma,e^{i\theta})= \{c\}$   for
all $r\in (0,1)$, that is the function $f$ in the point $e^{i\theta}$     has
$\Delta_\gamma-$boundary    value    along the curve $\gamma$ equal to $c$.

{\it (2) implies (1)}: Form Theorem \ref{TH1} and condition (2) we have $C(f,
\Delta_r \gamma, e^{i\theta})= \{c\}$ for all $r\in (0,1)$. It follows   that
any sequence $\{f_n\}\subseteq\mathcal{F}_{f,\gamma}$ converges        to the
constant $c$. We infer that   $\mathcal {F}_{f,\gamma}$ is normal family   in
$\mathbb{D}$. Regarding  Definition \ref{DEF4}  this means that $f$ is normal
along the curve $\gamma$. From the condition $C(f,\Delta_r\gamma,e^{i\theta})
=\{c\}$ evidently follows $\lim_{\gamma\ni z\rightarrow e^{i\theta}} f(z)=c$.
\end{proof}

\begin{remark}
Seidel and Walsh proved (see Theorem 4, p. 199 in \cite{SEIDEL.WALSH.TAMS}):
Let $f$ be an  analytic function in $\mathbb{D}$ which omits at least    two
values. Let $\gamma_1$ and $\gamma_2$ be simple curves in $\mathbb{D}$ which
terminate       in   $1$      with  finite        Fr\'{e}shet   distance. If
$\lim_{\gamma_1\ni z \rightarrow 1}f(z)  =  c\in\overline{\mathbb{C}}$, then
also $\lim_{\gamma_2\ni z\rightarrow 1}f(z)=c$. This statement remains valid
if we  assume    that    $f$     is normal meromorphic function  in the disc
$\mathbb{D}$ (see Theorem 2.12, p. 131, in \cite{LOHWATER}).     Our Theorem
\ref{TH14} shows    that   the  theorem of Seidel and   Walsh  holds for all
curves in the class $[\gamma]$.        Lemma  \ref{LE4}   shows that Theorem
\ref{TH14} is  a generalization of the previous results.
\end{remark}

In the similar manner as Theorem \ref{TH14} one can prove the      following

\begin{theorem}\label{TH15}
For a  meromorphic function $f:\mathbb{D}\rightarrow\overline{\mathbb{C}},\,
r\in (0,1)$ and     $c\in\overline{\mathbb{C}}$ the following conditions are
equivalent:
\begin{enumerate}
\item $f$ is normal in  ${D}(r)$ along a curve $\gamma$ and $\lim_{\gamma\ni
z\rightarrow e^{i\theta }} f(z)=c$;
\item $C(f,\Delta_{r_1}\gamma,e^{i\theta}) = c$ for all $r_1\in (0,r)$.
\end{enumerate}
\end{theorem}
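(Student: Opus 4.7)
The plan is to follow the blueprint of the proof of Theorem \ref{TH14}, but localized to the disc $D(r)$. The key observation is that Lemma \ref{LE1} gives $\Delta_{r_1}\gamma=\bigcup_{w\in\gamma}\varphi_w(\overline{D}(r_1))$, so for $r_1<r$ the compact $K=\overline{D}(r_1)$ lies in $D(r)$ and the condition $C(f,\Delta_{r_1}\gamma,e^{i\theta})=\{c\}$ becomes exactly the cluster set condition in Theorem \ref{TH1} for this $K$. Moreover, the proof of Theorem \ref{TH1} goes through verbatim when one restricts attention to compact subsets of $D(r)$, yielding convergence in the local topology of $D(r)$ in place of $\mathbb{D}$.

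For (1)$\Rightarrow$(2): by the normality hypothesis, the family $\mathcal{F}_{f,\gamma}=\{f\circ\varphi_w:w\in\gamma\}$ is normal in $D(r)$. Pick any sequence $\{w_n\}\subseteq\gamma$ with $w_n\to e^{i\theta}$ and extract a subsequence $\{w_{n_k}\}$ such that $f_{n_k}=f\circ\varphi_{w_{n_k}}$ converges locally uniformly in $D(r)$ to a meromorphic function $\psi$. To show $\psi\equiv c$ on $D(r)$, I fix $r_1\in(0,r)$ and some $r_0\in(0,r_1)$, and for each $k$ take one of the two arcs $\gamma_k$ of $\gamma\cap(\overline{D}_{ph}(w_{n_k},r_1)\setminus D_{ph}(w_{n_k},r_0))$ running toward $e^{i\theta}$. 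Setting $\Gamma_k=\varphi_{n_k}^{-1}(\gamma_k)\subseteq\overline{D}(r_1)$, selecting sequences $z_k^m\in\Gamma_k$ with $z_k^m\to z_0^m\in\overline{D}(r_1)$, and running the same three-term triangle inequality
\begin{equation*}
d_S(\psi(z_0^m),c)\le d_S(\psi(z_0^m),\psi(z_k^m))+d_S(\psi(z_k^m),f_{n_k}(z_k^m))+d_S(f_{n_k}(z_k^m),c)
\end{equation*}
as in the proof of Theorem \ref{TH14} (continuity of $\psi$ for the first term, local uniform convergence on $\overline{D}(r_1)\subset D(r)$ for the second, and $w_k^m=\varphi_{n_k}(z_k^m)\in\gamma_k\subseteq\gamma$ with $w_k^m\to e^{i\theta}$ together with the asymptotic hypothesis for the third) gives $\psi(z_0^m)=c$ for all $m$. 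The identity theorem then forces $\psi\equiv c$ on $D(r)$. A standard contradiction argument (mirroring the second half of (1)$\Rightarrow$(2) in Theorem \ref{TH14}) upgrades this to: every sequence in $\mathcal{F}_{f,\gamma}$ converges to $c$ in the local topology of $D(r)$. Applying the $D(r)$-version of Theorem \ref{TH1} with $K=\overline{D}(r_1)$ finally yields $C(f,\Delta_{r_1}\gamma,e^{i\theta})=\{c\}$ for every $r_1\in(0,r)$.

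For (2)$\Rightarrow$(1): every compact $K\subset D(r)$ is contained in some $\overline{D}(r_1)$ with $r_1<r$, hence $\bigcup_n\varphi_{w_n}(K)\subseteq\Delta_{r_1}\gamma$ and the hypothesis gives $C\bigl(f,\bigcup_n\varphi_{w_n}(K),e^{i\theta}\bigr)=\{c\}$. The $D(r)$-version of Theorem \ref{TH1} then delivers local uniform convergence of $\{f_n\}$ to $c$ on $D(r)$ for any $\{w_n\}\subseteq\gamma$ with $w_n\to e^{i\theta}$, which is precisely normality of $\mathcal{F}_{f,\gamma}$ in $D(r)$ (Definition \ref{DEF6}). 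Evaluation at $z=0$ additionally gives $f(w_n)\to c$, so $\lim_{\gamma\ni z\to e^{i\theta}}f(z)=c$.

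The main obstacle is the same delicate point as in Theorem \ref{TH14}: verifying that a locally uniform limit $\psi$ of a subsequence must be the constant $c$ on all of $D(r)$. This hinges on the geometric construction of the arcs $\gamma_k$ inside $\gamma\cap\overline{D}_{ph}(w_{n_k},r_1)$ producing, after pullback by $\varphi_{n_k}^{-1}$, a set accumulating to a nontrivial subset of $\overline{D}(r_1)$ on which $\psi=c$. The restriction $r_1<r$ is essential here, since the pulled-back points must stay strictly inside $D(r)$ for the local uniform convergence of $f_{n_k}\to\psi$ to be applicable on a neighbourhood of $\overline{D}(r_1)$.
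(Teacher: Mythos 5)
Your proposal is correct and follows exactly the route the paper intends: the paper gives no separate proof of Theorem \ref{TH15}, stating only that it is proved ``in the similar manner as Theorem \ref{TH14}'', and your argument is precisely that localization to $D(r)$, with the compact $K=\overline{D}(r_1)$, $r_1<r$, replacing the compacts of $\mathbb{D}$ and the same arc construction $\gamma_k$, $\Gamma_k=\varphi_{n_k}^{-1}(\gamma_k)$ forcing the limit function $\psi\equiv c$ via the identity theorem.
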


Form Theorems \ref{TH6}, \ref{TH14} and \ref{TH15} we conclude

\begin{theorem}\label{TH16}
Assume  a meromorphic function $f:\mathbb{D}\rightarrow\overline{\mathbb{C}}$
has  an   asymptotic boundary value along a curve $\gamma$. In order that $f$
has $\Delta_r\gamma-$boundary value along the curve $\gamma$  it is necessary
and  sufficient   that
\begin{equation*}
\sup_{z\in \Delta_r\gamma} (1-|z|^2)f^\sharp (z)<\infty
\end{equation*}
for  all  $r\in (0,1)$, what is equivalent to the condition  that   $f$ does
not contain   $P-$sequences in $\Delta_r\gamma$.
\end{theorem}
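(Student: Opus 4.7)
The plan is to deduce Theorem \ref{TH16} by chaining together three earlier results that have already done the hard work, so the proof is essentially a bookkeeping argument that assembles the necessary and sufficient conditions from the pieces already established in Sections 5 and 6.

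First I would invoke Theorem \ref{TH14}. Under the hypothesis that $\lim_{\gamma\ni z\to e^{i\theta}} f(z) = c$ exists in $\overline{\mathbb{C}}$, the equivalence in Theorem \ref{TH14} collapses to the following: $f$ has $\Delta_\gamma$-boundary value $c$ at $e^{i\theta}$ if and only if $f$ is normal along $\gamma$. Thus the curvilinear boundary value problem reduces, under the standing assumption, to a question about normality of $f$ along $\gamma$.

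Next I would apply Theorem \ref{TH6}, which characterizes normality along a simple curve $\gamma$ terminating at $e^{i\theta}$ by the spherical derivative estimate
\begin{equation*}
\sup_{z\in\Delta_r\gamma}(1-|z|^2)f^\sharp(z)<\infty\quad\text{for all } r\in(0,1).
\end{equation*}
Combining this with the equivalence from Theorem \ref{TH14}, I obtain the first half of the stated equivalence: the existence of the $\Delta_\gamma$-boundary value is equivalent to the Marty-type bound above on every curvilinear angle.

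Finally, to obtain the $P$-sequence characterization, I would cite Theorem \ref{TH12} (or equivalently the combined statement in the Proposition immediately preceding Theorem \ref{TH16}), which asserts that $f$ is normal along $\gamma$ precisely when $f$ has no $P$-sequences in $\Delta_r\gamma$ for any $r\in(0,1)$. Chaining the three equivalences produces the statement of Theorem \ref{TH16}. The only subtlety worth flagging is that the three characterizations of normality along $\gamma$ must hold \emph{for all} $r\in(0,1)$ simultaneously (not merely for a single $r$), which is already built into the formulations of Theorems \ref{TH6} and \ref{TH12}; no additional work is needed. Since each link in the chain is already proved, there is no genuine obstacle, and the argument amounts to a short two-step citation.
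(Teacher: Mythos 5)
Your proposal is correct and is essentially the paper's own argument: the paper derives Theorem \ref{TH16} by exactly this chain, citing Theorem \ref{TH14} for the reduction to normality along $\gamma$, Theorem \ref{TH6} for the spherical-derivative characterization, and the $P$-sequence characterization of normality along a curve (Theorem \ref{TH12} and the preceding Proposition). The only cosmetic difference is that the paper also names Theorem \ref{TH15} in its citation list, which is not actually needed for the global statement.
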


\begin{remark}
Results in Theorems \ref{TH14}, \ref{TH15} and \ref{TH16} are generalization
of Theorems 2, 2', 4 and 5 of Lehto  and Virtanen \cite{LEHTO.VIRTANEN.ACTA}.
They     considered the normal meromorphic functions in $\mathbb{D}$ and its
angular boundary  values. We consider the  normal meromorphic      functions
along  a curve $\gamma$ and  $\Delta_\gamma-$boundary  values.   Particulary,
if $\gamma$ is not tangent to $\Gamma$, from Theorem \ref{TH14} and \ref{TH16}
we derive  Theorems 2, 2', 4  and   5 in  \cite{LEHTO.VIRTANEN.ACTA}.
\end{remark}

\begin{remark}\label{RE7}
Theorem   \ref{TH2}, Theorem \ref{TH14} and Theorem \ref{TH15}    show  that
each of the following two  conditions:
\begin{enumerate}
\item $f$ is normal meromorphic function along a curve $\gamma$;
\item a meromorphic function $f$ has an asymptotic  boundary value along the
curve $\gamma$,
\end{enumerate}
is   a   necessary condition   for the existence of $\Delta_\gamma-$boundary
value  ($[\gamma]-$boundary value) of $f$.

Taken together,      conditions (1) and (2)  are  necessary and    sufficient
for the    existence  of $\Delta_\gamma-$boundary  value ($[\gamma]-$boundary
value)  of  $f$.
\end{remark}

\section{Examples}
The  following   examples   we   will      construct in the similar way as in
\cite{GAVRILOV.BURKOVA.DOKL}.

\begin{example}\label{EX1}
Let   $\gamma$        be a curve terminating in $e^{i\theta}\in\Gamma$. With
$\gamma_r^+$  and $\gamma_r^-$ we denote the    parts of the boundary of the
curvilinear angle $\Delta_r\gamma,\, r\in (0,1)$  as in the proof of Theorem
\ref{TH3}. Let $\{z_k\},\, \lim_{k\rightarrow \infty}z_k = e^{i\theta}$ be a
sequence       of points such  that $z_{2m}\in\gamma^+_{r_m},\,  z_{2m-1}\in
\gamma^-_{r_m}$, where $r_m\uparrow 1$. Moreover, let $\{\varepsilon_k\}$ be
a sequence of numbers  which satisfies:
\begin{enumerate}
\item $0<\varepsilon_{k+1}<\varepsilon_k$ for all $k\in\mathbb{N}$;
\item  $\lim_{k\rightarrow\infty}\varepsilon_k=0$;
\item $D_i\cap D_j$ is the empty set for all $i\ne j$, where $D_k =\{z:|z-z_k|
<\varepsilon_k\}$;
\item $\lim_{k\rightarrow \infty} \sup_{z\in D_k}d_h(z,z_k) = 0$;
\item $\sum_{k=1}^\infty\varepsilon_k<\infty$.
\end{enumerate}
Let          $a_k=\varepsilon_k^2$  for all  $k\in\mathbb{N}$ and $f_0(z) =
\sum_{k=1}^\infty a_k (z-z_k)^{-1}$. For fixed $n\in\mathbb{N}$,    we have
\begin{equation*}
\left|\sum_{k\ne n} a_k(z-z_k)^{-1}\right|\le\sum_{k=1}^\infty \varepsilon_k^2
<\infty.
\end{equation*}
It follows that $f_0$ is a meromorphic function   in  the disc $\mathbb{D}$
with poles at $z_k,\,  k\in\mathbb{N}$.  Since   $|f_0(z_k+\varepsilon_k)|<
\infty$ and $\lim_{k\rightarrow\infty} d_h (z_k,z_k+\varepsilon_k)=0$, from
Theorem  \ref{TH10} it follows that         the  sequence    $\{z_k\}$ is a
$P-$sequence for $f_0$.     Since for all $z',\, z'' \in\mathbb{D}\setminus
\bigcup_{k\in\mathbb{N}} D_k$ holds
\begin{equation*}
|f_0(z')-f_0(z'')|\le |z'-z''|  \sum _{k=1}^\infty \varepsilon_k
\end{equation*}
and since any of sets $\Delta_r\gamma,\, r\in (0,1)$ contains     a  finite
number of   points  from $\{z_k\}$,   it  follows  that
\begin{equation*}
\limsup _{\Delta_r \gamma\ni z \rightarrow e^{i\theta }} |f_0(z)| =  c_{f_0}(r)<\infty,\quad r\in (0,1).
\end{equation*}
Hence, for all $r\in (0,1)$, the function $f_0$  is  bounded   in $O_r\cup
\Delta_r\gamma$,  where $O_r = \{z: |z-e^{i\theta}|<1-r\}$.   From Theorem
\ref{TH13}, it follows that $f_0$ is normal in  $O_r\cup   \Delta_r\gamma$.
Now,  from Theorem \ref{TH11}  we obtain  that   $f_0$  is  normal   along
the curve $\gamma$        (see \cite{SCHIFF.BOOK}, p. 35, Montel's theorem).

The   way we constructed the function $f_0$ shows that any set $\mathcal{A}$
which contains all sets $O_r\cap \Delta_r\gamma,\, r\in (0,1)$,     and any
sequence of  points $\{z_k\}$ contains a $P-$sequence of the function $f_0$.
It    is possible to show that there exist vicinities $O_r,\, r\in(0,1)$ of
$e^{i\theta}$ such that
\begin{equation*}
\left(\bigcup_{r\in (0,1)}O_r\cap  \Delta_r\gamma\right)\cup\{z_k: k\in\mathbb{N}\}
\varsubsetneq  D\cup O_{e^{i\theta}},
\end{equation*}
where    $O_{e^{i\theta}}$    is any vicinity   of the  point $e^{i\theta}$.
\end{example}

The     preceding    facts  can be   illustrated well  if we take  for a set
$\mathcal{A}$    a    horo--cycle  which is tangent to $\Gamma$ in the point
$e^{i\theta}$ and for  the curve $\gamma$ a radius od   $\mathbb{D}$    with
one endpoint in       $e^{i\theta}$. Then the domain $\Delta_r\gamma$ is the
sub--domain of the disc  $\mathbb{D}$ which  is bounded by two hyper--cycles
which contain $\pm e^{i\theta}$ (see Figure 1).
\begin{figure}[htp]\label{FIG1}
\centering
\includegraphics{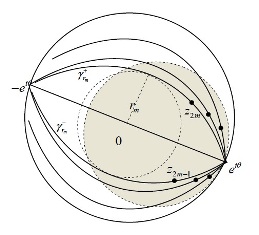}
\caption{}
\end{figure}

Thus, the function $f_0$ shows  that in a general case  does not exist a set
which contains all  sets $\Delta_r\gamma,\, r\in(0,1)$ such that  along that
set  the function does not poses $P-$sequences.

\begin{example}\label{EX2}
Let  $f_1(z) = f_0(z) (z-e^{i\theta})$, where  $f_0$   is the  function from
the preceding example. Also let $\gamma$ be the curve from the  same example.
Since for every $\Delta_r\gamma,\, r\in (0,1)$     there exist a    vicinity
$O(r)$ of the point $e^{i\theta}$ such that for every $z\in O(r)\cap\Delta_r
\gamma$ holds
\begin{equation*}
|f_1(z)| =|f_0(z)| |z-e^{i\theta}|\le C(r)|z-e^{i\theta}|\rightarrow 0 \quad
  \text{as}\quad z\rightarrow e^{i\theta},
\end{equation*}
we have that  $0$ is $\Delta_\gamma-$boundary value  of $f$.    On the other
side, any $P-$sequence $\{z_k\}$ for   the function $f_0$ is a  $P-$sequence
for  $f_1$,   what may be proved in the same way as for $f_0$. That sequence
is contained  in $\mathcal{A}$. Since    $\lim_{k\rightarrow\infty}f_1(z_k)=
\infty$,     it follows that  $\infty,\, 0\in C(f,\mathcal{A}, e^{i\theta})$.
\end{example}

The example of function $f_1$ shows that in general case does not exist    a
set  which contains all  $\Delta_r\gamma,\, r\in (0,1)$ such that along this
set the function has the unique cluster point, i.e., Example \ref{EX2} shows
that the theorem on the existence   of curvilinear boundary values cannot be
improved  in  the     direction which means the  expansion of sets $\Delta_r
\gamma,\,  r\in (0,1)$.

\section{Applications}
Let   $\gamma\subseteq\mathbb{D}$ be a curve  which terminates in a    point
$e^{i\theta}\in\Gamma$  and   which  is    tangent on the cycle $\Gamma$  in
that point. Denote by $\Delta_{\alpha,\rho}\gamma,\, \rho\in (0,1),\, \alpha
\in(0,\pi)$ the sub--domain of $\mathbb{D}$ bounded  by  $\gamma$, the chord
$h(\theta ,\alpha),\, \alpha\in (0,\frac\pi 2)$ of $\mathbb{D}$  and  by the
arc        of $D_\rho = \{z : |z - e^{i\theta} | =\rho \},\, \rho\in (0,1)$.
Moreover, denote
\begin{equation*}
G^\theta_{\gamma,r,\alpha,\rho} = \Delta_r\gamma  \cup\Delta_{\alpha,\rho}
\gamma,\quad   r,\, \rho \in (0,1),\, \alpha\in (0,\pi)
\end{equation*}
(see Figure 2).  It  is easy to check that  $\Delta_{\alpha,\rho}\gamma\not
\subseteq G^\theta _{\gamma, r,\alpha, \rho}$ for all $r,\, \rho\in (0,1),\,
\alpha\in (0,\pi)$.
\begin{figure}[htp]\label{FIG2}
\centering
\includegraphics{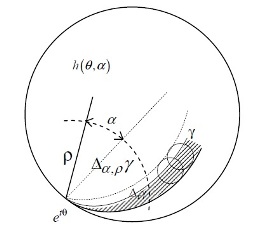}
\caption{}
\end{figure}

Lehto     and Virtanen (see  Remark, p. 53 in \cite{LEHTO.VIRTANEN.ACTA}, or
Remark on the  page 124 in \cite{LOHWATER}) showed that a normal meromorphic
function $f$  in the disc $\mathbb{D}$   which in a point $e^{i\theta}$ has
an asymptotic boundary value  $\lim_{\gamma\ni z\rightarrow e^{i\theta}}f(z)
=c\in\overline{\mathbb{C}}$     satisfies  $C(f, \Delta_{\alpha,\rho}\gamma,
e^{i\theta} )= \{c\}$ for all $\rho\in (0,1)$       and  $\alpha\in (0,\pi)$.
The following theorem shows that the result of Lehto   and  Virtanen for the
case   of a simple curve which is tangent on $\Gamma$   may be improved   in
the sense that  domains     $\Delta_{\alpha,\rho}\gamma,\,   \rho\in (0,1),\,
\alpha\in (0,\pi)$ along which an asymptotic value $c$  exists for  a normal
meromorphic          function in $\mathbb{D}$, may be replaced by the domain
$G^\theta_{\gamma,r,\alpha,\rho},\, r,\, \rho\in (0,1),\, \alpha\in (0,\pi)$.
Namely, we have the following

\begin{theorem}\label{TH17}
Let          $f:\mathbb{D}\rightarrow\overline{\mathbb{C}}$  be a   normal
meromorphic   function in the disc $\mathbb{D}$ i.e., $f\in N $    and let
$\lim_{\gamma\ni z\rightarrow e^{i\theta}} f(z) =c\in\overline{\mathbb{C}}$.
Then
\begin{equation*}
\bigcup _{r,\, \rho\in (0,1),\, \alpha\in (0,\pi)} C(f, G^\theta_{\gamma,r,\alpha,\rho},e^{i\theta})=\{c\}.
\end{equation*}
\end{theorem}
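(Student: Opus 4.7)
The plan is to split $G^\theta_{\gamma,r,\alpha,\rho}$ into its two defining pieces, show that on each piece the cluster set at $e^{i\theta}$ collapses to $\{c\}$, and then use the fact that cluster sets are additive under finite unions.

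First I would observe that, since $f\in N$ and $\gamma$ is a simple curve ending at $e^{i\theta}$, Theorem \ref{TH6} (or the equivalence in the Proposition of Section 5) tells us that $f$ is normal along $\gamma$. Combined with the hypothesis $\lim_{\gamma\ni z\to e^{i\theta}}f(z)=c$, Theorem \ref{TH14} then yields that $c$ is a $\Delta_\gamma$-boundary value of $f$, i.e.\
\begin{equation*}
C(f,\Delta_r\gamma,e^{i\theta})=\{c\}\quad\text{for every }r\in(0,1).
\end{equation*}
This handles the $\Delta_r\gamma$ component of $G^\theta_{\gamma,r,\alpha,\rho}$.

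Next I would invoke the Lehto–Virtanen result recalled just before the statement of Theorem \ref{TH17}: for a normal meromorphic function $f$ in $\mathbb{D}$ possessing an asymptotic value $c$ along $\gamma$, one has $C(f,\Delta_{\alpha,\rho}\gamma,e^{i\theta})=\{c\}$ for all $\rho\in(0,1)$ and all $\alpha\in(0,\pi)$. This controls the tangential sub-domain piece of $G^\theta_{\gamma,r,\alpha,\rho}$.

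Finally I would use the elementary identity
\begin{equation*}
C(f,A\cup B,e^{i\theta})=C(f,A,e^{i\theta})\cup C(f,B,e^{i\theta}),
\end{equation*}
which follows at once from the definition of cluster set: any sequence $\{z_n\}\subseteq A\cup B$ with $z_n\to e^{i\theta}$ has a subsequence entirely in $A$ or entirely in $B$, so each accumulation value of $\{f(z_n)\}$ already belongs to $C(f,A,e^{i\theta})\cup C(f,B,e^{i\theta})$, and the reverse containment is trivial. Applied to $A=\Delta_r\gamma$ and $B=\Delta_{\alpha,\rho}\gamma$, this gives $C(f,G^\theta_{\gamma,r,\alpha,\rho},e^{i\theta})=\{c\}$ for every admissible triple $(r,\alpha,\rho)$, and taking the union over all such triples yields the claim.

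I do not expect a substantive obstacle here: the content of the theorem is the assertion that the curvilinear-angle improvement of Theorem \ref{TH14} and the Lehto–Virtanen tangential-region statement can be glued together, and the gluing reduces to the trivial union-of-cluster-sets identity. The only subtlety worth checking is that $e^{i\theta}$ lies in the closure of $\Delta_{\alpha,\rho}\gamma$ (which is clear from the geometry of the sub-domain bounded by $\gamma$, the chord $h(\theta,\alpha)$, and the arc of $D_\rho$), so that the cluster set $C(f,\Delta_{\alpha,\rho}\gamma,e^{i\theta})$ is genuinely defined and nonempty; otherwise the Lehto–Virtanen statement would be vacuous and a separate sequential-compactness argument via the normality of $\{f\circ\varphi_w:w\in\gamma\}$ would be required.
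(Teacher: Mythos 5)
Your proposal is correct and follows essentially the same route as the paper: decompose $G^\theta_{\gamma,r,\alpha,\rho}$ into $\Delta_r\gamma$ and $\Delta_{\alpha,\rho}\gamma$, handle the first via the normality-along-$\gamma$ machinery (Theorem \ref{TH14}) and the second via the quoted Lehto--Virtanen result, then take the union of cluster sets. If anything, your citation chain (Theorem \ref{TH6} followed by Theorem \ref{TH14}) is the more precise justification of the $\Delta_r\gamma$ step, and your explicit verification of the finite-union identity for cluster sets is a small but welcome addition the paper leaves implicit.
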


\begin{proof}
From the mentioned result of Lehto and Virtanen we have $C(f, \Delta_{\alpha,
\rho}\gamma, e^{i\theta} )  =\{c\}$   for    all     $\alpha\in (0,\pi)$ and
for all $\rho\in (0,1)$.  From Theorem  \ref{TH13} it  follows $C(f,\Delta_r
\gamma,e^{i\theta}) = \{c\}$  for all $r\in (0,1)$. Thus, for  all $r,\, \rho
\in (0,1)$ and $\alpha \in (0,\pi)$ we have $ C(f, G^\theta_{\gamma,r,\alpha,
\rho}, e^{i\theta})=\{c\}$; this  means  that the union of all          sets
$C(f,G^\theta_{\gamma,r,\alpha,\rho},e^{i\theta})$ where $r,\, \rho\in (0,1)\,
\alpha\in (0,\pi)$ is equal to $\{c\}$.
\end{proof}

For simplicity in  what follows    we will   assume that any curve $\gamma$
that appears  is a simple  curve which connects the center  of $\mathbb{D}$
and  some point $e^{i\theta}\in\Gamma$ such  and any circle    $\Gamma_r  =
\{z:|z| = r\},\ 0<r<1$ intersects  in  exactly one point.

\v{S}aginjan \cite{SAGINJAN} proved the following  statement of uniqueness:
Let $f(z)$ be an analytic function  in the disc  $\mathbb{D},\, |f(z)|<1,\,
z\in \mathbb{D}$, and let  $f$   along a curve $\gamma$ satisfies      the
following estimate
\begin{equation}\label{EQ2}
|f(z)|\le \exp\left\{ - \frac { p(1-|z|)} {1-|z|}\right\},\quad z\in\gamma,
\end{equation}
where $p(t)$ is a function which arbitrary slow increase  to  $+\infty$ as
$t\rightarrow +0$,   then    $f(z)\equiv 0$     (see Theorem 2,  p. 23, in
\cite{SAGINJAN}).   The analytic    function   in $\mathbb{D}$    given by
$f(z) = \exp\{-\frac 1{1-z}\}$ shows that the condition  cannot be relaxed.
Gavrilov proved a theorem which is an analog of  the preceding   result of
\v{S}aginjan:  Let $f(z)$ be normal meromorphic in   $\mathbb{ D}$ and let
$\varepsilon$ be any positive number.  If  $f(z)$ along the radius $\arg z
= 0$ satisfies the inequality
\begin{equation*}
|f(z)|\le \exp\left \{ - \frac 1{(1-|z|)^{1+\varepsilon}}\right \},
\end{equation*}
then $f(z)\equiv0$ (see Theorem 1 and 2, pp. 4--6, in \cite{GAVRILOV.SAGINJAN}).
Another results which are generalizations  of the theorem of \v{S}aginjan
may be found  in \cite{EIDERMAN.ESSEN},    \cite{KHAVINSON.RUSS.MATH.SURV},
\cite{VUORINEN.MATH.Z}, \cite{DOLZHENKO}; see also \cite{KEG1}, \cite{KEG2}.

Let $P=\{p(t):t\in (0, b),\,  p(t)\uparrow +\infty\ \text{as}\ t\rightarrow
0^+\}$. It is easy to check that if $p(t)\in P$, then  $p_1(t)  =     c_1
p(c_2t^\varepsilon)\in P$,   where $c_1,\, c_2$ and $\varepsilon$ are any
positive  numbers.

\begin{lemma}\label{LE5}
Let $f:\mathbb{D}\rightarrow\overline{\mathbb{C}}$ be a meromorphic function
which    is   normal along a curve $\gamma$  in $D(r)$ for some $r\in (0,1)$.
Furthermore, assume
\begin{equation}\label{EQ3}
|f(z)|\le \exp\left\{-\frac{p(1-|\varphi(z)|)}{1-|\varphi(z)|}\right\},\quad z\in\gamma
\end{equation}
where $\varphi:\Delta_{r_1}\gamma\rightarrow\mathbb{D},\, r_1\in (0,r)$ is a
conformal mapping of the curvilinear    angle $\Delta_{r_1}\gamma $ onto the
disk $\mathbb{D}$   such that   $\varphi (0) = 0$,  and let $p(t)$    be  an
arbitrary function which belong to the class $P$. Then $f(z)\equiv0$.
\end{lemma}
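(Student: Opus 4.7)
The plan is to pull the problem back to the unit disc through the given conformal map $\varphi$, so that the \v{S}aginjan-type decay becomes a standard estimate $\exp(-p(1-|w|)/(1-|w|))$ in the $\mathbb{D}$-coordinate. Setting $g=f\circ\varphi^{-1}:\mathbb{D}\to\overline{\mathbb{C}}$ and $\tilde\gamma=\varphi(\gamma)$, a simple curve in $\mathbb{D}$ ending at some $e^{i\tilde\theta}\in\Gamma$, condition \eqref{EQ3} reads
\[
|g(w)|\le\exp\left\{-\frac{p(1-|w|)}{1-|w|}\right\},\qquad w\in\tilde\gamma,
\]
so $g\to 0$ along $\tilde\gamma$.

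The first step is to show that $g$ is normal meromorphic on the whole unit disc, that is, $g\in N$. Normality of $f$ along $\gamma$ in $D(r)$, together with $r_1<r$ and $\Delta_{r_1}\gamma\subseteq\Delta_r\gamma$, gives via Theorem~\ref{TH5} a constant $C$ with $(1-|z|^2)f^\sharp(z)\le C$ for $z\in\Delta_{r_1}\gamma$. Because $\varphi^{-1}:\mathbb{D}\to\Delta_{r_1}\gamma\subseteq\mathbb{D}$ is a holomorphic self-map of $\mathbb{D}$, the Schwarz--Pick inequality yields $(1-|w|^2)|(\varphi^{-1})'(w)|\le 1-|\varphi^{-1}(w)|^2$. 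Combined with $g^\sharp(w)=f^\sharp(\varphi^{-1}(w))|(\varphi^{-1})'(w)|$, this produces
\[
(1-|w|^2)g^\sharp(w)\le (1-|\varphi^{-1}(w)|^2)\,f^\sharp(\varphi^{-1}(w))\le C
\]
for every $w\in\mathbb{D}$, so $g\in N$ by the Lehto--Virtanen characterization.

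The second step is to deduce $g\equiv 0$. Since $g\in N$ and $\lim_{\tilde\gamma\ni w\to e^{i\tilde\theta}}g(w)=0$, Theorem~\ref{TH14} yields the $\Delta_{\tilde\gamma}$-boundary value $0$ for $g$ at $e^{i\tilde\theta}$. In particular $g$ is holomorphic and eventually bounded by $1$ in some curvilinear angle $\Delta_{\rho}\tilde\gamma$ near $e^{i\tilde\theta}$, so on a suitable simply connected subdomain it is a bounded analytic function. Mapping that subdomain conformally onto $\mathbb{D}$ yields a bounded analytic $h:\mathbb{D}\to\mathbb{C}$ which, on the image of $\tilde\gamma$, still satisfies a decay of the same \v{S}aginjan form with a new majorant $p_1\in P$; this is guaranteed by the stability of the class $P$ under substitutions $p(t)\mapsto c_1 p(c_2 t^\varepsilon)$ recorded just before the lemma. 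Applying \v{S}aginjan's theorem from \cite{SAGINJAN} to $h$ forces $h\equiv 0$, hence $g\equiv 0$ and $f\equiv 0$.

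The only non-routine point is the last reduction: one must verify that when the relevant subdomain of $\Delta_{\rho}\tilde\gamma$ is mapped conformally onto $\mathbb{D}$, the quantity $1-|w|$ near $e^{i\tilde\theta}$ is comparable to a power $(1-|w'|)^{\alpha}$ near the corresponding image point, with $\alpha>0$ governed by the angular contact of $\tilde\gamma$ with $\Gamma$. Standard Koebe/Carath\'eodory boundary distortion estimates supply this relation, and the resulting majorant $p_1(t)=p(c_2 t^{\alpha})/t^{\alpha-1}$ remains in $P$ by the stability noted above. This is also the only place where the full strength of allowing an arbitrarily slowly growing $p$ matters, since one cannot afford to lose anything beyond a power factor in the exponent.
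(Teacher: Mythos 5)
Your first two steps are sound (the Schwarz--Pick argument showing $g=f\circ\varphi^{-1}\in N$ is correct, if more than is needed), but the final reduction has a genuine gap, and it sits exactly at the point you flag as ``the only non-routine point.'' After passing to a sub-angle $\Delta_{\rho}\tilde\gamma$ on which $g$ is bounded and remapping it conformally onto $\mathbb{D}$, the boundary distances are related by $1-|w|\asymp c\,(1-|w'|)^{\alpha}$ with $\alpha<1$: the region $\Delta_{\rho}\tilde\gamma$ is a proper subdomain of $\mathbb{D}$ meeting $\Gamma$ at $e^{i\tilde\theta}$ with opening strictly less than $\pi$, so the distortion exponent $\alpha$ is strictly below $1$ (and the distortion is even worse, not of power type, if $\tilde\gamma$ is tangential). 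The transported majorant is then $p_1(t)=c^{-1}\,t^{1-\alpha}\,p(c\,t^{\alpha})$, and since $p\in P$ may increase arbitrarily slowly (say $p(t)=\log\log\log(1/t)$), the factor $t^{1-\alpha}\rightarrow 0$ can dominate and $p_1\notin P$. The stability of $P$ recorded before the lemma covers only substitutions $p\mapsto c_1p(c_2t^{\varepsilon})$, not multiplication by a positive power of $t$; your formula $p_1(t)=p(c_2t^{\alpha})/t^{\alpha-1}$ silently introduces exactly such a factor. This loss of a power is precisely the phenomenon that forces the strengthened hypothesis \eqref{EQ7}, with $(1-|z|)^{\frac{\pi}{2\alpha}}$ in the denominator, in Theorem~\ref{TH18}; it cannot be absorbed in the present lemma, where $p$ is arbitrary in $P$.

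The paper's proof avoids the second conformal map altogether. Because the hypothesis \eqref{EQ3} is already expressed in terms of $1-|\varphi(z)|$, the single function $F=f\circ\varphi^{-1}$ satisfies the \v{S}aginjan hypothesis on $\mathbb{D}$ verbatim, with the original $p$, along the curve $\varphi(\gamma)$. The only remaining points are that $F$ is analytic and bounded by $1$, and these come from $C(f,\overline{\Delta_{r_1}\gamma},e^{i\theta})=\{0\}$ (Theorem~\ref{TH15}, using the normality in $D(r)$ together with the asymptotic value $0$ forced by \eqref{EQ3}) plus a harmless normalization. Then \v{S}aginjan's theorem \cite{SAGINJAN} applies directly and gives $F\equiv 0$, hence $f\equiv 0$ on $\Delta_{r_1}\gamma$ and, by the uniqueness theorem for meromorphic functions, on all of $\mathbb{D}$. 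If you wish to keep your scheme, you must arrange to apply \cite{SAGINJAN} in the $\varphi$-coordinate itself rather than in a further re-uniformized coordinate.
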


\begin{proof}
From Theorem  \ref{TH15} and \eqref{EQ3}  it follows
\begin{equation*}
C(f, \overline{\Delta_{r_1}\gamma }, e^{i\theta}) =\{0\}.
\end{equation*}
Without lost of generality, we may assume that $|f(z )| <1$ for all $z\in
\Delta_{r_1}\gamma$. Let $\varphi:\Delta_{r_1}\gamma\rightarrow\mathbb{D}$
be a conformal mapping of $\Delta_{r_1}\gamma$ onto the  disk $\mathbb{D}$.
Then $F(\omega) = f\circ \varphi^{-1}(\omega ),\, \omega\in \mathbb{D}$ is
an  analytic        function which satisfies $|F (\omega)| <1,\, \omega\in
\mathbb{D}$. Since $z =\varphi^{-1}(\omega)$, from  \eqref{EQ3}  we obtain
\begin{equation*}
|f(\varphi^{-1}(\omega))|\le \exp\left\{-\frac{p(1-|\varphi(\varphi^{-1}(\omega))|)}{1-|\varphi(\varphi^{-1}(\omega))|}\right\},\quad \omega\in\gamma_1=\varphi(\gamma),
\end{equation*}
i.e.,
\begin{equation}\label{EQ4}
|F(\omega)|\le \exp\left\{-\frac{p(1-|\omega|)}{1-|\omega|}\right\},\quad \omega\in\gamma_1=\varphi(\gamma),
\end{equation}
where  $\gamma_1$ is a curve withe endpoint $e^{i\theta}$. From \eqref{EQ4}
and the  theorem of  \v{S}aginjan   we have  $ F(w)\equiv 0$
in $\mathbb{D}$.       This implies  $f (z)\equiv 0$ in  $\Delta_r \gamma
\subseteq\mathbb{D}$ and according to the classical theorem of uniqueness
for meromorphic functions     we conclude $f (z)\equiv 0$ in $\mathbb{D}$.
\end{proof}

For $0<\alpha < \frac \pi2$ denote
\begin{equation*}
A(e^{i\theta},\alpha,\rho,z) = \{ z\in\mathbb{D}:  | \arg{ (e^{i\theta} - z )}| <\alpha,\, |e^{i\theta} - z|<\rho \},
\end{equation*}
where
\begin{equation*}
\rho = \left\{
\begin{array}{ll}
1,             & \hbox{$0<\alpha\le \frac\pi3$},\\
2\cos\alpha, & \hbox{$\frac \pi3 < \alpha < \frac \pi2$}.
\end{array}
\right.
\end{equation*}
Then $A(e^{i\theta},\alpha,\rho,z)$  is the Stolz angle in $\mathbb{D}$ with
the vertex in   $e^{i\theta}$  and with  angle $2\alpha$. We have    $\alpha
\rightarrow \frac \pi2$  as  $\rho\rightarrow0$.

Furthermore, let  $[a,b]$  denote a segment       in the complex plane  with
endpoints $a,\,  b\in\mathbb{C}$.

Denote $\varphi_1(z) = -z,\, \varphi_2(z) = \rho^{-1} (1+z),\, \varphi_3(z)=
e^{i\alpha} z,\, \varphi_4(z) = z^{\frac \pi{2\alpha}}$,  let  $\varphi_5(z)
 =  \frac 12 (z+\frac 1z )$ be the function of Zhukovsky,  $\varphi_6(z)  =
z e^{-\pi i }$, and $\varphi_7(z) =\frac{z-i}{z+i}$. If $w=\varphi_\alpha(z)
=  \varphi _7\circ\cdots\circ \varphi_1(z)$, then      $\varphi_\alpha$ is a
conformal mapping of the Stolz angle $A(1,\alpha,\rho,z)$ onto $\mathbb{D}$,
and
\begin{equation*}
w=\varphi_\alpha (z) = 1-  \frac {4\rho ^{\frac \pi{2\alpha}} (1-z)^{\frac \pi{2\alpha}}}
{\left[(1-z)^{\frac \pi{2\alpha}}-\rho^{\frac \pi{2\alpha}}  \right]^2 +2\rho ^{\frac \pi{\alpha}} };
\end{equation*}
moreover, $\psi (A(e^{i\theta},\alpha,\rho,z))  = \mathbb{D},\, \psi(z)  =
\varphi_\alpha(e^{-i\theta}z)$.

In the sequel we will simply write $\varphi$ instead of $\varphi_\alpha$, where
$\alpha\in (0,\frac\pi2)$ is fixed. We have $\varphi ^{-1} (A(1,\beta, r,w))
\subseteq A(1,\alpha,\rho,z),\, \beta\in (0,\frac \pi2),\, r\in (0,1),\, \varphi^{-1}([-1,1])
= [1-\rho ,1],\, \varphi^{-1}(-1) = 1-\rho,\,  \varphi ^{-1} (1) = 1$.

We formulate our theorems for  Stolz angles  $A(1,\alpha,\rho,z)$.

\begin{lemma}\label{LE6}
For all fixed  $\alpha,\, \beta\in (0,\frac\pi2)$ there exist constants $m=
m(\alpha, \beta )>0$  and   $M = M (\alpha,\beta)>0$    such that  for all
$\omega\in A(1,\beta , \rho,w)$ we  have
\begin{equation}\label{EQ5}
m(1-|z|)^{\frac \pi{2\alpha}}\le 1-|\omega|\le M (1-|z|)^{\frac\pi{2\alpha}},
\end{equation}
where $z=\varphi^{-1}(\omega)$.
\end{lemma}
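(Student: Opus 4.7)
The plan is to convert the distance-to-$\Gamma$ quantities $1-|z|$ and $1-|\omega|$ into chordal distances to the common vertex $1$, and then to read off the required two-sided estimate directly from the explicit formula for $\varphi_\alpha$ given immediately before the lemma.

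First I would apply the standard Stolz-angle comparison that, inside a cone with vertex at a boundary point, $1-|w|$ is comparable to $|1-w|$ with constants depending only on the aperture. For $\omega\in A(1,\beta,\rho,w)$ this is immediate from the definition. For $z=\varphi_\alpha^{-1}(\omega)$ it is automatic as well, since $\varphi_\alpha$ is a conformal bijection of $A(1,\alpha,\rho,z)$ onto $\mathbb{D}$, so $z$ always lies in $A(1,\alpha,\rho,z)$. Consequently the lemma reduces to producing constants $m',\,M'>0$ depending only on $\alpha$ such that
\[
m'\,|1-z|^{\pi/(2\alpha)} \;\le\; |1-\omega| \;\le\; M'\,|1-z|^{\pi/(2\alpha)}.
\]

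For this, set $\mu:=\rho^{\pi/(2\alpha)}$ and $\zeta:=(1-z)^{\pi/(2\alpha)}$, so that $|\zeta|=|1-z|^{\pi/(2\alpha)}$. Factoring the quadratic in the denominator of the formula for $\varphi_\alpha$, one obtains
\[
1-\omega \;=\; \frac{4\mu\,\zeta}{(\zeta-\zeta_+)(\zeta-\zeta_-)},\qquad \zeta_\pm \;=\; \mu(1\pm i\sqrt{2}).
\]
The key elementary observation is that $z\in A(1,\alpha,\rho,z)$ forces $|1-z|\le\rho$, hence $|\zeta|\le\mu$, while $|\zeta_\pm|=\sqrt{3}\,\mu$; the triangle inequality then yields, uniformly in $\omega$,
\[
(\sqrt{3}-1)\mu \;\le\; |\zeta-\zeta_\pm| \;\le\; (\sqrt{3}+1)\mu.
\]
Multiplying these bounds and substituting into the formula for $1-\omega$ immediately produces the desired chordal comparison with $m'=4/[(\sqrt{3}+1)^2\mu]$ and $M'=4/[(\sqrt{3}-1)^2\mu]$.

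The only step requiring genuine attention is the Stolz-angle comparison $1-|w|\asymp |1-w|$: one must verify uniformity in the full cone $A(1,\gamma,\rho,\cdot)$ for the specific choice of $\rho$ made in the excerpt, which is a direct calculation using $|w|^2=1-2\,\mathrm{Re}\,(1-w)+|1-w|^2$ together with the bound on $|\arg(1-w)|$. Everything else is routine bookkeeping, once one notices that the poles $\zeta_\pm$ of the factored denominator---precisely the preimages of $\omega=\infty$ under the polynomial part of $\varphi_\alpha$---lie at distance $\sqrt{3}\,\mu$ from the origin, strictly larger than the maximal admissible $|\zeta|=\mu$, so that the denominator is automatically bounded above and below away from $0$ on the admissible range.
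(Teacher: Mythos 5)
Your argument follows the same skeleton as the paper's: write $1-\omega$ via the explicit formula for $\varphi_\alpha$, convert $1-|\cdot|$ into $|1-\cdot|$ by a Stolz--angle comparison on both the $z$-- and $\omega$--sides, and then bound the coefficient of $|1-z|^{\pi/(2\alpha)}$ from above and below. Where the paper bounds that coefficient by a soft argument --- continuity of $\phi$ on the compact set $\overline{\varphi^{-1}(A(1,\beta,\rho,w))}$, hence a positive minimum $c_1$ and a finite maximum $c_2$ --- you factor the denominator, locate its zeros at $\zeta_\pm=\mu(1\pm i\sqrt2)$ with $|\zeta_\pm|=\sqrt3\,\mu$, and note that the admissible range $|\zeta|\le\mu$ stays at distance at least $(\sqrt3-1)\mu$ from them. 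Your computations here are correct ($(\zeta-\mu)^2+2\mu^2=(\zeta-\zeta_+)(\zeta-\zeta_-)$, $|\zeta_\pm|=\sqrt3\,\mu$, the triangle--inequality bounds, and the resulting $m',M'$), and this is a genuine improvement: the paper never verifies that the denominator is zero--free on the compact closure (which is exactly what is needed for $c_2<\infty$), and its $\phi$ as printed still carries the factor $(1-z)^{\pi/(2\alpha)}$ in the numerator, so its minimum on the closure is $0$ rather than a positive $c_1$. Your explicit constants repair both defects.

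The one step you defer --- the uniform comparison $|1-\omega|\le C_\beta\,(1-|\omega|)$ on all of $A(1,\beta,\rho,w)$ --- is not as routine as you (or the paper, which asserts $|1-\omega|<\frac{2}{\cos\beta}(1-|\omega|)$ without proof) make it. Writing $\omega=1-re^{i\phi}$ gives $1-|\omega|^2=r\,(2\cos\phi-r)$, so $(1-|\omega|)/|1-\omega|\ge\tfrac12\,(2\cos\beta-\rho)$; this lower bound is positive, and the comparison uniform, only when $\rho<2\cos\beta$. With the paper's choice of $\rho$ this holds for $\beta<\pi/3$, but for $\beta\ge\pi/3$ the corners $1-\rho e^{\pm i\beta}$ of the truncated angle lie on $\Gamma$, the ratio degenerates there, and the left inequality in \eqref{EQ5} actually fails along sequences tending to a corner: there $1-|\omega|\to0$ while $z=\varphi^{-1}(\omega)$ tends (in general) to a boundary point of the $\alpha$--angle that is not on $\Gamma$, so $1-|z|$ stays bounded below. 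This is a defect of the lemma as stated rather than of your argument in particular --- the paper's own proof breaks at exactly the same place, and only the segment $[1-\rho,1]$ is used downstream in Theorem \ref{TH18} --- but if you keep the reduction to chordal distances you should either restrict to $\beta<\pi/3$ or shrink the truncation radius of the $\beta$--angle so that its closure meets $\Gamma$ only at the vertex.
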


\begin{proof}
Since  for all $z\in A(1,\alpha,\rho,z)$ and $\omega\in  A(1,\beta,\rho,w)$
we have
\begin{equation*}
1-\omega= \frac {4\rho ^{\frac \pi{2\alpha}} (1-z)^{\frac \pi{2\alpha}}}{ \left[(1-z)^{\frac\pi{2\alpha}}
-\rho ^{\frac \pi{2\alpha}}\right]^2+2\rho^{\frac\pi{\alpha}} }
\end{equation*}
and
\begin{equation*}
|1-\omega|<\frac{2}{\cos \beta} (1-|\omega|) = c(1-|\omega|),\quad c= \frac 2 {\cos \beta},
\end{equation*}
it follows that for all $\omega\in  A(1,\beta,\rho,w)$          and  $z =
\varphi^{-1}(\omega)$  we have
\begin{equation}\label{EQ6}
\begin{split}
&\left|\frac {4\rho ^{\frac{\pi}{2\alpha}} (1-z)^{\frac \pi{2\alpha}}}
{ \left[(1-z)^{\frac \pi{2\alpha}} -\rho ^{\frac \pi{2\alpha}}\right]^2+2\rho^{\frac \pi \alpha} }\right|
\frac{\cos\beta}2 (1-|z|)^{\frac \pi{2\alpha}}\le 1-|\omega|\le
\\&\left|\frac {4\rho ^{\frac \pi{2\alpha}} (1-z)^{\frac \pi{2\alpha}}}
{\left[(1-z)^{\frac \pi{2\alpha}} -\rho^{\frac \pi{2\alpha}}\right]^2+2\rho^{\frac \pi\alpha}}\right|
\left(\frac 2{\cos\alpha}\right)^{\frac \pi{2\alpha}} (1-|z|)^{\frac \pi{2\alpha}}.
\end{split}
\end{equation}
Since  $\alpha$   and $\beta$ are fixed,    $r=2\cos\alpha$ is also  fixed.
Since
\begin{equation*}
\phi(z)=\left|\frac {4\rho ^{\frac \pi{2\alpha}} (1-z)^{\frac \pi{2\alpha}}}{ \left[(1-z)^{\frac \pi{2\alpha}} -\rho ^{\frac\pi{2\alpha}}\right]^2+2\rho^{\frac\pi\alpha} }\right|
\end{equation*}
is a  continuous function  in  $A(1,\alpha,\rho,z)$, the function $f(z)$ on
the  compact  set $\overline{\varphi^{-1}(A(1,\beta,\rho,w))}$     achieves
its minimum and maximum; let $\phi_{\rm min}(z) = c_1 (\alpha,\beta)=c_1>0$
and $\phi_{\rm max} ( z) =  c _2(\alpha,\beta)=c_2<\infty$. From \eqref{EQ6}
we  obtain  that for all     $\omega\in     A(1,\beta,\rho,w)$    and  $z =
\varphi ^{-1} (\omega)$ holds
\begin{equation*}
m (1-|z|)^{\frac \pi{2\alpha}}\le 1-|\omega|\le M (1-|z|)^{\frac \pi{2\alpha}},
\end{equation*}
where we have denoted   $m=\frac {\cos \beta}2c_1>0$ and $M = \left(\frac 2
{\cos \alpha}\right)^{\frac\pi{2\alpha}}c_2<\infty$.
\end{proof}

\begin{theorem}\label{TH18}
Let  $\gamma$    be a simple        curve   in the disc $\mathbb{D}$ with one
endpoint  in $1$ which  is not tangent to  $\Gamma$.        Let $f:\mathbb{D}
\rightarrow \overline{\mathbb{C}}$        be a meromorphic function normal in
$\mathbb{D}$  along  the curve $\gamma$. If for all $\alpha\in (0,\frac\pi2)$
and  $p_1\in P$ holds
\begin{equation}\label{EQ7}
|f(z)|\le \exp\left\{ - \frac{p_1(1-|z|)}{(1-|z|)^{\frac\pi{2\alpha}}}  \right\},\quad z\in[\rho,1],
\end{equation}
where
\begin{equation*}
\rho = \left\{
\begin{array}{ll}
1,             & \hbox{$0<\alpha\le\frac \pi3$},\\
2\cos\alpha, & \hbox{$\frac\pi 3 < \alpha < \frac\pi2$},
\end{array}
\right.
\end{equation*}
then $f (z)\equiv 0$.
\end{theorem}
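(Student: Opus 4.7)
The plan is to adapt the reduction used for Lemma~\ref{LE5}: transplant $f$ conformally from a Stolz angle at $1$ onto $\mathbb{D}$ via $\varphi_\alpha$, use Lemma~\ref{LE6} to convert the decay on the axis of the Stolz angle to a Šaginjan-type decay on the real diameter of $\mathbb{D}$, and invoke Šaginjan's classical uniqueness theorem.

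First, since $\gamma$ is not tangent to $\Gamma$ at $1$, by the remark following Definition~\ref{DEF2} it belongs to the same equivalence class as the radius $\gamma_0$ ending at $1$; Theorem~\ref{TH7} then gives that $f$ is normal along $\gamma_0$. Fix $\alpha\in(\pi/3,\pi/2)$ and $p_1\in P$ for which \eqref{EQ7} holds; the segment $[\rho,1]=[2\cos\alpha,1]\subseteq\gamma_0$ is non-degenerate, and \eqref{EQ7} yields $\lim_{[\rho,1]\ni z\to 1}f(z)=0$, hence $\lim_{\gamma_0\ni z\to 1}f(z)=0$. By Theorem~\ref{TH14}, $0$ is the $\Delta_{\gamma_0}-$boundary value of $f$: $C(f,\Delta_r\gamma_0,1)=\{0\}$ for every $r\in(0,1)$. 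Since every Stolz angle with vertex at $1$ sits inside some $\Delta_r\gamma_0$, the function $f$ tends to $0$ uniformly within any such angle as $z\to 1$. Pick $\rho_1\in(0,1-2\cos\alpha]$ small enough that $|f|<1$ on $G:=A(1,\alpha,\rho_1,z)$; a meromorphic function bounded by $1$ has no poles, so $f$ is analytic on $G$.

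Let $\varphi\colon G\to\mathbb{D}$ be the conformal map $\varphi_\alpha$ with $\rho_1$ in place of $\rho$, and set $F:=f\circ\varphi^{-1}$, an analytic function on $\mathbb{D}$ with $|F|<1$. The axis $[1-\rho_1,1]$ of $G$ lies in $[\rho,1]$ by the choice of $\rho_1$, and $\varphi$ maps it onto $[-1,1]\subseteq\mathbb{D}$. Pick any $\beta\in(0,\alpha)$; then $[-1,1]\subseteq A(1,\beta,\rho_1,w)$ and Lemma~\ref{LE6} yields constants $m,M>0$ with
\begin{equation*}
m(1-|z|)^{\pi/(2\alpha)}\le 1-|\omega|\le M(1-|z|)^{\pi/(2\alpha)},\quad \omega\in[-1,1],\ z=\varphi^{-1}(\omega).
\end{equation*}
The lower estimate gives $1-|z|\le\bigl(m^{-1}(1-|\omega|)\bigr)^{2\alpha/\pi}$; combined with the monotonicity of $p_1\in P$ and the identity $(1-|z|)^{-\pi/(2\alpha)}\ge m(1-|\omega|)^{-1}$, the hypothesis \eqref{EQ7} on $[1-\rho_1,1]$ upgrades to
\begin{equation*}
|F(\omega)|\le \exp\left\{-\frac{q(1-|\omega|)}{1-|\omega|}\right\},\quad \omega\in[-1,1],
\end{equation*}
where $q(s):=m\cdot p_1\bigl(m^{-2\alpha/\pi}s^{2\alpha/\pi}\bigr)$ lies in $P$ by the observation preceding Lemma~\ref{LE5}. Šaginjan's theorem \cite{SAGINJAN} applied to $F$ along the radius $[0,1)\subseteq\mathbb{D}$ forces $F\equiv 0$; hence $f\equiv 0$ on $G$, and the classical uniqueness theorem for meromorphic functions gives $f\equiv 0$ on $\mathbb{D}$.

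The main technical step is the transfer of decay via Lemma~\ref{LE6}: one must chain its two-sided comparison with the monotonicity of $p_1\in P$ so that the bound for $F$ on $[-1,1]$ ends up in Šaginjan's exact form, with a new weight that still belongs to $P$.
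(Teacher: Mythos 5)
Your proof is correct and follows essentially the same route as the paper's: reduce to the radius via Theorem \ref{TH7}, obtain the asymptotic value $0$, transplant a Stolz angle onto $\mathbb{D}$ by $\varphi_\alpha$, convert the decay of \eqref{EQ7} into \v{S}aginjan's form using the lower bound of Lemma \ref{LE6} and the closure of $P$ under $t\mapsto c_1 p(c_2 t^\varepsilon)$, and conclude with \v{S}aginjan's uniqueness theorem. The only cosmetic difference is that you inline the transplantation-plus-\v{S}aginjan step, whereas the paper packages it as Lemma \ref{LE5} and reduces \eqref{EQ7} to that lemma's hypothesis via the inequality \eqref{EQ9}.
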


\begin{proof}
From     assumptions    of this theorem  and  Theorem \ref{TH7}  we obtain
that $f$ is normal  in $\mathbb{D}$ along the  radius  of the disc   which
terminates in $1$, i.e., $f$ is normal in     $\mathbb{D}$ along any curve
$[a,1],\, 0<a<1$. From \eqref{EQ7} we have that $0$ is an  angular boundary
value for  $f$.

Let
\begin{equation*}
w = \varphi (z) =
\frac {4\rho ^{\frac \pi{2\alpha}} (1-z)^{\frac \pi{2\alpha}}}{ \left[(1-z)^{\frac\pi{2\alpha}}
 -\rho ^{\frac\pi{2\alpha}}\right]^2+2\rho^{\frac\pi\alpha} }
\end{equation*}
be a conformal     mapping which maps the Stolz angle $A(1,\alpha,\rho,z)$
onto   the disk $\mathbb{D}$.

For any fixed   $\beta\in (0,\frac \pi2)$ and $r=1-\rho$ from  inequality
\eqref{EQ5} and Lemma  \ref{LE6} we obtain that  for all  $\omega\in A(1,\beta,\rho,w)$
holds
\begin{equation}\label{EQ8}
m(1- |z| )^{\frac \pi{2\alpha}}\le 1-|\omega|, \quad z=\varphi^{-1}(\omega).
\end{equation}
It follows that this  inequality holds for $\omega\in [1-\rho ,1]$    and
$z =\varphi ^{-1}(\omega)\in [\rho ,1].$  From \eqref{EQ8} we obtain
\begin{equation*}
p(1- |\varphi (z)| )\le p (m (1-|z|)^{\frac\pi{2\alpha}} )
\end{equation*}
for all $ w\in [1-\rho ,1],\, z =\varphi^{-1} (w)\in[\rho ,1]$ and $p\in P$.
Further, we have
\begin{equation*}
\frac{p(1-|\varphi(z)|)}{1-|\varphi(z)|}\le \frac{p(m(1-|z|)^{\frac \pi{2\alpha}})}{m(1-|z|)^{\frac\pi{2\alpha}}}.
\end{equation*}
Hence
\begin{equation}\label{EQ9}
-\frac{p(1-|\varphi(z)|)}{1-|\varphi(z)|}\ge -\frac {p_1(1-|z|)}{(1-|z|)^{\frac\pi{2\alpha}}}
\end{equation}
for $z\in\gamma_1   =   [\rho,1]$, where we have denoted $p_1(t)= m^{-1} p
(m t^{\frac \pi{2\alpha}})\in P$.  From  \eqref{EQ7}  and  \eqref{EQ9}  we
obtain
\begin{equation*}
|f(z)|\le \exp\left\{ - \frac {p(1-|\varphi(z)|)}{1-|\varphi(z)|} \right\},\quad z\in\gamma_1=[\rho,1].
\end{equation*}
From Lemma  \ref{LE5}  it follows $f(z)\equiv  0$.
\end{proof}

\begin{remark}\label{RE8}
From  the  proof of Theorem  \ref{TH18}  we see that this theorem holds if
instead of  normality in $\mathbb{D}$  of $f$   along   the curve $\gamma$
we have normality of  $f$ along $\gamma_1 = [a,1]$ in  $D_r$, where $r,\,a
\in (0,1)$. Then in the inequality \eqref{EQ7} for the angle $\alpha$   we
have $0<\alpha <\frac \pi2-\arctan \frac{1-r^2}{2r}$.     Namely, from the
preceding conditions    it follows that the function $f$, along the domain
which is bounded by two horo--cycles that contain $\pm 1$ and $\pm ri$ and
the circle $\{z : |z -1| = 1\}$ has a cluster set  which contains only $0$.
Since       for $0<\alpha<\frac\pi 2- \arctan \frac {1-r^2}{2r}$ the angle
$A(1,\alpha,\rho,z)$ is the subset of  this domain, we obtain $C(f,1, A(1,\alpha,\rho,z ))=\{0\}$,
hence a proof in this case goes in the same way as  the proof of   Theorem
\ref{TH18}.
\end{remark}

\begin{theorem}\label{TH19}
Let     $f:\mathbb{D}\rightarrow\overline{\mathbb{C}}$   be a meromorphic
function  normal in  $D(r)$,  where $r\in(0,1)$, along $\gamma_1 = [a,1],\,
0 < a <1$.  Let,  moreover,
\begin{equation*}
|f(z)| \le \exp \left\{ -\frac 1{(1-|z|)^k}\right\},\quad z\in[a,1],
\end{equation*}
where $k>\frac \pi {2\alpha},\, 0<\alpha<\frac\pi2-\arctan \frac{1-r^2}{2r}$.
Then $f\equiv 0$.
\end{theorem}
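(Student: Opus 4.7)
The plan is to reduce Theorem \ref{TH19} directly to Theorem \ref{TH18} via the extension described in Remark \ref{RE8}. The only work required is to rewrite the exponential bound so that it matches the hypothesis \eqref{EQ7}, and to produce an explicit element of the class $P$.

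First I would define the auxiliary function
\begin{equation*}
p_1(t) = t^{\frac{\pi}{2\alpha} - k},\quad t\in (0,b),
\end{equation*}
for some sufficiently small $b\in(0,1)$. Since the hypothesis $k>\frac{\pi}{2\alpha}$ means the exponent $\frac{\pi}{2\alpha}-k$ is strictly negative, one has $p_1(t)\uparrow +\infty$ as $t\rightarrow 0^+$, so $p_1\in P$.

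Next I would verify the bound required by Theorem \ref{TH18}. With $p_1$ as above,
\begin{equation*}
\frac{p_1(1-|z|)}{(1-|z|)^{\frac{\pi}{2\alpha}}}
= \frac{(1-|z|)^{\frac{\pi}{2\alpha}-k}}{(1-|z|)^{\frac{\pi}{2\alpha}}}
= \frac{1}{(1-|z|)^{k}},
\end{equation*}
so that the assumption
\begin{equation*}
|f(z)|\le \exp\left\{-\frac{1}{(1-|z|)^k}\right\},\quad z\in [a,1],
\end{equation*}
takes exactly the form
\begin{equation*}
|f(z)|\le \exp\left\{-\frac{p_1(1-|z|)}{(1-|z|)^{\frac{\pi}{2\alpha}}}\right\},\quad z\in\gamma_1 = [a,1],
\end{equation*}
which is inequality \eqref{EQ7} for this choice of $p_1$ and for the fixed $\alpha$.

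Finally, I would invoke Theorem \ref{TH18} together with Remark \ref{RE8}. The normality hypothesis of Theorem \ref{TH19}, namely that $f$ is normal in $D(r)$ along $\gamma_1=[a,1]$, is precisely the relaxed normality condition allowed by Remark \ref{RE8}, and the restriction $0<\alpha<\frac{\pi}{2}-\arctan\frac{1-r^2}{2r}$ is exactly the restriction the remark imposes so that the Stolz angle $A(1,\alpha,\rho,z)$ lies inside the region where the cluster set reduces to $\{0\}$. Consequently Theorem \ref{TH18} in the form of Remark \ref{RE8} applies and yields $f\equiv 0$. There is no real obstacle here: the argument is a direct translation of the hypothesis into the language of the class $P$, and the main content is already contained in Theorem \ref{TH18} (and ultimately in Lemma \ref{LE5} together with the conformal estimate of Lemma \ref{LE6}).
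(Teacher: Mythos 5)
Your proposal is correct and follows essentially the same route as the paper, which derives Theorem \ref{TH19} from Theorem \ref{TH18} (via Remark \ref{RE8}) using the behaviour of $t\mapsto t^{p}$ on $(0,1)$. Your version is in fact slightly more explicit than the paper's one-line proof: you exhibit the concrete function $p_1(t)=t^{\frac{\pi}{2\alpha}-k}\in P$ that turns the hypothesis into inequality \eqref{EQ7} exactly, and you correctly flag that the weakened normality hypothesis (normal only in $D(r)$ along $[a,1]$) is covered by Remark \ref{RE8} together with the stated restriction on $\alpha$.
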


Theorem \ref{TH19}     follows immediately from Theorem  \ref{TH18} and the
inequality  $t^q<t^p$, if $0 < t <1$  and $q < p$.

\begin{remark}\label{RE9}
Theorem     \ref{TH19} is actually the result of Gavrilov; see Theorem 2 in
\cite{GAVRILOV.SAGINJAN}.
\end{remark}

\begin{theorem}\label{TH20}
Let $\gamma$ be a curve in $\mathbb{ D}$ which terminates in $1$ and  which
is not tangent  to    $\Gamma$ in this point. Let $f :\mathbb{D}\rightarrow
\overline{\mathbb{C}}$ be a meromorphic     function normal  in $\mathbb{D}$
along $\gamma$. If for all integers $n\ge 1$ holds
\begin{equation}\label{EQ10}
|f(z)|\le  \exp\left \{-\frac 1 {(1-|z|)^{1+\frac 1n}} \right\},\quad z\in[a,1],
\end{equation}
then $f (z) \equiv 0$.
\end{theorem}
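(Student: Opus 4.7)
The plan is to reduce the theorem to Theorem \ref{TH19}, which is Gavrilov's uniqueness statement in the form obtained in this paper, by exploiting the freedom to slide the auxiliary parameter $r$ there arbitrarily close to $1$.

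First, I would transfer the normality hypothesis from $\gamma$ to the radius $\gamma_1 = [a,1]$. Since $\gamma$ is not tangent to $\Gamma$ at $1$, it belongs to the equivalence class of non-tangential curves ending at $1$, which also contains $[a,1]$; in particular $\gamma_1\sim\gamma$. By Theorem \ref{TH7}, the normality of $f$ in $\mathbb{D}$ along $\gamma$ yields normality in $\mathbb{D}$ along $\gamma_1$, and this obviously implies normality along $\gamma_1$ in $D(r)$ for every $r\in(0,1)$.

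Second, I would apply Theorem \ref{TH19} with exponent $k = 1+1/n$. The hypothesis of Theorem \ref{TH20} supplies exactly the required bound $|f(z)|\le \exp\{-1/(1-|z|)^{k}\}$ on $[a,1]$. It remains to produce $r\in(0,1)$ and $\alpha\in(0,\pi/2)$ satisfying both constraints $k>\pi/(2\alpha)$ and $\alpha<\pi/2-\arctan\frac{1-r^{2}}{2r}$. The first rewrites as $\alpha>\pi n/(2(n+1))$; since this lower bound is strictly less than $\pi/2$, I would first choose $r$ close enough to $1$ so that $\arctan\frac{1-r^{2}}{2r}<\pi/(2(n+1))$, and then pick any $\alpha$ inside the resulting non-empty interval $\bigl(\pi n/(2(n+1)),\,\pi/2-\arctan\frac{1-r^{2}}{2r}\bigr)$.

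Third, with these choices Theorem \ref{TH19} directly forces $f\equiv 0$. The only point that needs a moment of thought is the second step: because $f$ is normal in all of $\mathbb{D}$ and not merely in a fixed $D(r)$, the parameter $r$ can be pushed arbitrarily close to $1$, widening the allowed range of $\alpha$ and providing the margin $k>\pi/(2\alpha)$ for any exponent $k>1$, in particular for $k=1+1/n$ with each $n\ge 1$. No genuine obstacle arises beyond this; the estimate $1+1/n>1$ is precisely what makes the reduction to Theorem \ref{TH19} work.
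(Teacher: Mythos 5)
Your argument is correct and takes essentially the same route as the paper: both reduce the claim to the Stolz--angle uniqueness theorem by choosing the half--opening $\alpha$ so that the exponent $\pi/(2\alpha)$ falls below $1+1/n$. The paper applies Theorem \ref{TH18} directly with the exact choices $\alpha=\frac{n}{2n+1}\pi$ and $p_1(t)=t^{-1/(2n)}$, whereas you route through Theorem \ref{TH19} (itself an immediate corollary of Theorem \ref{TH18}), which costs you the extra but harmless steps of transferring normality to $[a,1]$ via Theorem \ref{TH7} and pushing $r$ toward $1$ to open up the admissible range of $\alpha$.
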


\begin{proof}
Since   $0<\alpha<\frac\pi2$   and  $p_1,\ p_2\in P$ are arbitrary in Theorem
\ref{TH18}, if we set $\alpha=\frac n{2n+1}\pi$ and $p_1(t) = t^{-\frac1{2n}}$,
where $n\ge 1$ is an integer,    inequality    \eqref{EQ7}  take  the form
\eqref{EQ10}; from Theorem  \ref{TH18} we have $f (z ) \equiv 0$.
\end{proof}

\begin{remark}\label{RE10}
According to the Archimedean  principe for  real numbers on     inequality
\eqref{EQ10}   and  Theorem \ref{TH20}  we deduce the mentioned  result of
Gavrilov      (Theorem 1 in \cite{GAVRILOV.SAGINJAN}), but for meromorphic
functions  normal  in the  disc $D$ along a curve  which is not tangent to
$\Gamma$.
\end{remark}

\end{document}